\newcommand{\al}{\alpha}
\newcommand{\alvec}{{\vec{\al}}} 
\newcommand{\alvecbe}{{\vec{\al}^\frown\be}} 
\newcommand{\alvecga}{{\vec{\al}^\frown\ga}} 
\newcommand{\alvecpr}{{\vec{\al}'}}
\newcommand{\alvecpl}{{\vec{\al}^+}}
\newcommand{\alpr}{{\alpha^\prime}}
\newcommand{\ale}{{\al_1}}
\newcommand{\ali}{{\al_i}}
\newcommand{\alimin}{{\al_{i-1}}}
\newcommand{\aln}{{\al_n}}
\newcommand{\alne}{{\al_{n+1}}}
\newcommand{\be}{\beta}
\newcommand{\beplus}{{\be^+}}
\newcommand{\bevec}{{\vec{\be}}}
\newcommand{\bevecpl}{{\vec{\be}^+}}
\newcommand{\bevecpr}{{\vec{\be}'}}
\newcommand{\bebar}{\bar{\be}}
\newcommand{\bepr}{{\beta^\prime}}
\newcommand{\ga}{\gamma}
\newcommand{\gahat}{{\widehat{\ga}}}
\newcommand{\gapr}{{\ga^\prime}}
\newcommand{\gabar}{\bar{\ga}}
\newcommand{\gavec}{{\vec{\ga}}}
\newcommand{\gavecpr}{{\vec{\ga}'}}
\newcommand{\de}{\delta}
\newcommand{\devec}{{\vec{\de}}}
\newcommand{\devecpr}{{\vec{\de}^\prime}}
\newcommand{\De}{\Delta}
\newcommand{\eps}{\varepsilon}
\newcommand{\epsn}{\varepsilon_0}
\newcommand{\la}{\lambda}
\newcommand{\ka}{\kappa}
\newcommand{\om}{\omega}
\newcommand{\Om}{\Omega}
\newcommand{\nupr}{{\nu^\prime}}
\newcommand{\sivec}{{\vec{\si}}}
\newcommand{\sipr}{\si^\prime}
\newcommand{\tauti}{\tilde{\tau}}
\newcommand{\tauvec}{{\vec{\tau}}}
\newcommand{\taupr}{{\tau^\prime}}
\newcommand{\taubar}{\bar{\tau}}
\newcommand{\taubarpr}{\bar{\tau}^\prime}
\newcommand{\tautipr}{{\tauti^\prime}}
\newcommand{\taunpr}{{\tau^\prime_n}}
\newcommand{\taui}{{\tau_i}}
\newcommand{\tauie}{{\tau_{i+1}}}
\newcommand{\taunstar}{{\tau_n^\star}}
\newcommand{\taurstar}{{\tau_r^\star}}
\newcommand{\taunestar}{{\tau_{n+1}^\star}}
\newcommand{\si}{\sigma}
\newcommand{\tht}{\vartheta}
\newcommand{\xipr}{{\xi^\prime}}
\newcommand{\xivec}{{\vec{\xi}}}
\newcommand{\xivecpr}{{\vec{\xi}'}}
\newcommand{\ze}{\zeta}
\newcommand{\zevec}{{\vec{\ze}}}
\newcommand{\etapr}{{\eta^\prime}}
\newcommand{\etavec}{{\vec{\eta}}}
\renewcommand{\phi}{\varphi}
\newcommand{\N}{{\mathbb N}}
\newcommand{\Hz}{{\mathbb P}}
\newcommand{\Ez}{{\mathbb E}}
\newcommand{\Ezone}{{\mathbb E}_1}
\newcommand{\On}{{\mathrm{Ord}}}
\newcommand{\ANF}{{\mathrm{\scriptscriptstyle{ANF}}}}
\newcommand{\NF}{{\mathrm{\scriptscriptstyle{NF}}}}
\newcommand{\Lim}{\mathrm{Lim}}
\newcommand{\Image}{\mathrm{Im}}
\newcommand{\finsub}{\subseteq_\mathrm{fin}}
\newcommand{\logend}{{\mathrm{logend}}}
\newcommand{\sumend}{{\mathrm{end}}}
\newcommand{\leo}{\le_1}
\newcommand{\lo}{<_1}
\newcommand{\klex}{<_\mathrm{\scriptscriptstyle{lex}}}
\newcommand{\kglex}{\le_\mathrm{\scriptscriptstyle{lex}}}
\newcommand{\lepw}{\le_\mathrm{\scriptscriptstyle{pw}}}
\newcommand{\thtnod}{\tht_0}
\newcommand{\thte}{\tht_1}
\newcommand{\thti}{\tht_i}
\newcommand{\thtt}{\tht^\tau}
\newcommand{\thtti}{\tht^\taui}
\newcommand{\thttj}{\tht^{\tau_j}}
\newcommand{\thtal}{\tht^\al}
\newcommand{\T}{{\operatorname{T}}}
\newcommand{\Targ}[1]{{\operatorname{T}^{#1}}}
\newcommand{\Tt}{{\operatorname{T}^\tau}}
\newcommand{\Ttn}{{\operatorname{T}^{\tau_n}}}
\newcommand{\Tti}{{\operatorname{T}^{\taui}}}
\newcommand{\Ttvec}{{\operatorname{T}^\tauvec}}
\newcommand{\Tsivec}{{\operatorname{T}^\sivec}}
\newcommand{\ltvec}{{\operatorname{l}^\tauvec}}
\newcommand{\lsivec}{{\operatorname{l}^\sivec}}
\newcommand{\ltvecbe}{{\operatorname{l}^{\tauvec^\frown\be}}}
\newcommand{\Tal}{{\operatorname{T}^\al}}
\newcommand{\htarg}[1]{\operatorname{ht}_{#1}}
\newcommand{\hte}{{\operatorname{ht}_1}}
\newcommand{\ttau}{{\operatorname{t}^\al_\tau}}
\newcommand{\lh}{{\operatorname{lh}}}
\newcommand{\gs}{{\operatorname{gs}}}
\newcommand{\kvalbe}{\kappa^\alvec_\be}
\newcommand{\laga}{{\la_\ga}}
\newcommand{\laaln}{{\la_{\aln}}}
\newcommand{\lat}{{\la_\tau}}
\newcommand{\iotal}{\iota_{\tau,\al}}
\newcommand{\iotial}{\iota_{\tau_i,\al}}
\newcommand{\zetal}{{\ze^\tau_\al}}
\newcommand{\zetbe}{{\ze^\tau_\be}}
\newcommand{\lasi}{{\la^\si}}
\newcommand{\latal}{{\la^\tau_\al}}
\newcommand{\latbe}{{\la^\tau_\be}}
\newcommand{\piarg}[1]{{\pi_{#1}}}
\newcommand{\piarginv}[1]{{\pi^{-1}_{#1}}}
\newcommand{\pist}{\pi_{\si,\tau}}
\newcommand{\pivec}{\vec{\pi}}
\newcommand{\pivecind}{\vec{\pi}\mbox{-idx}}
\newcommand{\kavec}{\vec{\ka}}
\newcommand{\kavecind}{\vec{\ka}\mbox{-idx}}
\newcommand{\bfpiarg}[1]{{\vec{\pi}_{#1}}}
\newcommand{\bfkaarg}[1]{{\vec{\ka}_{#1}}}
\newcommand{\pistinv}{\pi^{-1}_{\si,\tau}}
\newcommand{\Part}{\operatorname{Par}^\tau}
\newcommand{\kpom}{{\operatorname{KP}\!\om}}
\newcommand{\kplnod}{{\operatorname{KP}\!\ell_0}}
\newcommand{\Rtwo}{{{\cal R}_2}}
\newcommand{\Ctwo}{{{\cal C}_2}}
\newcommand{\Ronepl}{{{\cal R}_1^+}}
\newcommand{\Rtwopl}{{{\cal R}_2^+}}
\newcommand{\Core}{{\operatorname{Core}}}
\newcommand{\id}{\operatorname{id}}
\newcommand{\bardot}{\bar{\cdot}}
\newcommand{\aeq}{\Leftrightarrow}
\newcommand{\andsp}{\:\&\:}
\newcommand{\sub}{\subseteq}
\newcommand{\propersub}{\subsetneq}
\newlength{\hilflh}
\newcommand{\hilfminus}[1]{
  \settowidth{\hilflh}{$#1-$}\mbox{$#1-\hspace{-0.5\hilflh}
  \makebox[0pt]{\raisebox{0.24\hilflh}{$#1\cdot$}}\hspace{0.5\hilflh}$}}
\newcommand{\minusp}{\mathbin{\mathchoice {\hilfminus{\displaystyle}}
  {\hilfminus{\textstyle}}{\hilfminus{\scriptstyle}}
  {\hilfminus{\scriptscriptstyle}}}}
\newtheorem{theo}{Theorem}[section]
\newtheorem{claim}[theo]{Claim}
\newtheorem{cor}[theo]{Corollary}
\newtheorem{lem}[theo]{Lemma}
\newproof{proof}{\bf Proof}
\newproof{firstclaimproof}{\bf Proof of Claim \ref{mainlinecovclaim}}
\newproof{secondclaimproof}{\bf Proof of Claim \ref{extensionpartclaim}}
\newdefinition{defi}[theo]{Definition}
\newdefinition{prop}[theo]{Proposition}
\newdefinition{rmk}[theo]{Remark}
\newcommand{\oneinf}{1^\infty}
\newcommand{\tauinf}{\tau^\infty}
\newcommand{\alinf}{\al^\infty}
\newcommand{\chis}{\chi^\si}
\newcommand{\chit}{\chi^\tau}
\newcommand{\chial}{\chi^\al}
\newcommand{\chiga}{\chi^\ga}
\newcommand{\mutal}{\mu^\tau_\al}
\newcommand{\mualn}{\mu_\aln}
\newcommand{\mutali}{\mu^\tau_\ali}
\newcommand{\muga}{\mu_\ga}
\newcommand{\rhoalbe}{{\varrho^\al_\be}}
\newcommand{\MNF}{{\mathrm{\scriptscriptstyle{MNF}}}}
\newcommand{\Mz}{{\mathbb M}}
\newcommand{\trs}{{\mathrm{ts}}}
\newcommand{\trst}{{\mathrm{ts}^\tau}}
\newcommand{\trsaln}{{\mathrm{ts}^\aln}}
\newcommand{\cs}{{\mathrm{cs}}}
\newcommand{\cspr}{{\mathrm{cs}^\prime}}
\newcommand{\tc}{{\mathrm{tc}}}
\newcommand{\TC}{{\mathrm{TC}}}
\newcommand{\RS}{\mathrm{RS}}
\newcommand{\letwo}{\le_2}
\newcommand{\ktwo}{<_2}
\newcommand{\lSeq}{\mathrm{lSeq}}
\newcommand{\dom}{\mathrm{dom}}
\newcommand{\dpf}{\mathrm{dp}}
\newcommand{\nuval}{\nu^\alvec}
\newcommand{\nuvalbe}{\nu^\alvec_\be}
\newcommand{\alcp}[1]{{\al_{#1}}}
\newcommand{\becp}[1]{{\be_{#1}}}
\newcommand{\gacp}[1]{{\ga_{#1}}}
\newcommand{\decp}[1]{{\de_{#1}}}
\newcommand{\sicp}[1]{{\si_{#1}}}
\newcommand{\taucp}[1]{{\tau_{#1}}}
\newcommand{\taucppr}[1]{{\tau^\prime_{#1}}}
\newcommand{\xicp}[1]{{\xi_{#1}}}
\newcommand{\tauticp}[1]{{\tauti_{#1}}}
\newcommand{\rhoargs}[2]{\varrho^{#1}_{#2}}
\newcommand{\ov}{\mathrm{o}}
\newcommand{\ordcp}[1]{{\mathrm{o}_{#1}}}
\newcommand{\ktc}{<_\mathrm{TC}}
\newcommand{\letc}{\le_\mathrm{TC}}
\newcommand{\cml}{\operatorname{cml}}
\newcommand{\gbo}{\operatorname{gbo}}
\newcommand{\predec}{\operatorname{pred}}
\newcommand{\predecs}{\operatorname{Pred}}
\newcommand{\TS}{\operatorname{TS}}
\newcommand{\mts}{\operatorname{mts}}
\newcommand{\mtsal}{\operatorname{mts}^\al}
\newcommand{\mtsga}{\operatorname{mts}^\ga}
\newcommand{\hop}{\operatorname{h}}
\newcommand{\hbe}{\operatorname{h}_\be}
\newcommand{\sk}{\operatorname{sk}}
\newcommand{\skbe}{\operatorname{sk}_\be}
\newcommand{\ec}{\operatorname{ec}}
\newcommand{\me}{\operatorname{me}}
\newcommand{\mepl}{\operatorname{me}^+}
\newcommand{\param}{\operatorname{par}}
\newcommand{\maxparam}{\operatorname{mp}}
\newcommand{\parind}{\operatorname{pi}}
\newcommand{\db}{\operatorname{db}}
\newcommand{\dc}{\operatorname{dc}}
\newcommand{\ds}{\operatorname{ds}}
\newcommand{\mq}{{\operatorname{mq}}}
\newcommand{\mqtvec}{{\operatorname{mq}^\tauvec}}
\newcommand{\Mcl}{M^{\operatorname{cl}}}
\newcommand{\Mecl}{M^{\operatorname{ecl}}}
\newcommand{\Mpr}{M^\prime}
\newcommand{\bp}{\operatorname{bp}}
\newcommand{\vc}{\operatorname{vc}}
\newcommand{\vcal}{\vc_\al}
\newcommand{\vcalpr}{\vc_{\alpr}}
\def\vec#1{\mathchoice{\mbox{\boldmath$\displaystyle#1$}}
{\mbox{\boldmath$\textstyle#1$}}
{\mbox{\boldmath$\scriptstyle#1$}}
{\mbox{\boldmath$\scriptscriptstyle#1$}}}
\newcommand{\bs}{{\mathrm{bs}}}
\newcommand{\bspr}{\mathrm{bs}^\prime}
\journal{Annals of Pure and Applied Logic}
\begin{document}

\begin{frontmatter}

\title{Pure patterns of order 2}

\author{Gunnar Wilken}

\ead{wilken@oist.jp}

\address{Structural Cellular Biology Unit\\
Okinawa Institute of Science and Technology\\
1919-1 Tancha, Onna-son, 904-0495 Okinawa, Japan}

\begin{abstract}
We provide mutual elementary recursive order isomorphisms between classical ordinal notations, based on Skolem hulling, and
notations from pure elementary patterns of resemblance of order $2$,  
showing that the latter characterize the proof-theoretic ordinal $\oneinf$\index{$1$@$\oneinf$} of the fragment $\Pi^1_1$-$\mathrm{CA}_0$ 
of second order number theory, or equivalently the set theory $\kplnod$. 
As a corollary, we prove that Carlson's result on the well-quasi orderedness of respecting forests of order $2$ 
implies transfinite induction up to the ordinal $\oneinf$.
We expect that our approach will facilitate analysis of more powerful systems of patterns.
\end{abstract}

\begin{keyword}
Proof theory \sep Ordinal notations \sep Independence \sep Patterns of resemblance \sep Elementary substructures 
\MSC[2010] 03F15 \sep 03E35 \sep 03E10 \sep 03C13 
\end{keyword}

\end{frontmatter}

\section{Introduction}
Elementary patterns of resemblance were discovered and then systematically introduced by Timothy J.\ Carlson, \cite{C00,C01,C09}, 
as an alternative approach to recursive systems of ordinal notations. Elementary patterns constitute the basic levels of Carlson's 
programmatic approach, {\it patterns of embeddings}, which is inspired by G\"odel's program of using large cardinals to solve mathematical incompleteness, see e.g.\ \cite{F96,K09}. It follows heuristics that axioms of infinity are
in close correspondence with ordinal notations. The long-term goal of patterns of embeddings is therefore to find an ultra-finestructure for large cardinal axioms based on embeddings, thereby ultimately complementing inner model theory.

Patterns of resemblance, which instead of involving codings of embeddings, rely upon binary relations coding the property of elementary 
substructure of increasing complexity, are first steps to investigate patterns.   
Inspired by the notion of elementary substructure along ordinals as set-theoretic objects, ordinal notations
in terms of elementary patterns intrinsically carry semantic content. However, Carlson made the intriguing observation that
patterns have simple, finitely combinatorial characterizations called \emph{respecting forests}.  

The present article focuses on elementary patterns of order $2$. Recalling from the introduction to \cite{W}, let 
$\Rtwo=\left(\On;\le,\leo,\letwo\right)$\index{$\r$@$\Rtwo$}\index{$\le_i$} be the structure of ordinals 
with standard linear ordering $\le$ and partial orderings $\le_1$ and $\le_2$, simultaneously defined by induction on $\be$ in  
\[\al\le_i\be:\aeq \left(\al;\le,\le_1,\le_2\right) \preceq_{\Sigma_i} \left(\be;\le,\le_1,\le_2\right)\]
where $\preceq_{\Sigma_i}$ is the usual notion of $\Sigma_i$-elementary substructure (without bounded quantification), see \cite{C99,C01}
for fundamentals and groundwork on elementary patterns of resemblance.
Pure patterns of order $2$\index{patterns, pure patterns} are the finite isomorphism types of $\Rtwo$. The \emph{core}\index{core}
of $\Rtwo$ consists of the union of \emph{isominimal realizations}\index{isominimal realization} of these patterns within $\Rtwo$, where a finite 
substructure of $\Rtwo$ is called isominimal, if it is pointwise minimal (with respect to increasing enumerations) 
among all substructures of $\Rtwo$ isomorphic to it, and where an isominimal substructure of $\Rtwo$ realizes a pattern $P$, if
it is isomorphic to $P$. It is a basic observation, cf.\ \cite{C01}, that the class of pure patterns of order $2$ is 
contained in the class $\mathcal{RF}_2$ of \emph{respecting forests of order $2$}:\index{respecting forest}
finite structures $P$ over the language $(\le_0,\leo,\letwo)$ where $\le_0$ is a linear ordering and $\leo,\letwo$ are forests such that 
$\letwo\subseteq\leo\subseteq\le_0$ and $\le_{i+1}$ \emph{respects} $\le_i$, i.e.\ $p\le_i q\le_i r\andsp p\le_{i+1}r$ implies $p\le_{i+1}q$
for all $p,q,r\in P$, for $i=0,1$.  

In \cite{CWc} we showed that every pattern has a cover below $1^\infty$, the least such ordinal.
As outlined in \cite{W}, an order isomorphism (embedding) is a cover (covering, respectively)\index{covering,cover} 
if it maintains the relations $\le_1$ and $\le_2$.
The ordinal of $\kplnod$, which axiomatizes limits of models of Kripke-Platek set theory with infinity, 
is therefore least such that there exist arbitrarily long finite $\le_2$-chains.
Moreover, by determination of enumeration functions of (relativized) connectivity components of $\le_1$ and $\le_2$, we
were able to describe these relations in terms of classical ordinal notations. The central observation in connection with
this is that every ordinal below $1^\infty$ is the greatest element in a $\le_1$-chain in which $\le_1$- and $\le_2$-chains alternate,
thus providing a formalism that allows precise localization of ordinals in terms of relativized connectivity components of the
relations $\leo$ and $\letwo$.
We called such chains \emph{tracking chains}, as they provide all $\le_2$-predecessors and the greatest $\le_1$-predecessors
insofar as they exist. 

In \cite{W} we showed that the arithmetical characterization of the structure $\Rtwo$ up to the ordinal $\oneinf$, which we denoted
as $\Ctwo$\index{$\Ctwo$}, is an elementary recursive structure. This guarantees the elementary recursiveness of the order isomorphisms between
hull and pattern notations given here.

From these preparations we devise here an algorithm that assigns an isominimal realization within $\Ctwo$ to each respecting forest
of order $2$, thereby showing that each such respecting forest is in fact (up to isomorphism)
a pure pattern of order $2$. It turns out that isominimal realizations are pointwise minimal among all covers of the given forest.
We therefore derive a method that calculates ordinals coded in pattern notations in terms of familiar hull notations, see \cite{W07a}.

The notion of closure introduced here further allows us to provide pattern notations for finite sets of ordinals below $\oneinf$.
We are going to define an elementary recursive function that assigns describing patterns $P(\al)$ to
ordinals $\al\in 1^\infty$. Recalling again from \cite{W}, a descriptive pattern\index{patterns, pure patterns!descriptive patterns} 
for an ordinal $\al$ is a pattern, the isominimal realization of which contains $\al$. 
Descriptive patterns are given in a way that makes a canonical choice for normal
forms, since in contrast to the situation in $\Ronepl$, cf.\ \cite{W07c,CWa}, there is no unique notion of normal form in $\Rtwo$.
The chosen normal forms are of least possible cardinality.

The mutual order isomorphisms between hull and pattern notations in the present article enable classification of a new independence
result for $\kplnod$, as was already announced \cite{W}. 
We demonstrate that Carlson's result in \cite{C16}, according to which the collection of respecting forests of order $2$ is well-quasi-ordered with 
respect to coverings, cannot be proven in $\kplnod$ or, equivalently, in the restriction $\Pi^1_1\mathrm{-CA}_0$ of second order number theory to 
$\Pi^1_1$-comprehension and set induction. On the other hand,
we know that transfinite induction up to the ordinal $1^\infty$ of $\kplnod$ suffices to show that every pattern is covered \cite{CWc}.

\section{Preliminaries}\label{prelimsec}
For a general introduction to proof theory and ordinal notation systems, see Pohlers \cite{P09}. Classical notations 
based on Skolem hulling \cite{P09} that are used here (relativized notation systems $\Tt$\index{$\Tt$}, 
collapsing functions $\thtt$,$\thti$\index{$\thtt$,$\thti$})  
were provided in \cite{W07a} together with structural insights particularly useful
in analysis of patterns of resemblance, first demonstrated in \cite{W07b}.
\cite{W07a} introduces frequently used ordinal measures, e.g.\ $\htarg{}$\index{$\htarg{}$}, 
transformations, e.g.\ $\iotal$\index{$\iotal$}, $\pist$\index{$\pist$}, 
and arithmetical operations $\bardot$\index{$\bardot$}, $\zetal$\index{$\zetal$}, $\latal$\index{$\latal$}.
A summary of this toolkit can be found in \cite{W07c}, where the core of the structure
$\Ronepl$\index{$\r$@$\Ronepl$} was analyzed. This was further enhanced in Sections 5 and 6 of \cite{CWa}. 

This article builds upon the results, arithmetical tools, and terminology of \cite{CWc} and \cite{W}.
For ordinal arithmetical functions and operators specific to the analysis of patterns of order $2$ such as $\chit$\index{$\chit$}, 
$\rhoargs{\tau}{\al}$\index{$\rhoargs{\tau}{\al}$}, $\mutal$\index{$\mutal$}, $\widehat{\cdot}$\index{$\widehat{\cdot}$}
see Section 3 of \cite{CWc}.
The central notion is that of \emph{tracking chains},\index{tracking chains, $\tc$, $\TC$} introduced in Definitions 5.1, 5.2, and 6.1 of \cite{CWc}, 
and thoroughly explained and analyzed in Section 5 of \cite{W}. It provides a detailed description of the relations $\leo$ and $\letwo$ 
in terms of (relativized) connectivity components, thereby providing ``addresses'' for the ordinals below $\oneinf$ in terms of 
nested components of $\le_i$, $i=1,2$.
Corollary 5.8 of \cite{W}, here \ref{elemreccharcor}, summarizes the arithmetical, and even syntactic, 
characterization of the semantic relations $\le_i$, coding $\Sigma_i$-elementarity within $\Rtwo$, up to $\oneinf$.
Notions of closedness and closure introduced in the present article build upon the notion of (relativized) spanning sets of 
tracking chains, introduced in Definitions 5.1, 5.2, and 5.3 of \cite{W}.

For the reader's convenience we give a review of the central notions and results from \cite{W} and provide an index. Notions and abbreviations
from \cite{CWc} that are not explained here can be quickly accessed through the index of \cite{CWc}, and similarly for more basic 
notions from \cite{W07a}.

\subsection{Tracking sequences and connectivity components}\label{tsccsec}
To begin, recall the notion of localization\index{localization} from Section 4 of \cite{W07a} (Definition 4.6). 
Accompanying the notion of \emph{tracking sequence}\index{tracking sequence, $\trs$, $\TS$}, see Definitions 3.13 and 4.2
\footnote{Note that in Definition 4.2 of \cite{CWc} the restriction $\aln\ge\tau$ was not meant to be applied in case of $n>1$.} 
of \cite{CWc}, we have
\begin{defi}[corrected 3.6 of \cite{W}]\label{mts}\index{$\mts$}
Let $\tau\in\Ezone$, $\al\in\Ez\cap(\tau,\tauinf)$, $\be\in\Hz\cap[\al,\alinf)$, and let $\al=\al_0,\ldots,\al_{n}=\be$ be the 
$\al$-localization of $\be$.
If there exists the least index $i$ such that $0\le i<n$ and $\ali<\be\le\mutali$, then 
\[\mtsal(\be):=\mtsal(\ali)^\frown(\be),\]
otherwise $\mtsal(\be):=(\al)$.
\end{defi}
This notion has been discussed in Subsection 3.1 of \cite{W} and, together with Definitions 4.3\index{reference sequence, $\RS$} 
and 4.9 of \cite{CWc},\index{$\sk$} gives rise to the following
\begin{defi}[3.11 of \cite{W}]\label{hgamalbe}\index{$\hbe$}
Let $\alvec^\frown\ga\in\RS$ and $\be\in\Mz$.
\begin{enumerate}
\item If $\be\in(\ga,\gahat)$, let $\mtsga(\be)=\etavec^\frown(\eps,\be)$ and define
$\hbe(\alvecga):=\alvec^\frown\etavec^\frown\skbe(\eps)$.
\item If $\be\in(1,\ga]$ and $\be\le\muga$ then 
$\hbe(\alvecga):=\alvec^\frown\skbe(\ga)$.
\item If $\be\in(1,\ga]$ and $\be>\muga$ then 
$\hbe(\alvecga):=\alvec^\frown\ga$.
\end{enumerate}
\end{defi}
With this preparation at hand, the crucial definition of Section 3 of \cite{W} reads 
\begin{defi}[3.14 of \cite{W}]\label{odef}
Let $\alvecbe\in\TS$, where $\alvec=(\ale,\ldots,\aln)$, $n\ge 0$, $\be=_\MNF\be_1\cdot\ldots\cdot\be_k$,
and set $\al_0:=1$, $\alne:=\be$, $h:=\hte(\ale)+1$, and
$\gavec_i:=\trs^{\alimin}(\ali)$, $i=1,\ldots,n$,
\[\gavec_{n+1}:=\left\{\begin{array}{ll}
            (\be)&\mbox{if }\be\le\aln\\[2mm]
            \trsaln(\be_1)^\frown\be_2&\mbox{if } k>1,\be_1\in\Ez^{>\aln}\andsp\be_2\le\mu_{\be_1}\\[2mm]
            \trsaln(\be_1)&\mbox{otherwise,}
            \end{array}\right.\]
and write $\gavec_i=(\ga_{i,1},\ldots,\ga_{i,m_i})$, $i=1,\ldots,n+1$.
We then define the set\index{$\lSeq$}
\[\lSeq(\alvecbe):=(m_1,\ldots,m_{n+1})\in[h]^{\le h}\]
of sequences of natural numbers $\le h$ of length at most $h$, ordered lexicographically. 
Let $\bepr:=1$ if $k=1$ and $\bepr:=\be_2\cdot\ldots\cdot\be_k$ otherwise. 
We define $\ov(\alvecbe)$\index{$\ov$} recursively in $\lSeq(\alvecbe)$, as well as auxiliary parameters 
$n_0(\alvecbe)$ and $\ga(\alvecbe)$, which are set to $0$ where not defined explicitly.
\begin{enumerate}
\item $\ov((1)):=1$.
\item If $n>1$ and $\be_1\le\aln$, then $\ov(\alvecbe):=_\NF\ov(\alvec)\cdot\be$.
\item If $\be_1\in\Ez^{>\aln}$, $k>1$, and $\be_2\le\mu_{\be_1}$, then
set $n_0(\alvecbe):=n+1$, $\ga(\alvecbe):=\be_1$, and define
\[\ov(\alvecbe):=_\NF\ov(\hop_{\be_2}(\alvec^\frown\be_1))\cdot\bepr.\] 
\item Otherwise. Then setting
\[n_0:=n_0(\alvecbe):=\max\left(\{i\in\{1,\ldots,n+1\}\mid m_i>1\}\cup\{0\}\right),\]
define 
\[\ov(\alvecbe):=_\NF\left\{\begin{array}{ll}
            \be&\mbox{if } n_0=0\\[2mm]
            \ov(\hop_{\be_1}({\alvec_{\restriction_{n_0-1}}}^\frown\ga))\cdot\be&\mbox{if } n_0>0,
            \end{array}\right.\] 
where $\ga:=\ga(\alvecbe):=\ga_{n_0,m_{n_0}-1}$.
\end{enumerate}
\end{defi}

\begin{rmk}[\cite{W}]  As indicated in writing $=_\NF$ in the above definition, we obtain terms in multiplicative normal form
denoting the values of $\ov$. The {\bf fixed points of $\ov$}, i.e.\ those $\alvecbe$ that satisfy $\ov(\alvecbe)=\be$ are therefore
characterized by 1.\ and 4.\ for $n_0=0$.
\end{rmk}

Once having noticed that the proof of Lemma 3.15 of \cite{CWc} actually proceeds by induction along the inductive definition of $\trst$, hence
along term decomposition, we can now, in an elementary recursive fashion, characterize the enumeration functions of relativized connectivity 
components introduced in Section 4 of \cite{CWc} (Definition 4.4), as carried out in detail in Section 4 of \cite{W}.  
\begin{defi}[4.1 of \cite{W}]\label{kappanuprincipals}
Let $\alvec\in\RS$ where $\alvec=(\ale,\ldots,\aln)$, $n\ge 0$, $\al_0:=1$.
We define $\kvalbe$\index{$\kvalbe$} and $\nuvalbe$\index{$\nuvalbe$} for certain additive principal $\be$ as follows, writing $\kappa_\be$ 
instead of $\kappa^{()}_\be$.\\[2mm]
{\bf Case 1:} $n=0$. For $\be<\oneinf$ define \[\kappa_\be:=\ov((\be)).\]
{\bf Case 2:} $n>0$. For $\be\le \mualn$, i.e.\ $\alvec^\frown\be\in\TS$, define \[\nu^\alvec_\be:=\ov(\alvecbe).\]
$\kvalbe$ for $\be\le \laaln$ is defined by cases. If $\be\le\aln$ let $i\in\{0,\ldots,n-1\}$ be maximal such that $\ali<\be$.
If $\be>\aln$ let $\be=_\MNF\be_1\cdot\ldots\cdot\be_k$ and set $\bepr:=(1/\be_1)\cdot\be$. 
\[\kvalbe:=\left\{\begin{array}{ll}
            \ka^{{\alvec_{\restriction_i}}}_\be&\mbox{if } \be\le\aln\\[2mm]
            \ov(\alvec)\cdot\bepr&\mbox{if } \be_1=\aln\andsp k>1\\[2mm]
            \ov(\alvecbe)&\mbox{if } \be_1>\aln.
            \end{array}\right.\]
\end{defi}
Lemma 4.10 of \cite{CWc} shows that $\trs$ and $\ov$ invert each other, which is shown in 
Theorems 3.19 and 3.20 of \cite{W} by induction along term decomposition.
Note that in the case where $\be_1=\aln$ and $k>1$ we have $\kvalbe=\ov(\alvec^\frown\bepr)=\nuval_\bepr$.
We thus obtain representations of the functions $\ka$ and $\nu$ in multiplicative normal form.

The conservative extension of $\kappa$ and $\nu$ to their entire domain as well as
the definition of $\dpf$\index{$\dpf$}, which are in accordance with Definition 4.4 of \cite{CWc}, can now be carried out as in 
\cite{W} (Definitions 4.4 and 4.5) by recursion on the following simple term measure.
\begin{defi}[4.3 of \cite{W}]\label{Ttauvec}\index{$\Ttvec$}
Let $\tauvec\in\RS$, $\tauvec=(\tau_1,\ldots,\tau_n)$, $n\ge0$, $\tau_0:=1$.
The term system $\Ttvec$ is obtained from $\Ttn$ by successive substitution of parameters in $(\taui,\tauie)$ by their $\Tti$-representations,
for $i=n-1,\ldots,0$. The parameters $\taui$ are represented by the terms $\thtti(0)$.
The length $\ltvec(\al)$\index{$\ltvec$} of a $\Ttvec$-term $\al$ is defined inductively by 
\begin{enumerate}
\item $\ltvec(0):=0$,
\item $\ltvec(\be):=\ltvec(\ga)+\ltvec(\de)$ if $\be=_\NF\ga+\de$, and 
\item $\ltvec(\tht(\eta)):=\left\{\begin{array}{l@{\quad}l}
1&\mbox{ if }\quad\eta=0\\
\ltvec(\eta)+4&\mbox{ if }\quad\eta>0
\end{array}\right.$\\[2mm] 
where $\tht\in\{\tht^{\tau_i}\mid 0\le i\le n\}\cup\{\tht_{i+1}\mid i\in\N\}$.
\end{enumerate}
\end{defi}
Recall that for $\be=\thtt(\De+\eta)$, where $\tau\in\Ezone=\Ez\cup\{1\}$ and, according to Convention 4.1 of \cite{W07a},
$\eta<\Om_1$ and $\Om_1\mid\De$, i.e.\ $\De$ is a (possibly zero) multiple of $\Om_1$, we have\index{$\zetal$}
\begin{equation}\label{zetaldefi}
\zetbe:=\left\{\begin{array}{ll}
                    \logend(\eta) & \mbox{ if } \eta<\sup_{\si<\eta}\thtt(\De+\si)\\
                    0 & \mbox{ otherwise.}
                 \end{array}\right.
\end{equation}
The ordinal function $\logend$\index{$\logend$}, which picks the exponent of the last additive component, is characterized by
\begin{equation}\label{logendchar}
  \logend(\al+\be)=\left\{\begin{array}{ll}
                      \be & \mbox{ if }\De>0\\
                      \eta+1 & \mbox{ if } \De=0 \mbox{ and } \eta=\eps+k\mbox{ where }\eps\in\Ez^{>\tau} \mbox{ and } k<\om\\
                      (-1+\tau)+\eta & \mbox{ otherwise.}
                      \end{array}\right.
\end{equation}
In the case $\De = 0$ we have\index{$\log$}
\begin{equation}\label{logred}
\log((1/\tau)\cdot\be)=\left\{\begin{array}{l@{\quad}l}
\eta+1&\mbox{ if }\eta=\eps+k\mbox{ where }\eps\in\Ez^{>\tau} \mbox{ and } k<\om\\[2mm]
\eta&\mbox{ otherwise.}
\end{array}\right.
\end{equation}
We now recall the $\bardot$ operator, see Section 8 of \cite{W07a}, as given by Definition 5.1 of \cite{CWa}. 
If $\be>\tau$, let $\tau=\be_0,\ldots,\be_n=\be$
be its $\tau$-localization. If $\eta>0$, write $\eta=_\NF\etapr+\eta_0$ where $\eta_0\in\Hz$ and either $\etapr=0$ or $\etapr\ge\eta_0$,
otherwise set $\etapr$ and $\eta_0$ to $0$. 
$\bebar\in[\tau,\be)$ is then defined as follows.\index{$\bardot$}
\begin{equation}\label{barop}
  \bebar:= \left\{\begin{array}{ll}
                     \thtt(\De+\etapr)  & \mbox{ if }\eta_0=1\mbox{ or }\etapr<\sup_{\si<\etapr}\thtt(\De+\si)\\
                     \be_{n-1} & \mbox{ otherwise.}
                  \end{array}\right.
\end{equation}
With operators $\iota$ and $\la$ as in Definitions 7.1 and 7.5 of \cite{W07a} we have the following estimations of term complexity
as stated in a remark following Definition 4.3 in \cite{W}.
\begin{enumerate}
\item For $\be=\tht^{\tau_n}(\De+\eta)\in\Ez$ such that $\be\le\mu_{\tau_n}$ we have
\begin{equation}\label{iotalen} 
\ltvec(\De)=\ltvecbe(\iota_{\tau_n,\be}(\De))<\ltvec(\be).
\end{equation} 
\item For $\be\in\Ttvec\cap\Hz^{>1}\cap\Om_1$ let $\tau\in\{\tau_0,\ldots,\tau_n\}$ be maximal such that $\tau<\be$. 
Clearly,
\begin{equation}\label{barlen} \ltvec(\bebar) < \ltvec(\be),
\end{equation}
and
\begin{equation}\label{zelen}
\ltvec(\zetbe) < \ltvec(\be),
\end{equation}
In case of $\be\not\in\Ez$ we have
\begin{equation}\label{loglen} \ltvec(\log(\be)), \ltvec(\log((1/\tau)\cdot\be)) < \ltvec(\be),
\end{equation}
and for $\be\in\Ez$ we have
\begin{equation}\label{lalen} 
\ltvecbe(\latbe) < \ltvec(\be).
\end{equation}
\end{enumerate}

All results from Section 4 of \cite{CWc} have been re-established using induction on term complexity ($\ltvec$) in Section 4 of \cite{W}.
We now see that the functions $\ka$, $\nu$, and $\dpf$ completely resolve into summations of $\ov$-terms, which in turn are given in
multiplicative normal form and which increase with respect to the lexicographic ordering on tracking sequences. We therefore obtain
an explicit additive normal form representation for these enumeration functions.

\subsection{Tracking chains}\label{tcacsec}
Recall Definitions 5.1, 5.2, and 5.3 of \cite{CWc} for the notions of tracking chain, $\cml$\index{$\cml$}, (maximal) extension ($\me$)\index{$\me$},
and characteristic sequence\index{characteristic sequence, $\cs$} ($\cs$). 
While a proof of termination of 
arbitrary (non-maximal) stepwise extensions of tracking chains requires strong induction, as used in the proof of part a) of Lemma 5.4 
in \cite{CWc}, the termination of stepwise maximal extensions, as in Definition 5.2 of \cite{CWc}, is easily seen when applying the measure
$\ltvec$ (Definition \ref{Ttauvec}) from the second step on, as clause 2.3.1 of Definition 5.2 of \cite{CWc} can only be applied at the 
beginning, as mentioned in \cite{W}. In this context we will give an alternative, more instructive, proof of Lemma 5.5 b) and Corollary 5.6
of \cite{CWc} shortly. 

Notice that in \cite{CWc}, Lemmas 5.7, 5.8, and 5.10 do not depend on Lemmas 5.4, 5.5, and Corollary 5.6.   
Part a) of Lemma 5.7 depends on part b) of Lemma 3.12 of \cite{CWc}, the second part of which misses the condition $\chial(\be)=0$ as mentioned
in Section 2 of \cite{W}.
It should read as follows: For $\be<\mutal$ such that $\chial(\be)=0$ we even have $\rhoalbe+\al\le\latal$. This missing condition, however,
is fulfilled in the proof of Lemma 5.7 (cf.\ the definition of the delimiters $\rho_i$\index{$\rho_i$} in Definition 5.1 of \cite{CWc}).

As outlined in Subsection 5.1 of \cite{W}, the proof of Lemma 5.12 of \cite{CWc} actually proceeds by induction on the number of 
$1$-step extensions rather than by induction on $\cspr(\alvec)$ along $\klex$. Furthermore, the proof of Lemma 6.2 of \cite{CWc} proceeds 
by induction on the length of the additive decomposition of $\al$. The evaluation function $\ov$ and its shorthand 
$\tilde{\cdot}$\index{$\tilde{\cdot}$} as in 
Definition 5.9 of \cite{CWc}, which are applied in Definition 6.1 of \cite{CWc}, have been seen to be elementary recursive here, 
with detailed proofs in \cite{W}.

Corollary 6.6 of \cite{CWc} characterizes $\tc(\al+\dpf(\alvec))$ for $\al<\oneinf$ with tracking chain $\alvec:=\tc(\al)$.
Contrary to the formulation in \cite{CWc} this is equal to $\alvec_{\restriction_{i,j+1}}[\alcp{i,j+1}+1]$ if
either $(i,j):=\cml(\alvec)$ exists or $m_n>1$, $(i,j+1)=(n,m_n)$ $\andsp$ $\taucp{i,j+1}<\mu_\taucp{i,j}$. Otherwise it is equal to $\me(\alvec)$,
as stated. The same case distinction applies to the statement regarding $\be$. The adjustments in the proofs of Corollary 6.6 and 7.13 of 
\cite{CWc} are straightforward.

The following definition is a preparation for a more explicit proof of Lemma 5.5 b) and Corollary 5.6 of \cite{CWc}, see Case 1 below,
and for \emph{base minimization} introduced in Definition \ref{pitcdefi} in Section \ref{spcltcsec}, see Case 2. Our aim is to
keep track of the indices of $\leo$-components along $\leo$-chains and relate them to term decomposition.
\begin{defi}\label{sivecdefi}
Let $\alvec\in\TC$ with components $\alvec_i=(\alcp{i,1},\ldots,\alcp{i,m_i})$ for $i=1,\ldots,n$.
\\[2mm]
{\bf Case 1:} $(i,j):=\cml(\alvec)$ exists. Set $\tau:=\taucp{i,j+1}$, $\taupr:=\taucp{i,j}$, and $\alvecpr:=\alvec_{\restriction_{i,j+1}}$. 
Let $\tauvec$ be the chain associated with $\alvecpl:=\mepl(\alvecpr)=\me(\alvec)$. 
Define $\sivec=(\si_1,\ldots,\si_k):=\cs(\alvec_{\restriction_{i,j}})$, which by
Lemma 5.10 of \cite{CWc} is equal to $\trs(\tauticp{i,j})$.
\[\si_{k+1}:=\left\{\begin{array}{ll}
                      \tau & \mbox{ if }\tau\in\Ez^{>\taupr}\\[1mm]
                      \sumend(\rhoargs{\taupr}{\tau}) & \mbox{ otherwise,}
                    \end{array}\right.\]
which corresponds to $\taucp{i,j+1}$ in the former and to $\taucp{i+1,1}$ in the latter case.                   
\\[2mm]
{\bf Case 2:} $\cml(\alvec)$ does not exist. Set $\tau:=\taucp{n,m_n}$ and $\taupr:=\tau_{(n,m_n)^\prime}$. 
Define $\alvecpl:=\me(\alvecpr)$ where
\[\alvecpr:=\left\{\begin{array}{ll}
              \alvec & \mbox{if }m_n=1\mbox{ or }\tau=1 \\[1mm]
              {\alvec_{\restriction_{n-1}}}^\frown(\alcp{n,1},\ldots,\alcp{n,m_n},\mu_\tau) &
                 \mbox{if }m_n>1\mbox{ and }\tau\in\Ez^{>\taupr}\\[1mm] 
              \alvec^\frown(\rhoargs{\taupr}{\tau}) & \mbox{otherwise.} 
            \end{array}\right.\]
Extend $\tauvec$ to be the chain associated with $\alvecpl$. Define $\sivec=(\si_1,\ldots,\si_k):=\cs(\alvec_{\restriction_{(n,m_n)^\prime}})$, which by Lemma 5.10 of \cite{CWc} is equal to $\trs(\tautipr)$, unless $(n,m_n)^\prime=(1,0)$. Let 
\[\si_{k+1}:=\left\{\begin{array}{ll}
                      \tau & \mbox{ if }m_n=1\mbox{ or }\tau=1\mbox{ or }\tau\in\Ez^{>\taupr}\\[1mm]
                      \sumend(\rhoargs{\taupr}{\tau}) & \mbox{ otherwise,}
                    \end{array}\right.\]
which corresponds to $\taucp{n,m_n}$ in the former and to $\taucp{n+1,1}$ in the latter case.                   
\\[2mm]
Now, for either case, define $\bs_h:=\sivec_{\restriction_h}=(\si_1,\ldots,\si_h)$ for $h=0,\ldots,k$, and 
$\bspr_h:=\sivec_{\restriction_{h-1}}$ for $h=1,\ldots,k$. If $\si_{l+1}$ has been defined for some $l\ge k$, 
then write $\bs_l=(\rho_1,\ldots,\rho_r)$, set $\rho_0:=1$, 
and let $h\in\{0,\ldots,r\}$ be maximal such that $\rho_h\le\si_{l+1}$. Define $\sipr_{l+1}:=\rho_h$ and $\bspr_{l+1}:=(\rho_1,\ldots,\rho_h)$.
\\[2mm]
Let $l\ge k+1$ be such that $\si_l,\sipr_l,\bs_l,\bspr_l$\index{$\bs$} are defined. If $\si_l=\sipr_l$, the sequence $\sivec=(\si_1,\ldots,\si_l)$ is 
complete. Otherwise we have $\sipr_l<\si_l$, hence $\sumend(\log((1/\sipr_l)\cdot\si_l))=\sumend(\log(\si_l))$, 
and $\si_l$ corresponds to some $\taucp{s,t}$.
Define 
\begin{equation}\label{lamlogit}
  \si_{l+1}:=\left\{\begin{array}{ll}
                      \sumend(\la^{\sipr_l}_{\si_l}) & \mbox{ if }\si_l\in\Ez^{>\sipr_l}\\[1mm]
                      \sumend(\log(\si_l)) & \mbox{ otherwise,}
                    \end{array}\right.
\end{equation}
which in the former case either corresponds to $\taucp{s,t+1}$ if $\si_{l+1}=\mu_{\si_l}\in\Ez^{>\si_l}$ or $\taucp{s+1,1}$ otherwise,
and in the latter case corresponds to $\taucp{s+1,1}$.
Finally, $\bs_{l+1}$ is defined by 
\[\bs_{l+1}:=\left\{\begin{array}{ll}
                      {\bspr_{l+1}}^\frown\si_{l+1} & \mbox{ if }\si_{l+1}\in\Ez^{>\sipr_{l+1}}\\[1mm]
                      \bspr_{l+1} & \mbox{ otherwise.}
                    \end{array}\right.\]
\end{defi}
\begin{rmk} From the $k+1$-th component ($k+2$-th in Case 2 for $m_n>1\andsp\tau\in\Ez^{>\taupr}$) on $\sivec$ is the sequence of least additive components of indices of $\alvecpl$, 
starting with the terminal index of $\alvecpr$, after omitting superfluous $\nu$-indices, i.e.\ a sub-maximal 
$\nu$-index at the beginning of the maximal extension or indices of a form $\mu_\tau$ 
that are followed in $\alvecpl$ by $\la_\tau$, cf.\ clause 2.2.1 of Definition 5.2 of \cite{CWc}.
All relevant context information regarding $\tauvec$-indices and units/bases in the sense of Definition 5.1 of \cite{CWc} is kept for 
later reference, which motivates the first $k+1$ components of $\sivec$.
\end{rmk}

Now we arithmetically characterize the sequence defined above and establish its generation by term decomposition, which in turn
provides the motivation for the indicator function $\chi$ from Definition 3.1 of \cite{CWc}.
\begin{defi}
Let $\tauvec=(\tau_1,\ldots,\tau_n)\in\RS$ where $n\ge 1$ and $\al\in\Ttvec\cap\Hz\cap\Om_1$.
We define a sequence $\mqtvec(\al)$ of subterms as follows.\index{$\mq$}
\begin{enumerate} 
  \item If $\al\le\tau_1$,\\
        $\mqtvec(\al):=(\al)$.
  \item Suppose that $\al$ is of a form $\thtti(\De+\eta)>\tau_1$ where $1\le i\le n$ and $\eta<\Om_1\mid\De$.\\ 
    Set $\De_0:=\sumend(\De)$ and $\eta_0:=\sumend(\eta)$.
    \begin{enumerate}
      \item[2.1.] Suppose $\eta=\sup_{\si<\eta}\thtti(\De+\si)$ or $\log(\eta_0)=0$. 
        \begin{enumerate}
          \item[2.1.1.] If $\De>0$,\\
                        $\mqtvec(\al):=(\al)^\frown\mq^{{\tauvec_{\restriction_i}}^\frown\al}(\iotial(\De_0))$.
          \item[2.1.2.] If $\De=0$ and $\eta>0$,\\
                        $\mqtvec(\al):=(\al,1)$.
          \item[2.1.3.] Otherwise\\
                        $\mqtvec(\al):=(\al,0)$. 
        \end{enumerate}
      \item[2.2.] Otherwise. 
        \begin{enumerate}
          \item[2.2.1.] If $\De=0$ or $\De>0$ and $\eta_0\in\Ez$,\\
                        $\mqtvec(\al):=(\al)^\frown\mq^{\tauvec_{\restriction_i}}(\eta_0)$. 
          \item[2.2.2.] If $\De>0$ and $\eta_0=\thttj(\rho)$ where $1\le j\le i$ 
                        such that $\rho<\Om_1$, $\rho\not\in\Ez^{>\tau_j}$, and $\logend(\rho)>0$,\\
                        $\mqtvec(\al):=(\al)^\frown\mq^{\tauvec_{\restriction_i}}(\sumend(\rho))$.
          \item[2.2.3.] If $\De>0$ and $\eta_0<\tau_1$,\\
                        $\mqtvec(\al):=(\al)^\frown(\sumend(\log(\eta_0)))$. 
          \item[2.2.4.] Otherwise\\
                        $\mqtvec(\al):=(\al,1)$.
        \end{enumerate}
    \end{enumerate}
\end{enumerate}
\end{defi}

\begin{lem} Let $\tauvec=(\tau_1,\ldots,\tau_n)\in\RS$ where $n\ge 1$ and $\al\in\Ttvec\cap\Hz\cap\Om_1$.
Let $k\in\{0,\ldots,n\}$ be maximal such that $\tau_k\le\al$ where $\tau_0:=1$.
We have $\mqtvec(\al)=(\al)$ if $\al\le\tau_1$ and $\mqtvec(\al)=(\al,0)$ if $\al=\tau_k$, $k>1$.
If $\al\in\Ez^{>\tau_k}$ then $\mqtvec(\al)$ appends to $\al$ the sequence $\mq^{\tauvec_{\restriction_k}}(\sumend(\la^{\tau_k}_\al))$,
and otherwise it appends the sequence $\mq^{\tauvec_{\restriction_k}}(\sumend(\log(\al)))$.
\end{lem}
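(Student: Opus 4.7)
The plan is to verify each of the four assertions by direct inspection of the defining clauses of $\mqtvec(\al)$, confirming in each case that the prescribed clause fires and that the emitted sequence matches the shape claimed. The induction is along the term measure $\ltvec$, invoked only to hand off to $\mq$-calls on proper subterms. The bridge between the syntactic clauses of the definition and the analytic quantities $\la^{\tau_k}_\al$ and $\log(\al)$ will be provided by the explicit formulas (\ref{zetaldefi})--(\ref{logred}), together with Definition 7.5 of \cite{W07a} for $\la^{\tau_k}_\al$.

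The two easy cases dispose of quickly. For $\al\le\tau_1$, clause 1 of the definition applies directly, giving $\mqtvec(\al)=(\al)$. For $\al=\tau_k$ with $k>1$, the $\Ttvec$-representation of $\al$ is $\tht^{\tau_k}(0)$, so $\De=\eta=0$; clauses 2.1.1 and 2.1.2 both fail for lack of positivity, and clause 2.1.3 fires, producing $(\al,0)$.

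For the third claim, $\al\in\Ez^{>\tau_k}$ forces the $\Ttvec$-representation $\al=\thtti(\De+\eta)$ to have $\eta=0$ (since $\al$ is an epsilon number above $\tau_i$, which cannot arise from $\eta>0$) and $\De>0$ (else $\al=\tau_i$, excluded by $\al\in\Ez^{>\tau_k}$ combined with the maximality of $k$). A routine check via the range of the $\thtti$ functions pins down $i=k$. Clause 2.1.1 then fires and yields $(\al)^\frown\mq^{\tauvec_{\restriction_k}}(\iotial(\De_0))$; the identity $\iotial(\De_0)=\sumend(\la^{\tau_k}_\al)$ unfolds directly from Definition 7.5 of \cite{W07a} combined with (\ref{iotalen}).

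The fourth claim is the combinatorial heart of the argument: $\al$ is principal with $\al>\tau_k\ge\tau_1$ and $\al\notin\Ez$, so its representation $\thtti(\De+\eta)$ has $\eta>0$, and I must walk through clauses 2.1.2 and 2.2.1--2.2.4, verifying in each that the emitted sequence equals $(\al)^\frown\mq^{\tauvec_{\restriction_k}}(\sumend(\log(\al)))$. Again $i=k$ is forced by the choice of canonical base. In the $\De=0$ situation the match follows immediately from (\ref{logred}), while in the $\De>0$ cases (\ref{logendchar}) describes $\logend$ of the relevant component and hence $\sumend(\log(\al))$. The main obstacle will be clauses 2.2.2 and 2.2.3, where the last additive component of $\log(\al)$ is threaded through a nested theta subterm $\tht^{\tau_j}(\rho)$ or descends into the sub-$\tau_1$ regime; in each of these I match $\sumend(\rho)$, respectively $\sumend(\log(\eta_0))$, with $\sumend(\log(\al))$ by a patient unfolding of $\logend$ and $\sumend$, and simultaneously verify that the base index drops exactly to $k$ as required.
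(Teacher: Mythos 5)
The paper's own proof of this lemma is a one-liner (``this follows directly from the definition''), so your plan of walking through the clauses is in principle exactly the intended argument. However, your allocation of the clauses of the definition of $\mqtvec$ to the four assertions of the lemma is wrong at a structural level. The culprit is your claim that $\al\in\Ez^{>\tau_k}$ forces $\eta=0$ in the representation $\al=\tht^{\tau_k}(\De+\eta)$. What characterizes epsilon numbers above $\tau_k$ is $\De>0$ alone (this is visible in (\ref{logendchar}), where $\logend$ of a term with $\De>0$ is the term itself); the parameter $\eta<\Om_1$ is an enumeration index and is in general nonzero --- e.g.\ the terms $\tht_1(\Om_1+\eta)$ for $0<\eta<\Om_1$ all denote epsilon numbers. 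The whole point of the split between clauses 2.1 and 2.2, and of the quantity $\zetal$ in (\ref{zetaldefi}), is to decide whether, for an epsilon number, the next entry of $\mqtvec(\al)$ is extracted from $\De_0$ (clause 2.1.1) or from $\eta_0$ (the $\De>0$ branches of 2.2.1--2.2.4); the latter is precisely the situation in which $\sumend(\la^{\tau_k}_\al)$ is computed from $\logend(\eta)$ rather than from $\iota_{\tau_k,\al}(\De_0)$.

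Concretely: clauses 2.2.2 and 2.2.3 explicitly require $\De>0$ and hence concern only epsilon numbers, so they can never fire in your fourth case; conversely, for $\al\notin\Ez$ we have $\De=0$, so under 2.2 the first disjunct of clause 2.2.1 always applies, and the only clauses reachable in the ``otherwise'' case of the lemma are 2.1.2 and the $\De=0$ branch of 2.2.1 --- a much shorter check than the one you announce. Your treatment of the third assertion is correspondingly incomplete: you verify it only for $\eta=0$ via 2.1.1, omitting both the $\eta>0$ instances of 2.1.1 and all of the 2.2-clauses with $\De>0$, which is where matching against $\sumend(\la^{\tau_k}_\al)$ actually requires work (via the definition of $\la$ and its invariance under change of relativization, Lemma 7.7 of \cite{W07a}). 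A minor further inaccuracy: clause 2.1.1 recurses with superscript ${\tauvec_{\restriction_k}}^\frown\al$, not $\tauvec_{\restriction_k}$, and reconciling this with the statement is part of what must be checked. The two easy cases ($\al\le\tau_1$ and $\al=\tau_k$) are fine.
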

\begin{proof} This follows directly from the definition. \qed \end{proof}

\begin{lem}\label{simqchilem} In the context of Definition \ref{sivecdefi} let $\sipr:=\sipr_{k+1}$ and $\si:=\si_{k+1}$. We have 
\[\mq^{(\sipr)}(\si)=(\si_{k+1},\ldots,\si_l)\]
where $l$ is minimal such that $\si_l\le\sipr$.
$\chi^{\sipr}$ is constant on $(\si_{k+1},\ldots,\si_l)$ and equal to $1$ if and only if $\si_l=\sipr$.
\end{lem}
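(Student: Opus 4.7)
\emph{Plan.} I would prove both assertions simultaneously by induction on $\ltvec(\si)$ in a sufficiently rich ambient $\Ttvec$-system containing $\sipr$. By maximality in the choice of $\sipr=\sipr_{k+1}$ as the largest $\rho_h\in\bs_k$ with $\rho_h\le\si_{k+1}$, we have $\sipr\le\si$. In the base case $\si=\sipr$, clause 1 of the definition of $\mq^{(\sipr)}$ directly applies, giving $\mq^{(\sipr)}(\si)=(\si)$, so $l=k+1$ with $\si_l=\sipr$; the claim about $\chi^{\sipr}$ then reduces to $\chi^{\sipr}(\sipr)=1$, which is immediate from Definition 3.1 of \cite{CWc}.

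For the inductive step, $\si>\sipr$, so $\si$ is additively principal (as the output of a $\sumend$ in (\ref{lamlogit}) or of the initial $\sumend$-clause of Definition \ref{sivecdefi}) with main $\tht$-application at base $\sipr$, say $\si=\tht^{\sipr}(\De+\eta)$. The two subcases of (\ref{lamlogit}) now mirror the subcases of clause 2 of $\mq^{(\sipr)}$. If $\si\in\Ez^{>\sipr}$, then $\si_{l+1}=\sumend(\la^{\sipr}_{\si})$, and via the description of $\la^{\sipr}_{\si}$ in Definition 7.5 of \cite{W07a} together with (\ref{zetaldefi}), this is identified with the first entry of the tail of $\mq^{(\sipr)}(\si)$. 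When $\De>0$ with $\eta$ saturated or $\log(\eta_0)=0$ this is clause 2.1.1, where the $\mq$-base grows to $(\sipr,\si)$; this matches the clause $\bs_{l+1}=\bspr_{l+1}^\frown\si_{l+1}$ of Definition \ref{sivecdefi} once one identifies $\sumend(\la^{\sipr}_{\si})$ with $\sumend(\iotial(\De_0))$. When $\De=0$, clauses 2.1.2/2.1.3 fire and the iteration of (\ref{lamlogit}) in turn produces a terminator with $\si_{l+1}\le\sipr$. If $\si\notin\Ez^{>\sipr}$, then $\si_{l+1}=\sumend(\log(\si))$, matching the recursive calls of clauses 2.2.1 and 2.2.2 via (\ref{logend})/(\ref{logred}), and the terminators of clauses 2.2.3/2.2.4 via $\si_{l+1}\le\sipr$. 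The induction hypothesis, applicable because $\ltvec(\si_{l+1})<\ltvec(\si)$ by (\ref{loglen}) and (\ref{lalen}), then concludes the first assertion.

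For the $\chi^{\sipr}$-claim, the recursive definition of $\chi^{\sipr}$ in Definition 3.1 of \cite{CWc} traces exactly the canonical descent along the main $\tht^{\sipr}$-subterms of $\si$, which is the chain computed by (\ref{lamlogit}); hence $\chi^{\sipr}(\si_{l'})$ is invariant along $l'=k+1,\ldots,l$. At the terminal step, the base clause of $\chi^{\sipr}$ yields $\chi^{\sipr}(\si_l)=1$ if $\si_l=\sipr$ and $\chi^{\sipr}(\si_l)=0$ otherwise (i.e.\ $\si_l<\sipr$).

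The main obstacle is the bookkeeping across the seven subcases of $\mq^{(\sipr)}$, most notably matching case 2.1.1 — where $\mq$ grows its base to $(\sipr,\si)$ and recurses on $\iotial(\De_0)$ — against the single-step successor $\sumend(\la^{\sipr}_{\si})$ of (\ref{lamlogit}); one must verify, from the explicit description of $\la^{\sipr}_{\si}$ and of $\iotial$, that $\sumend(\la^{\sipr}_{\si})=\sumend(\iotial(\De_0))$ and that subsequent iteration steps track the growth of the base sequence $\bs_l$ consistently with the base growth in $\mq$.
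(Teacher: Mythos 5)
Your proposal is correct and takes essentially the same route as the paper: the case-by-case matching of the $\mq^{(\sipr)}$-recursion against the recursion (\ref{lamlogit}) is precisely the content of the lemma immediately preceding this one, which the paper's proof simply cites, together with Lemma 7.7 of \cite{W07a} for the base-change identification of $\sumend(\la^{\sipr}_{\si})$ that you flag as the main obstacle, and Lemma 3.3 of \cite{CWc} for the constancy of $\chi^{\sipr}$ along the descent. Invoking that preceding lemma directly would let you skip re-deriving its case analysis of the $\mq$-clauses.
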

\begin{proof} This is a direct consequence of the previous lemma and the definitions involved, cf.\ (\ref{lamlogit}),
using Lemma 7.7 of \cite{W07a} and Lemma 3.3 of \cite{CWc}. \qed \end{proof}
\begin{rmk} As a consequence of the above lemma we obtain an explicit proof of part b) of Lemma 5.5, using the proof of part a), 
and of Corollary 5.6 of \cite{CWc} since the sequences of terms concerned as well as, modulo translations back into $\T^{\sipr}$, 
the corresponding evaluations of the indicator function $\chi$ are matched (skipping intermediate evaluation steps). 
The above preparation also proves useful in the context of Definition \ref{pitcdefi}. 
\end{rmk}

Note that it is possible to translate all relativized terms in a setting $\Ttvec$, see Definition \ref{Ttauvec}, 
back into $\T^{\tau_1}$ or even $\T$, see Definition 6.2
\footnote{In the definition of $\ttau$ there, it should read \emph{For $\xi\in\Tal\cap\Om_2$ in order to define 
$\thtal(\xi)^\ttau$ we distinguish between four cases: [...]}.}
of \cite{W07a}, and establish correspondences between all relevant
subterms. Instead, we have chosen to establish all required invariance properties of operators such as the $\la$-operator with respect
to changes of relativization as in Lemma 7.7 of \cite{W07a}. This could be systematically studied starting from a mapping that 
assigns $\Ttvec$-terms, where $\tauvec\in\RS$, to all $\thti$-subterms of a $\thtnod$-term, where the varying settings of relativization 
given by $\tauvec$ appropriately match the respective nestings of functions $\thte,\ldots,\thti$.

\subsection{Arithmetical characterization of $\Ctwo$}
Subsection 5.3 of \cite{W} provides a detailed picture of the restriction of $\Rtwo$ to $\oneinf$ on the basis of the 
results of \cite{CWc} and displays an elementary recursive arithmetical characterization of this 
structure, given in terms of tracking chains, which we will refer to as $\Ctwo$. We quote this part of \cite{W} in order to increase the
accessibility of \cite{CWc} and the present article: 
 
We begin with a few observations that follow from the results in Section 7 of \cite{CWc} and explain the concept of tracking chains.
Evaluations of all initial chains of some tracking chain $\alvec\in\TC$ form a $\lo$-chain.
Evaluations of initial chains $\alvec_{\restriction_{i,j}}$ where $(i,j)\in\dom(\alvec)$ and $j=2,\ldots,m_i$ with fixed index
$i$ form $\ktwo$-chains. 
Recall that indices $\alcp{i,j}$ are $\ka$-indices for $j=1$ and $\nu$-indices otherwise, see Definitions 5.1 and 5.9 of \cite{CWc}.

According to Theorem 7.9 of \cite{CWc}, an ordinal $\al<\oneinf$ is $\leo$-minimal if and only if its tracking chain consists of 
a single $\ka$-index, i.e.\ if its tracking chain $\alvec$ satisfies $(n,m_n)=(1,1)$. Clearly, the least $\leo$-predecessor of
any ordinal $\al<\oneinf$ with tracking chain $\alvec$ is $\ov(\alvec_{\restriction_{1,1}})=\ka_\alcp{1,1}$.
According to Corollary 7.11 of \cite{CWc} the ordinal $\oneinf$ is $\leo$-minimal.
An ordinal $\al>0$ has a non-trivial $\leo$-reach if and only if $\taucp{n,1}>\taunstar$, hence in particular when $m_n>1$, 
cf.\ condition 2 in Definition 5.1 of \cite{CWc}. 

We now turn to a characterization of the \emph{greatest immediate $\leo$-successor}, $\gs(\al)$, \index{$\gs$}
of an ordinal $\al<\oneinf$ with tracking chain $\alvec$.
Recall the notations $\rho_i$ and $\alvec[\xi]$ from Definition 5.1 of \cite{CWc}. The largest $\al$-$\leo$-minimal ordinal is the root 
of the $\la$th $\al$-$\leo$-component for $\la:=\rho_n\minusp1$. Therefore, if $\al$ has a non-trivial $\leo$-reach, its greatest immediate 
$\leo$-successor $\gs(\al)$ has the tracking chain $\alvec^\frown(\la)$, \emph{unless} either
$\taucp{n,m_n}<\mu_{\taunpr}\andsp\chi^\taunpr(\taucp{n,m_n})=0$, where 
$\tc(\gs(\al))=\alvec[\alcp{n,m_n}+1]$, or $\alvec^\frown(\la)$ is in conflict with either condition 5 of Definition 5.1 of \cite{CWc},
in which case $\tc(\gs(\al))={\alvec_{\restriction_{n-1}}}^\frown(\alcp{n,1},\ldots,\alcp{n,m_n},1)$, or condition 6
of Definition 5.1 of \cite{CWc}, in which case $\tc(\gs(\al))=\alvec_{\restriction_{i,j+1}}[\alcp{i,j+1}+1]$.
\footnote{This condition is missing in \cite{W}.}
In case $\al$ does not have any $\lo$-successor, we set $\gs(\al):=\al$.

$\al$ is $\letwo$-minimal if and only if for its tracking chain $\alvec$ we have $m_n\le2$ and $\taunstar=1$,
and $\al$ has a non-trivial $\letwo$-reach if and only if $m_n>1$ and $\taucp{n,m_n}>1$. Note that any $\al\in\On$ with
a non-trivial $\letwo$-reach is the proper supremum of its $\lo$-predecessors, hence $\oneinf$ does not possess any $\ktwo$-successor. 
Iterated closure under the relativized notation system $\Tt$ for $\tau=\oneinf, (\oneinf)^\infty, \ldots$ results in the infinite $\ktwo$-chain
through $\On$. Its $\lo$-root is $\oneinf$, the root of the \emph{master main line}\index{master main line} 
of $\Rtwo$, outside the core of $\Rtwo$, i.e.\ $\oneinf$, see \cite{W3}.

Recall Definition 7.7 of \cite{CWc} of $\predec_i$ and $\predecs_i$.\index{$\predec_i$, $\predecs_i$}
According to part (a) of Theorem 7.9 of \cite{CWc} $\al$ has a greatest $\lo$-predecessor if and only if it is not
$\leo$-minimal and has a trivial $\letwo$-reach (i.e.\ does not have any $\ktwo$-successor). This is the case if 
and only if either $m_n=1$ and $n>1$, where we have $\predec_1(\al)=\ordcp{n-1,m_{n-1}}(\alvec)$, or $m_n>1$ and
$\taucp{n,m_n}=1$. In this latter case $\alcp{n,m_n}$ is of a form $\xi+1$ for some $\xi\ge 0$, and using again the notation
from Definition 5.1 of \cite{CWc} we have
$\predec_1(\al)=\ov(\alvec[\xi])$ if $\chi^\taucp{n,m_n-1}(\xi)=0$, whereas $\predec_1(\al)=\ov(\me(\alvec[\xi]))$ in
the case $\chi^\taucp{n,m_n-1}(\xi)=1$.
  
Recall Definition 7.12 of \cite{CWc}, defining for $\alvec\in\TC$ the notation $\alvec^\star$\index{$\alvec^\star$} 
and the index pair $\gbo(\alvec)=:(n_0,m_0)$,\index{$\gbo$} 
which according to Corollary 7.13 of \cite{CWc} enables us to express the $\leo$-reach $\lh(\al)$\index{$\lh$} of $\al:=\ov(\alvec)$, 
cf.\ Definition 7.7 of \cite{CWc}, by
\begin{equation}\label{lhequation}\lh(\al)=\ov(\me(\bevec^\star)),
\end{equation} 
where $\bevec:=\alvec_{\restriction_{n_0,m_0}}$, which in the case $m_0=1$ is equal to 
$\ov(\me(\bevec))=\ordcp{n_0,1}(\alvec)+\dpf_{\tauticp{n_0,0}}(\taucp{n_0,1})$
and in the case $m_0>1$ equal to $\ov(\me(\bevec[\mu_\taucp{n_0,m_0-1}]))$.
Note that if $\cml(\alvec^\star)$ does not exist we have
\[\lh(\al)=\ov(\me(\alvec^\star)),\]
and the tracking chain $\bevec$ of any ordinal $\be$ such that $\ov(\alvec^\star)\leo\be$ is then an extension of $\alvec$,
$\alvec\sub\bevec$, as will follow from Lemma \ref{subresplem}.

The relation $\leo$ can be characterized by 
\begin{equation}\label{leoequation}\al\leo\be 
\quad\aeq\quad\al\le\be\le\lh(\al),
\end{equation} 
showing that $\leo$ is a forest contained in $\le$ that \emph{respects} the ordering $\le$: 
if $\al\le\be\le\ga$ and $\al\leo\ga$ then $\al\leo\be$. 
  
We now recall how to retrieve the greatest $\ktwo$-predecessor of an ordinal below $\oneinf$, if it exists, and the iteration of this procedure to obtain the maximum
chain of $\ktwo$-predecessors. Recall Definition 5.3 and Lemma 5.10 of \cite{CWc}.
Using the following proposition we obtain two other useful characterizations of the relationship $\al\letwo\be$. 
 
\begin{prop}[5.6 of \cite{W}]\label{letwopredprop} Let $\al<\oneinf$ with $\tc(\al)=:\alvec$.
We define a sequence $\sivec\in\RS$ as follows.
\begin{enumerate}
\item If $m_n\le2$ and $\taunstar=1$, whence $\al$ is $\letwo$-minimal according to Theorem 7.9 of \cite{CWc}, set $\sivec:=()$. Otherwise,
\item if $m_n>2$, whence $\predec_2(\al)=\ordcp{n,m_n-1}(\alvec)$ with base $\taucp{n,m_n-2}$ by Theorem 7.9 of \cite{CWc}, we set
$\sivec:=\cs(\alvec_{\restriction_{n,m_n-2}})$,
\item and if $m_n\le2$ and $\taunstar>1$, whence $\predec_2(\al)=\ordcp{i,j+1}(\alvec)$ with base $\taucp{i,j}$ where $(i,j):=n^\star$, again according to Theorem 7.9 of \cite{CWc}, we set
$\sivec:=\cs(\alvec_{\restriction_{i,j}})$.
\end{enumerate}
Each $\si_i$ is then of a form $\taucp{k,l}$ where $1\le l< m_k$, $1\le k \le n$. The corresponding $\ktwo$-predecessor of $\al$ is $\ordcp{k,l+1}(\alvec)=:\be_i$.
We obtain sequences $\sivec=(\si_1,\ldots,\si_r)$ and $\bevec=(\be_1,\ldots,\be_r)$ with $\be_1\ktwo\ldots\ktwo\be_r\ktwo\al$, where $r=0$ if $\al$ is $\letwo$-minimal,
so that $\predecs_2(\al)=\{\be_1,\ldots,\be_r\}$ and hence
$\be\ktwo\al$ if and only if $\be\in\predecs_2(\al)$, displaying that $\letwo$ is a forest contained in $\leo$.\qed 
\end{prop}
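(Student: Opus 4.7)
The plan is to derive the proposition by iterated application of Theorem 7.9 of \cite{CWc}, which both supplies the $\letwo$-minimality criterion and computes $\predec_2(\al)$ in terms of the tracking chain $\alvec$. Clause~1 of the proposition is just a restatement of the $\letwo$-minimality criterion from Theorem 7.9, so there is nothing to prove. Clauses~2 and~3 are exactly the two subcases of Theorem 7.9 that yield a non-empty $\predec_2(\al)$; in each case the theorem specifies $\be_1:=\predec_2(\al)$ together with its base, which by Lemma 5.10 of \cite{CWc} coincides with the first entry $\si_1$ of the characteristic sequence $\cs$ applied to the appropriate initial segment of $\alvec$. Thus $\si_1$ has the form $\taucp{k_1,l_1}$ with $l_1<m_{k_1}$, and $\be_1=\ordcp{k_1,l_1+1}(\alvec)$ as claimed.

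Next I would iterate: the tracking chain of $\be_i$ is $\alvec_{\restriction_{k_i,l_i+1}}$, so Theorem 7.9 applied to $\be_i$ either reports $\letwo$-minimality (in which case $\sivec$ and $\bevec$ terminate at length $i$) or produces the next greatest $\ktwo$-predecessor $\be_{i+1}$. The key observation is that the recursive definition of $\cs$ (Definition 5.3 of \cite{CWc}) unfolds in precisely the same pattern as the successive application of $\predec_2$ dictated by Theorem 7.9: from the initial segment chosen in clause~2 or~3, the next entry in $\cs$ is extracted by descending to the next $\star$-position in the tracking chain, which is exactly the position identifying the base of the next $\ktwo$-predecessor. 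Hence $\be_{i+1}=\ordcp{k_{i+1},l_{i+1}+1}(\alvec)$ where $\si_{i+1}=\taucp{k_{i+1},l_{i+1}}$.

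To obtain the chain $\be_1\ktwo\ldots\ktwo\be_r\ktwo\al$ I would invoke transitivity of $\letwo$, which holds because $\letwo$ codes $\Sigma_2$-elementarity. For completeness, namely $\predecs_2(\al)=\{\be_1,\ldots,\be_r\}$, I would use that $\letwo$ is a forest (again a consequence of the elementary-substructure interpretation), so the $\ktwo$-predecessors of $\al$ are linearly ordered by $\ktwo$ with greatest element $\predec_2(\al)=\be_1$; an induction on the depth of iterated $\predec_2$ applied to $\be_1$ then yields $\predecs_2(\be_1)=\{\be_2,\ldots,\be_r\}$, whence $\predecs_2(\al)=\{\be_1\}\cup\predecs_2(\be_1)$.

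The main obstacle will be the careful bookkeeping in the iteration step: verifying position-by-position that the recursive unfolding of $\cs$ via Definitions 5.1 and 5.3 of \cite{CWc}, including the interaction of the indicator $\chi$ with the $\star$-operation and the distinction between $\ka$- and $\nu$-indices, corresponds exactly to the iterated application of Theorem 7.9. Once this alignment is established, the remaining parts of the argument reduce to routine appeals to the structure theory developed in \cite{CWc}.
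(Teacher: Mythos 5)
Your proposal takes essentially the same route as the paper, which states this proposition with an immediate \qed and treats it as a direct consequence of Theorem 7.9 of \cite{CWc} together with Definition 5.3 and Lemma 5.10 of \cite{CWc} --- precisely the alignment of iterated $\predec_2$ with the recursive unfolding of $\cs$ that you describe. One harmless slip: in the proposition's enumeration the greatest $\ktwo$-predecessor is $\be_r$ (not $\be_1$), so your iteration simply lists the predecessors in the reverse order.
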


\begin{lem}[5.7 of \cite{W}]\label{letwosuclem} Let $\al,\be<\oneinf$ with tracking chains $\tc(\al)=\alvec=(\alvec_1,\ldots,\alvec_n)$, where
$\alvec_i=(\alcp{i,1},\ldots,\alcp{i,m_i})$, $1\le i\le n$, and $\tc(\be)=\bevec=(\bevec_1,\ldots,\bevec_l)$, 
$\bevec_i=(\becp{i,1},\ldots,\becp{i,k_i})$, $1\le i\le l$. 
Assume further that $\alvec\subseteq\bevec$ with
associated chain $\tauvec$ and that $m_n>1$. Set $\tau:=\taucp{n,m_n-1}$. The following are equivalent:
\begin{enumerate}
\item $\al\letwo\be$
\item $\tau\le\taucp{j,1}$ for $j=n+1,\ldots,l$
\item $\tauti\mid\be$.
\end{enumerate}
\end{lem}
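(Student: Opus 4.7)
The proof splits into two equivalences: $(1)\Leftrightarrow(2)$ via iterated application of Proposition~\ref{letwopredprop}, and $(2)\Leftrightarrow(3)$ via the multiplicative normal form of $\be=\ov(\bevec)$ extracted from Definition~\ref{odef}. The combinatorial machinery developed around Definition~\ref{sivecdefi} and Lemma~\ref{simqchilem} bridges the two.

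For $(1)\Leftrightarrow(2)$, I first dispose of the trivial case $l=n$, in which $\alvec=\bevec$, $\al=\be$, and condition~(2) is vacuous. Otherwise I iterate Proposition~\ref{letwopredprop} starting from $\be$, obtaining the sequence of $\ktwo$-predecessors $\be_1\ktwo\cdots\ktwo\be_r\ktwo\be$ with $\be_i=\ordcp{k_i,l_i+1}(\bevec)$ and corresponding bases $\si_i=\taucp{k_i,l_i}$ forming a sequence $\sivec$ obtained by applying $\cs$ to an initial segment of $\bevec$. Since $\alvec\sub\bevec$ terminates at position $(n,m_n)$, the ordinal $\al=\ov(\alvec)$ coincides with some $\be_i$ iff $(k_i,l_i+1)=(n,m_n)$, equivalently iff $\tau=\taucp{n,m_n-1}$ occurs in $\sivec$. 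By definition of $\cs$ together with the behaviour of the $\bardot$-operator, the sequence $\sivec$ is produced by a walk back through $\bevec$ that successively extracts dominant base subterms, and $\tau$ is reached iff none of the bases $\taucp{j,1}$ for $n<j\le l$ drops strictly below $\tau$; otherwise the walk descends past level $n$ without ever producing $\tau$. This is exactly condition~(2).

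For $(2)\Leftrightarrow(3)$, I proceed by induction on $l-n$, unfolding $\ov(\bevec)$ layer by layer via the clauses of Definition~\ref{odef}. The base case $l=n$ reduces to $\tauti\mid\al$ for an ordinal whose tracking chain ends at $(n,m_n)$ with $m_n>1$: clauses~2--4 of Definition~\ref{odef}, applied to $\alvec$, display $\tauti$ as a left multiplicative factor in the normal form of $\al=\ov(\alvec)$, by virtue of $\tau=\taucp{n,m_n-1}$ and the definition of $\nuvalbe$ in Definition~\ref{kappanuprincipals}. For the inductive step, $(2)$ ensures that the terminal multiplicative factor of $\be=\ov(\bevec)$ is itself a value of $\ov$ on a shorter tracking chain still satisfying~(2), so the induction hypothesis supplies the divisibility. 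Conversely, the same unfolding shows that $\tauti\mid\be$ forces every base $\taucp{j,1}$ for $n<j\le l$ to be $\ge\tau$, since any violation $\taucp{j,1}<\tau$ would produce, at the corresponding stage of the $\ov$-recursion, a leading multiplicative factor strictly smaller than $\tauti$, contradicting the divisibility.

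The hardest step will be the case analysis in the induction on $l-n$: Definition~\ref{odef} branches on the value of $n_0$, on whether $\be_1\in\Ez^{>\aln}$, on whether $k>1$, and on further conditions, and each branch must be aligned with the corresponding behaviour of $\cs$ used in $(1)\Leftrightarrow(2)$. The preparatory material around Definition~\ref{sivecdefi} and Lemma~\ref{simqchilem} was devised precisely to keep this alignment tractable.
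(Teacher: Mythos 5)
First, a point of reference: this paper does not itself prove the lemma. It is quoted from \cite{W} (Lemma 5.7 there), and the proof body here consists solely of the citation, so there is no internal argument to measure yours against. That said, your decomposition --- $(1)\Leftrightarrow(2)$ through Proposition~\ref{letwopredprop}, identifying $\predecs_2(\be)$ with the entries of a characteristic sequence and asking whether the position $(n,m_n-1)$ occurs in it, and $(2)\Leftrightarrow(3)$ through the multiplicative normal form produced by Definition~\ref{odef} --- is the natural route and uses exactly the toolkit the paper assembles around this statement (Lemma 5.10 and 5.5 of \cite{CWc}, the $=_\NF$ clauses of Definition~\ref{odef}, Definition~\ref{sivecdefi}).

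However, the two load-bearing claims are asserted rather than proved, and one of them is wrong as stated. (i) In $(1)\Leftrightarrow(2)$, the claim that $\tau$ is reached by the characteristic sequence precisely when no base $\taucp{j,1}$ with $n<j\le l$ drops below $\tau$ \emph{is} the equivalence; describing $\cs$ as ``a walk back through $\bevec$'' names the phenomenon without establishing it. (ii) More seriously, your justification of $(3)\Rightarrow(2)$ --- that a violation $\taucp{j,1}<\tau$ ``would produce a leading multiplicative factor strictly smaller than $\tauti$, contradicting the divisibility'' --- is invalid as a criterion for non-divisibility. The ordinal $\tauti$ is itself an $\ov$-value and hence in general a nontrivial product $\si_1\cdot\ldots\cdot\si_m$ in multiplicative normal form; any multiple $\tauti\cdot\ga$ then has leading factor $\max$-comparable to $\si_1$, which is typically strictly smaller than $\tauti$. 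So the presence of a factor below $\tauti$ in the normal form of $\be$ is no obstruction to $\tauti\mid\be$. Divisibility here means that $\log\tauti$ is an additive left part of $\log\be$, and what actually fails when some $\taucp{j,1}<\tau$ is that the $\ov$-recursion for $\be$ re-roots at a chain strictly below the position $(n,m_n-1)$, so that $\tauti$ never accumulates as a left factor. Making that precise is exactly the case analysis through Definition~\ref{odef} that you postpone to the end; as it stands, the stated reason would not survive being written out.
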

\begin{proof} The proof is given in detail in \cite{W}.
\qed \end{proof}

Applying the elementary recursive mappings $\tc$, see Section 6 of \cite{CWc}, and $\ov$, we are now able to formulate the arithmetical 
characterization of $\Ctwo$.

\begin{cor}[5.8 of \cite{W}]\label{elemreccharcor} The structure $\Ctwo$ is characterized elementary recursively by
\begin{enumerate}
\item $(\oneinf,\le)$ is the standard ordering of the classical notation system $\oneinf=\Targ{1}\cap\Om_1$, see \cite{W07a},
\item $\al\leo\be$ if and only if $\al\le\be\le\lh(\al)$ where $\lh$ is given by equation \ref{lhequation}, and
\item $\al\letwo\be$ if and only if $\tc(\al)\subseteq\tc(\be)$ and condition 2 of Lemma \ref{letwosuclem} holds.\qed
\end{enumerate}
\end{cor}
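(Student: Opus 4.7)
The plan is to assemble the three characterizations from the preceding development and then verify elementary recursiveness of each building block; no new combinatorial argument is required, only careful bookkeeping.

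First, for part 1, the ordering on $\oneinf = \Targ{1} \cap \Om_1$ is the standard notation system ordering from \cite{W07a}, which is elementary recursive by construction. I would simply invoke this known fact together with the results of Section 6 of \cite{CWc} guaranteeing that the map $\tc\colon \oneinf \to \TC$ is a well-defined elementary recursive bijection onto the set of admissible tracking chains.

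Second, for part 2, I would combine equation (\ref{leoequation}), which gives the equivalence $\al \leo \be \iff \al \le \be \le \lh(\al)$, with equation (\ref{lhequation}), which expresses $\lh(\al) = \ov(\me(\bevec^\star))$ in terms of $\bevec := \alvec_{\restriction_{n_0,m_0}}$ for $\gbo(\alvec) = (n_0,m_0)$, handling the case $\al = \lh(\al)$ trivially. Elementary recursiveness then follows by composition: $\tc$ is elementary recursive by Section 6 of \cite{CWc}; the operators $\gbo$, $\cdot^\star$, and $\me$ act by finite combinatorial manipulation on tracking chains as in Definitions 5.2 and 7.12 of \cite{CWc}; and $\ov$ is elementary recursive by the material assembled in Subsection \ref{tsccsec} (Definition \ref{odef} together with the $\ltvec$-termination argument).

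Third, for part 3, I would invoke Lemma \ref{letwosuclem} directly: assuming $\tc(\al) \subseteq \tc(\be)$, the relation $\al \letwo \be$ is equivalent to the base-preservation condition $\tau \le \taucp{j,1}$ for $j = n+1,\ldots,l$, where $\tau := \taucp{n,m_n-1}$ and $\tauvec$ is the chain associated with $\bevec$. For the converse direction, if $\tc(\al) \not\subseteq \tc(\be)$, one has to argue that $\al \not\letwo \be$; here I would appeal to Proposition \ref{letwopredprop}, which enumerates all $\ktwo$-predecessors of $\be$ as evaluations of specific initial subchains of $\tc(\be)$, so that $\al \letwo \be$ forces $\tc(\al)$ to be an initial subchain of $\tc(\be)$. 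Testing subsethood of tracking chains and comparing a finite list of $\tauvec$-entries are manifestly elementary recursive operations.

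The main obstacle I anticipate is the edge-case analysis in part 3, specifically the cases where $m_n = 1$ (so that the assumption $m_n > 1$ of Lemma \ref{letwosuclem} fails) or where $\al$ is $\letwo$-minimal. In the first case $\al$ has a trivial $\letwo$-reach by Theorem 7.9 of \cite{CWc}, so $\al \letwo \be$ reduces to $\al = \be$, which is captured by $\tc(\al) \subseteq \tc(\be)$ with vacuous base-preservation condition; in the second case Proposition \ref{letwopredprop} gives $\predecs_2(\al) = \emptyset$, and one verifies that condition 2 of Lemma \ref{letwosuclem} is vacuously satisfied by $\al$ alone. Once these boundary situations are matched against the statement, the elementary recursive characterization is immediate from the composition of elementary recursive operations already established.
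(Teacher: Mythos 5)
Your proposal is correct and follows essentially the same route as the paper, which states this corollary with no separate argument because it is precisely the assembly of equation (\ref{leoequation}) with (\ref{lhequation}), Lemma \ref{letwosuclem}, Proposition \ref{letwopredprop}, and the elementary recursiveness of $\tc$ and $\ov$ established in Section 6 of \cite{CWc} and Section \ref{tsccsec}. Your extra attention to the boundary cases $m_n=1$ and $\letwo$-minimality is sound and consistent with Theorem 7.9 of \cite{CWc}.
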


Recall Definition 5.13 of \cite{CWc} which characterizes the standard linear ordering $\le$ on $\oneinf$
by an ordering $\letc$ on the corresponding tracking chains.
We can formulate a characterization of the relation $\leo$ (below $\oneinf$) in terms of the corresponding 
tracking chains as well. This follows from an inspection of the ordering $\letc$ in combination with the above 
statements. Let $\al,\be<\oneinf$ with tracking chains $\tc(\al)=\alvec=(\alvec_1,\ldots,\alvec_n)$, 
$\alvec_i=(\alcp{i,1},\ldots,\alcp{i,m_i})$, $1\le i\le n$, and $\tc(\be)=\bevec=(\bevec_1,\ldots,\bevec_l)$, 
$\bevec_i=(\becp{i,1},\ldots,\becp{i,k_i})$, $1\le i\le l$. 
We have $\al\leo\be$ if and only if either $\alvec\subseteq\bevec$ or there exists 
$(i,j)\in\dom(\alvec)\cap\dom(\bevec)$, $j<\min\{m_i,k_i\}$, such that 
$\alvec_{\restriction_{i,j}}=\bevec_{\restriction_{i,j}}$ and $\alcp{i,j+1}<\becp{i,j+1}$,
and we either have $\chi^\taucp{i,j}(\alcp{i,j+1})=0\andsp(i,j+1)=(n,m_n)$ or 
$\chi^\taucp{i,j}(\alcp{i,j+1})=1\andsp\al\leo\ov(\me(\alvec_{\restriction_{i,j+1}}))\lo\be$.
Iterating this argument and recalling Lemma 5.5 of \cite{CWc} we obtain the following paramount

\begin{prop}[5.10 of \cite{W}]\label{gbocharprop} Let $\al$ and $\be$ with tracking chains $\alvec$ and $\bevec$, respectively, as above.
We have $\al\leo\be$ if and only if either $\alvec\subseteq\bevec$ or
there exists the $\klex$-increasing chain of index pairs $(i_1,j_1+1),\ldots,(i_s,j_s+1)\in\dom(\alvec)$ of maximal length $s\ge 1$ where $j_r\ge 1$ for $r=1,\ldots,s$,
such that $(i_1,j_1+1)\in\dom(\bevec)$, $\alvec_{\restriction_{i_1,j_1}}=\bevec_{\restriction_{i_1,j_1}}$, $\alcp{i_1,j_1+1}<\becp{i_1,j_1+1}$,
\[\alvec_{\restriction_{i_s,j_s+1}}\subseteq\alvec\subseteq\me(\alvec_{\restriction_{i_s,j_s+1}}),\]
and $\chi^\taucp{i_r,j_r}(\alcp{i_r,j_r+1})=1$ at least whenever $(i_r,j_r+1)\not=(n,m_n)$. 
Setting $\al_r:=\ov(\alvec_{\restriction_{i_r,j_r+1}})$ for $r=1,\ldots,s$ as well as $\al^+_r:=\ov(\me(\alvec_{\restriction_{i_r,j_r+1}}))$ 
for $r$ such that $\chi^\taucp{i_r,j_r}(\alcp{i_r,j_r+1})=1$ and $\al^+_s:=\al$ if $\chi^\taucp{i_s,j_s}(\alcp{i_s,j_s+1})=0$ we have
\[\al_1\ktwo\ldots\ktwo\al_s\letwo\al\leo\al^+_s\lo\ldots\lo\al^+_1\lo\ov(\bevec_{\restriction_{i_1,j_1+1}})\leo\be.\]
For $\be=\lh(\al)$ the cases $\alvec\subseteq\bevec$ and $s=1$ with $(i_1,j_1+1)=(n,m_n)$ correspond to the situation
$\gbo(\alvec)=(n,m_n)$, while otherwise we have $\gbo(\alvec)=(i_1,j_1+1)$.\qed
\end{prop}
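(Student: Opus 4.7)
The plan is to derive Proposition \ref{gbocharprop} by an iterative inspection of the two tracking chains, using the $\letc$-ordering from Definition 5.13 of \cite{CWc}, the formulas for $\lh$ in equation \ref{lhequation} and for $\leo$ in equation \ref{leoequation}, and the characterization of $\letwo$ in Lemma \ref{letwosuclem}. The paragraph immediately preceding the statement already sketches one step of this iteration; the task is to make the iteration precise and to verify it produces exactly the interleaved chain asserted.

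First I would dispose of the case $\alvec\subseteq\bevec$: here $\al\le\be$ follows immediately from $\letc$, and the bound $\be\le\lh(\al)$ is read off from equation \ref{lhequation} after observing that, under the containment assumption, $\bevec$ is bounded in $\letc$ by the maximal extension of $\alvec^\star$. For the remaining case $\alvec\not\subseteq\bevec$ with $\al\leo\be$, apply Definition 5.13 of \cite{CWc} under the additional constraint $\al\le\be$ to produce the first index pair $(i_1,j_1+1)\in\dom(\alvec)\cap\dom(\bevec)$ at which $\alvec$ and $\bevec$ differ, necessarily with $\alcp{i_1,j_1+1}<\becp{i_1,j_1+1}$. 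Then split on the value of $\chi^{\taucp{i_1,j_1}}(\alcp{i_1,j_1+1})$: the value $0$ forces $(i_1,j_1+1)=(n,m_n)$ by the discussion of greatest immediate $\leo$-successors preceding equation \ref{lhequation} and closes the chain at $s=1$; the value $1$ produces the $\lo$-step from $\al$ to $\al^+_1:=\ov(\me(\alvec_{\restriction_{i_1,j_1+1}}))$, which sits strictly below $\ov(\bevec_{\restriction_{i_1,j_1+1}})$ by $\letc$ combined with equation \ref{leoequation}.

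The iteration now proceeds by searching, inside $\alvec$ beyond the shared prefix $\alvec_{\restriction_{i_1,j_1+1}}$, for the next position at which $\alvec$ disagrees with $\me(\alvec_{\restriction_{i_1,j_1+1}})$. By Lemma 5.5 of \cite{CWc} the only positions where a maximal extension can branch off from $\alvec$ are precisely those controlled by the $\chi$-values encountered along the descent through $\alvec$, so the successive critical pairs $(i_r,j_r+1)$ are automatically $\klex$-increasing. The recursion terminates at the step $s$ for which $\alvec_{\restriction_{i_s,j_s+1}}\subseteq\alvec\subseteq\me(\alvec_{\restriction_{i_s,j_s+1}})$: the $\chi$-value $1$ at $(i_s,j_s+1)$ supplies the final $\lo$-climb to $\al^+_s$, whereas the value $0$ forces the terminal identification $\al^+_s=\al$. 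The intermediate $\ktwo$-chain $\al_1\ktwo\ldots\ktwo\al_s$ together with $\al_s\letwo\al$ then follows from Lemma \ref{letwosuclem}, using that at each $r<s$ the base $\taucp{i_r,j_r}$ persists through all later components of $\alvec$, since $\alvec_{\restriction_{i_r,j_r}}$ is a common prefix of all subsequent initial chains appearing in the iteration.

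The hard part is the bookkeeping for the $\chi=1$-branches: one must verify that $\me(\alvec_{\restriction_{i_r,j_r+1}})$ really does reach past the next disagreement with $\bevec$ and that the recursion terminates precisely where the final containment $\alvec_{\restriction_{i_s,j_s+1}}\subseteq\alvec\subseteq\me(\alvec_{\restriction_{i_s,j_s+1}})$ holds. Both points use Corollary 5.6 of \cite{CWc} and the inductive structure of $\me$ from Definition 5.2 of \cite{CWc}. For the specialization $\be=\lh(\al)$, inspection of $\gbo(\alvec)=(n_0,m_0)$ via Definition 7.12 and Corollary 7.13 of \cite{CWc} shows either $\alvec\subseteq\bevec$ with $(n_0,m_0)=(n,m_n)$, so the chain is trivial, or else $\alvec\not\subseteq\bevec$ with the first critical pair coinciding with $(n_0,m_0)$, matching the expression for $\lh(\al)$ provided by equation \ref{lhequation}.
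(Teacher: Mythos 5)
Your proposal follows essentially the same route as the paper, which itself only sketches the argument in the paragraph preceding the proposition (and defers to \cite{W}): a one-step characterization of $\leo$ via $\letc$ and the $\chi$-case split on the branching index pair, iterated through successive maximal extensions, with Lemma 5.5 of \cite{CWc} controlling the $\klex$-increase of the critical pairs, Lemma \ref{letwosuclem} supplying the $\ktwo$-chain, and Corollary 7.13 of \cite{CWc} handling the $\be=\lh(\al)$ specialization. The proposal is correct and matches the intended proof.
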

\begin{rmk}[\cite{W}]\label{intermedextsteprmk}
Note that the above index pairs characterize the relevant sub-maximal $\nu$-indices in the initial chains of $\alvec$
with respect to $\bevec$ and omit the intermediate steps of maximal ($\me$-) extension along the iteration.
Using Lemma 5.5 of \cite{CWc} we observe that the sequence $\taucp{i_1,j_1},\ldots,\taucp{i_s,j_s}$ of bases in the above proposition satisfies
\begin{equation}\label{baseseqeq}
\taucp{i_1,j_1}<\ldots<\taucp{i_s,j_s}\quad\mbox{ and }\quad\taucp{i_s,j_s}<\taucp{i,1} \quad\mbox{ for every }i\in(i_s,n],
\end{equation}
so that in the case where $\al\lo\be$ and $\alvec\not\subseteq\bevec$ we have $\al\lo\gs(\al)\leo\be$ with $\taucp{i_s,j_s}\mid\rho_n\minusp1$.
\end{rmk}
\begin{lem}[5.11 of \cite{W}]\label{subresplem}
The relation $\sub$ of initial chain on $\TC$ respects the ordering $\letc$ and hence also the characterization of $\leo$ on $\TC$.
\end{lem}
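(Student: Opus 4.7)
The ordering $\letc$ on $\TC$ was defined in Definition 5.13 of \cite{CWc} so that $\ov$ is an order isomorphism between $(\TC,\letc)$ and the initial segment $(\oneinf,\le)$; hence showing that $\sub$ is a sub-relation of $\letc$ amounts to proving $\alvec\propersub\bevec\imp\ov(\alvec)<\ov(\bevec)$. The second half of the statement is then an immediate corollary of Proposition \ref{gbocharprop}, in which the case $\alvec\subseteq\bevec$ is explicitly listed as a sufficient condition for $\ov(\alvec)\leo\ov(\bevec)$, so no separate argument is needed for $\leo$ once the strict inequality for $\letc$ has been established.

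By transitivity it suffices to handle the one-step extension $\bevec=\alvec^\frown\ga$ with $\bevec\in\TC$. My plan is to run through the clauses of Definition \ref{odef} and read off the multiplicative normal form in each case. In clause 2 ($\be_1\le\aln$ appended to the last component) we have $\ov(\bevec)=_\NF\ov(\alvec)\cdot\be$, and $\ov(\alvec)$ occurs as a proper multiplicative prefix with a strictly positive factor $\be>0$. In clauses 3 and 4 the definition redirects through $\hop_{\be_1}(\cdot)$, but the resulting representation is again of the form $\ov(\cdot)\cdot\bepr$ with $\ov(\cdot)\ge\ov(\alvec)$ (since $\hop$ only modifies initial chains in ways that strictly extend or preserve the chain, cf.\ Definition \ref{hgamalbe}), and with a strictly positive multiplicative factor enforced by the admissibility conditions on extensions of tracking chains from Definition 5.1 of \cite{CWc}. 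In every branch the lexicographic comparison on $\lSeq(\bevec)$ versus $\lSeq(\alvec)$ together with the NF representation yields $\ov(\alvec)<\ov(\bevec)$ directly.

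The main obstacle, modest as it is, will be to verify in clauses 3 and 4 that the redirection through $\hop_{\be_1}(\alvec^\frown\be_1)$ does not collapse the $\ov$-value below $\ov(\alvec)$; this is precisely the content of the $\mu$- and $\la$-side conditions in Definitions \ref{hgamalbe} and \ref{odef}, and it can be read off the NF-invariance statements of Theorems 3.19 and 3.20 of \cite{W} (the mutual inversion of $\trs$ and $\ov$). Once this is done, transitivity for arbitrary proper extensions and, finally, the passage from strict $\letc$ to $\leo$ via the first disjunct of Proposition \ref{gbocharprop} complete the proof.
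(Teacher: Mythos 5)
You have misread what is being asserted. In this paper ``$R$ respects $S$'' is \emph{not} the statement that $R$ is contained in $S$; it is the interpolation property fixed in the introduction for respecting forests and restated just before the lemma for ``$\leo$ respects $\le$'': from $p\mathrel{S}q\mathrel{S}r$ and $p\mathrel{R}r$ one must conclude $p\mathrel{R}q$. So Lemma \ref{subresplem} asserts: if $\alvec\letc\bevec\letc\gavec$ and $\alvec\sub\gavec$, then $\alvec\sub\bevec$; and, because the tracking-chain characterization of $\leo$ is contained in $\letc$, the same interpolation property holds with $\leo$ in place of $\letc$ (this is what the ``hence'' refers to). That reading is confirmed by every later use of the lemma, e.g.\ in Case A of the proof of Theorem \ref{closedminitheo}, where $\alvecpr\sub\tc(\lh(\be))$ and $\ov(\alvecpr)\le\al<\ga\le\lh(\be)$ are used to conclude $\alvecpr\propersub\gavec$, and in the proof of Claim \ref{extensionpartclaim} to obtain $V_1\ktc V_2$. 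What your proposal establishes instead --- that a proper initial chain has strictly smaller $\ov$-value (i.e.\ $\sub\,\subseteq\,\letc$) and that $\alvec\sub\bevec$ implies $\ov(\alvec)\leo\ov(\bevec)$ --- is true, but it is already part of the cited background (evaluations of initial chains form a $\lo$-chain; the first disjunct in the characterization preceding Proposition \ref{gbocharprop}) and it does not imply the interpolation property.

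The argument actually required goes through the explicit description of $\letc$ and the branching analysis underlying Proposition \ref{gbocharprop}: assume $\alvec\sub\gavec$, $\alvec\letc\bevec\letc\gavec$, and suppose $\alvec\not\sub\bevec$. Since $\bevec$ cannot be a proper initial chain of $\alvec$ (that would force $\bevec\ktc\alvec$ by the monotonicity you proved), the two chains first differ at some common index pair $(i,j+1)$ with $\alvec_{\restriction_{i,j}}=\bevec_{\restriction_{i,j}}$, and $\alvec\letc\bevec$ forces $\alcp{i,j+1}<\becp{i,j+1}$ there. But $\gavec$ extends $\alvec$ and hence carries the same index $\alcp{i,j+1}$ at that position, which yields $\gavec\ktc\bevec$, contradicting $\bevec\letc\gavec$. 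None of this appears in your proposal, so as written it does not prove the lemma.
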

\begin{proof} This is a consequence of the above Proposition \ref{gbocharprop}. For details see \cite{W}.
\qed \end{proof}

While it is easy to observe that in $\Rtwo$ the relation $\leo$ is a forest that respects $\le$ and the relation $\letwo$ is a forest contained in $\leo$
which respects $\leo$, we can now conclude that this also holds for the arithmetical formulations of $\leo$ and $\letwo$ in $\Ctwo$, 
without referring to the results in Section 7 of \cite{CWc}.  

\begin{cor}[5.14 of \cite{W}] Consider the arithmetical characterizations of $\leo$ and $\letwo$ on $\oneinf$.
The relation $\letwo$ respects $\leo$, i.e.\ whenever $\al\leo\be\leo\ga<\oneinf$ and $\al\letwo\ga$, then $\al\letwo\be$.
\end{cor}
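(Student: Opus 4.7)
The plan is to derive the two conditions characterizing $\al\letwo\be$ in Corollary \ref{elemreccharcor}---namely $\tc(\al)\sub\tc(\be)$ together with the base condition of Lemma \ref{letwosuclem}---from the hypotheses $\al\leo\be\leo\ga$ and $\al\letwo\ga$, working entirely in the tracking-chain formalism. Write $\alvec:=\tc(\al)$ of length $n$ with terminal $(n,m_n)$, and analogously $\bevec:=\tc(\be)$ of length $l$ and $\gavec:=\tc(\ga)$ of length $l'$. Since $\al<\ga$ and $\al\letwo\ga$, $\al$ has non-trivial $\letwo$-reach, so $m_n>1$ and $\taucp{n,m_n}>1$; set $\tau:=\taucp{n,m_n-1}$. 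From the hypothesis we have $\alvec\sub\gavec$ and $\tau\le\gacp{j,1}$ for every $j$ with $n<j\le l'$.

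The first step is to establish $\alvec\sub\bevec$. If this fails, Proposition \ref{gbocharprop} applied to $\al\leo\be$ yields a first-difference index $(i_1,j_1+1)\in\dom(\alvec)\cap\dom(\bevec)$ with $\alvec_{\restriction_{i_1,j_1}}=\bevec_{\restriction_{i_1,j_1}}$ and $\alcp{i_1,j_1+1}<\becp{i_1,j_1+1}$. Because $\alvec\sub\gavec$, this position also lies in $\dom(\gavec)$ with $\gacp{i_1,j_1+1}=\alcp{i_1,j_1+1}$. Hence $\bevec$ and $\gavec$ agree on the common initial chain $\gavec_{\restriction_{i_1,j_1}}$ while $\bevec$ strictly exceeds $\gavec$ at the subsequent position; by the first-difference form of $\letc$ in Definition 5.13 of \cite{CWc} (compatible with Lemma \ref{subresplem}) this forces $\ga<\be$, contradicting the fact that $\be\leo\ga$ implies $\be\le\ga$.

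With $\alvec\sub\bevec$ in hand I verify the base condition for $\bevec$. Applying Proposition \ref{gbocharprop} to $\be\leo\ga$, if $\bevec\sub\gavec$ then every row $j$ of $\bevec$ with $n<j\le l$ is a row of $\gavec$ with identical starting entry $\becp{j,1}=\gacp{j,1}\ge\tau$, and the base condition transfers. Otherwise the chain-structure case produces $(i_1,j_1+1)\in\dom(\bevec)\cap\dom(\gavec)$ with $j_1\ge1$, $\bevec_{\restriction_{i_1,j_1}}=\gavec_{\restriction_{i_1,j_1}}$ and $\becp{i_1,j_1+1}<\gacp{i_1,j_1+1}$. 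The sub-case $(i_1,j_1+1)\in\dom(\alvec)$ is immediately contradictory, since $\alvec\sub\bevec$ and $\alvec\sub\gavec$ jointly pin $\becp{i_1,j_1+1}=\alcp{i_1,j_1+1}=\gacp{i_1,j_1+1}$.

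The main obstacle is the remaining sub-case, in which $(i_1,j_1+1)$ lies strictly beyond the terminal $(n,m_n)$ of $\alvec$. The row-starts $\becp{j,1}$ with $n<j\le i_1$ coincide with the corresponding $\gacp{j,1}\ge\tau$ via the shared initial chain $\bevec_{\restriction_{i_1,j_1}}=\gavec_{\restriction_{i_1,j_1}}$, so nothing has to be checked for them. The remaining rows of $\bevec$ arise only through the inclusion $\bevec\sub\me(\bevec_{\restriction_{i_s,j_s+1}})$ furnished by Proposition \ref{gbocharprop}. Because every chain index $(i_r,j_r+1)$ is a $\nu$-position ($j_r\ge1$) and, except possibly at the terminal pair, satisfies $\chi^{\taucp{i_r,j_r}}(\becp{i_r,j_r+1})=1$, an unfolding of the $\me$-clauses in Definition 5.2 of \cite{CWc} shows that each row-start newly produced by $\me$ is governed by an already-present base $\taucp{k,1}$ of $\bevec$ (itself $\ge\tau$) rather than by a fresh smaller index. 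This $\me$-preservation analysis, combined with the $\chi$-constraint built into Proposition \ref{gbocharprop}, is the technical heart of the argument and is where the detailed bookkeeping needs to be concentrated.
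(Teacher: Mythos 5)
Your route is the paper's own: pass to the tracking-chain characterization of $\letwo$ (Corollary \ref{elemreccharcor}), dispose of the case $\bevec\sub\gavec$ via Lemma \ref{letwosuclem}, and attack the branching case with Proposition \ref{gbocharprop}. The first half of your argument is correct and complete: $\alvec\sub\bevec$ does follow by the first-difference argument you give (a branching pair of $\alvec$ and $\bevec$ would sit inside $\dom(\gavec)$ and force $\gavec\ktc\bevec$, hence $\ga<\be$), and a branching pair of $\bevec$ and $\gavec$ lying inside $\dom(\alvec)$ is indeed impossible.

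However, the step you label ``the technical heart'' and defer is the entire content of the corollary in the nontrivial case, and the mechanism you sketch for it is not quite right. First, you only check the row-starts $\taucp{j,1}$ for $n<j\le i_1$; to propagate anything past row $i_1$ you need $\taucp{i_1,j_1}\ge\tau$, and $(i_1,j_1)$ is in general a $\nu$-position with $j_1>1$, not a row-start. This does hold --- in the remaining sub-case one has $(n,m_n-1)\kglex(i_1,j_1)$ and the associated bases strictly increase along a row --- but it has to be recorded. Second, your claim that every row-start newly produced by $\me$ is ``governed by an already-present base $\taucp{k,1}$ of $\bevec$'' is not the correct invariant: a maximal extension may introduce rows whose starting bases undercut earlier row-starts; what it cannot undercut is the base $\taucp{i_r,j_r}$ of the $\nu$-position being unfolded. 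Moreover, no fresh unfolding of Definition 5.2 of \cite{CWc} is needed: Proposition \ref{gbocharprop} applied to $\be\leo\ga$ already supplies the chain $\ov(\bevec_{\restriction_{i_1,j_1+1}})\ktwo\ldots\ktwo\ov(\bevec_{\restriction_{i_s,j_s+1}})\letwo\be$, and property (\ref{baseseqeq}) of Remark \ref{intermedextsteprmk} records exactly the needed inequalities, namely $\taucp{i_1,j_1}<\ldots<\taucp{i_s,j_s}$ and $\taucp{i_s,j_s}<\taucp{i,1}$ for all $i\in(i_s,l]$; applying condition 2 of Lemma \ref{letwosuclem} to each link of that $\letwo$-chain also covers the rows in $(i_1,i_s]$. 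Combined with $\taucp{i_1,j_1}\ge\tau$ this yields $\taucp{i,1}\ge\tau$ for every $i\in(n,l]$, which is condition 2 of Lemma \ref{letwosuclem} for the pair $\al,\be$. That is precisely the paper's one-line proof (``employ Proposition \ref{gbocharprop} and property \ref{baseseqeq}''), so the missing piece is not new bookkeeping but the citation and assembly of facts already established.
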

\begin{proof} In the case $\bevec\subseteq\gavec$ this directly follows from Lemma \ref{letwosuclem}, 
while otherwise we additionally employ Proposition \ref{gbocharprop} and property \ref{baseseqeq}.
\qed \end{proof}

We conclude this section with a characterization of $\lh_2$ based on Proposition \ref{letwosuclem} that is not given in \cite{W}.
In short, the following proposition formalizes the procedure of extending $\alvec$ stepwise maximally by tracking chains with the 
modification that extending $\ka$-indices must be proper multiples of $\taupr$ as specified below. 

\begin{prop} Let $\al<\oneinf$ and write $\tc(\al)=\alvec=(\alvec_1,\ldots,\alvec_n)$, 
$\alvec_i=(\alcp{i,1},\ldots,\alcp{i,m_i})$, $1\le i\le n$, with associated chain $\tauvec$. 
If $m_n=1$ or $\taucp{n,m_n}=1$, we have $\lh_2(\al)=\al$ as this is the
already mentioned characterization of trivial $\letwo$-reach. \index{$\lh$!$\lh_2$}
Now suppose that $m_n>1$ and $\tau:=\taucp{n,m_n}>1$, set $\taupr:=\taucp{n,m_n-1}$.
\\[2mm]
{\bf Case 1:} $\chi^\taupr(\tau)=0$. Let $\bevec:=\alvec$ and 
$\bevecpl:={\alvec_{\restriction_{n-1}}}^\frown(\alcp{n,1},\ldots,\alcp{n,m_n},\mu_\tau)$ if $\tau\in\Ez^{>\taupr}$, otherwise
$\bevecpl:=\alvec^\frown(\rhoargs{\taupr}{\tau})$.
\\[2mm]
{\bf Case 2:} $\chi^\taupr(\tau)=1$. Define $\bevec:=\me(\alvec)$, 
extending $\alvec$ by vectors $\bevec_i=(\becp{i,1},\ldots,\becp{i,k_i})$, $1\le i\le l$, and
accordingly for the associated chain $\tauvec$. 
According to Corollary 5.6 of \cite{CWc} the extending $\ka$-index of $\ec(\bevec)$ is of the
form $\xi+\taupr$ for suitable minimal $\xi\ge 0$. $\lh_2(\al)$ is then equal to $\ov(\alvec[\alcp{n,m_n}+1])\minusp\tautipr$. 
If $\xi=0$, we have $\lh_2(\al)=\ov(\bevec)$,
otherwise define $\bevecpl:=\bevec^\frown(\xi)$ if this satisfies condition 5
of Definition 5.1 of \cite{CWc} while $\bevecpl:={\bevec_{\restriction_{l-1}}}^\frown(\becp{l,1},\ldots,\becp{l,k_l},\mu_\taucp{l,k_l})$
otherwise.
\\[2mm]
In case it is defined, starting from $\bevecpl$ we iterate the following procedure. Let $\gavec$ be the tracking chain reached so far.
Consider the maximal $1$-step extension $\gavec^+$ of $\gavec$. If this adds a $\nu$-index, continue with $\gavec^+$.
Otherwise let $\eta=\etapr+\eta_0$ be the extending $\ka$-index, where $\taupr\mid\etapr$ and $\eta_0<\taupr$.
If $\etapr=0$, we have $\lh_2(\al)=\ov(\gavec)$. Now suppose that $\etapr>0$.
If $\gavec^\frown(\etapr)\in\TC$, continue with that tracking chain, otherwise $\gavec$ is of a form 
$\gavecpr^\frown(\gacp{h,1},\ldots,\gacp{h,r})$ with associated chain $\sivec$ that satisfies $\si:=\sicp{h,r}\in\Ez^{>\sipr_h}$,
and we continue with $\gavecpr^\frown(\gacp{h,1},\ldots,\gacp{h,r},\mu_\si)$. \qed   
\end{prop}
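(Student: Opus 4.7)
The plan is to reduce the problem, via Lemma \ref{letwosuclem}, to a maximization of $\ov(\bevec)$ over $\bevec\in\TC$ with $\alvec\sub\bevec$ subject to $\taupr\le\taucp{j,1}$ for every $j>n$ at which a new vector is opened in $\bevec$; equivalently, $\tautipr\mid\ov(\bevec)$. In the trivial case $m_n=1\veesp\tau=1$, the characterization of non-trivial $\letwo$-reach (from Theorem 7.9 of \cite{CWc}, recalled in the preliminaries) yields that $\al$ has no proper $\ktwo$-successor, whence $\lh_2(\al)=\al$ by definition.

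For the non-trivial case I would split according to $\chi^\taupr(\tau)$. In Case 1 ($\chi^\taupr(\tau)=0$), the interpretation of $\chi$ from Definition 3.1 of \cite{CWc} together with Lemma \ref{simqchilem} forces the first admissible extension: if $\tau\in\Ez^{>\taupr}$ then the current vector is extended internally by the $\nu$-index $\mu_\tau$, otherwise a new vector must be opened with the minimal base $\rhoargs{\taupr}{\tau}$ guaranteeing $\taupr$-divisibility. In Case 2 ($\chi^\taupr(\tau)=1$), Corollary 5.6 of \cite{CWc} exhibits the extending $\ka$-index of $\ec(\me(\alvec))$ in the form $\xi+\taupr$ with minimal $\xi\ge 0$; for $\xi=0$ the chain $\me(\alvec)$ is already optimal in this step and $\lh_2(\al)=\ov(\bevec)$, and for $\xi>0$ one peels off the $\xi$-part, adopting the $\mu_{\taucp{l,k_l}}$-fallback when extension by $(\xi)$ would violate condition 5 of Definition 5.1 of \cite{CWc}, in full analogy with clause 2.2.1 of Definition 5.2 of \cite{CWc}. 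The closed-form identity $\lh_2(\al)=\ov(\alvec[\alcp{n,m_n}+1])\minusp\tautipr$ in Case 2 would be derived by invoking Proposition \ref{gbocharprop} together with Corollary 6.6 of \cite{CWc} to relate $\lh_2$ to the next $\leo$-component above $\al$.

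The iteration is controlled by the invariant that the chain $\gavec$ built so far satisfies $\alvec\sub\gavec\in\TC$ and that every $\ka$-index opening a new vector in $\gavec$ is a multiple of $\taupr$. A $\nu$-step is always safe, since it does not open a new vector. A $\ka$-step with extending index $\eta=\etapr+\eta_0$, $\taupr\mid\etapr\andsp\eta_0<\taupr$, must halt at $\etapr=0$, because any further $\ka$-index would then be strictly less than $\taupr$ and violate $\tautipr$-divisibility, and otherwise continues with $\etapr$ (or with its $\mu_\si$-fallback, precisely when condition 5 of Definition 5.1 of \cite{CWc} blocks direct extension). Termination follows from strict $\ltvec$-decrease at each extension, applying the estimates (\ref{loglen}) and (\ref{lalen}). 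The principal obstacle will be maximality: one must exclude that some alternative $\bevecpr\in\TC$ with $\alvec\sub\bevecpr$ and $\taupr$-divisibility at every new vector is $\letc$-strictly larger than the chain produced by the procedure. This is resolved by Proposition \ref{gbocharprop} together with Lemma \ref{subresplem}, which force any $\letc$-larger competitor to disagree with our construction at some $\ka$-position where the procedure's choice is already the greatest index compatible with $\taupr$-divisibility.
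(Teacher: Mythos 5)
Your proposal is correct and takes essentially the approach the paper intends: the proposition is stated there without a separate proof, being presented as the formalization of stepwise maximal extension constrained by $\taupr$-divisibility of newly opened $\ka$-indices, justified exactly by Lemma \ref{letwosuclem}, Corollary 5.6 of \cite{CWc}, and the $\ltvec$-termination observation, all of which you invoke. (One cosmetic point: $\varrho^{\taupr}_{\tau}$ in Case 1 is the largest admissible extending $\ka$-index for unfolding the minor $\letwo$-component, not a ``minimal base''.)
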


\section{Spanning and closed sets of tracking chains}\label{spcltcsec}

The notion of closedness for sets of tracking chains is central to the investigation
of the core of $\Rtwo$, as it is crucial for isominimal realization.
In preparation for a relativized notion of closedness, we will first introduce sets of tracking chains that are spanning above some 
given tracking chain $\alvec$, considerably extending sets of tracking chains that are weakly spanning above $\alvec$ according to 
Definition 5.3 of \cite{W}.
For the reader's convenience, we begin with a review of the preparations made in \cite{W}, Subsections 5.2 and 5.3, which provide 
a generalization of the notion of maximal extension $\me$.

\begin{defi}[Pre-closedness, Def.\ 5.1 of \cite{W}]\label{precldefi} Let $M\finsub\TC$. $M$ is \emph{pre-closed}\index{pre-closed} if and only if $M$ 
\begin{enumerate}
\item is \emph{closed under initial chains:} if $\alvec\in M$ and $(i,j)\in\dom(\alvec)$ then $\alvec_{\restriction_{(i,j)}}\in M$,
\item is \emph{$\nu$-index closed:} if $\alvec\in M$, $m_n>1$, $\alcp{n,m_n}=_\ANF\xi_1+\ldots+\xi_k$ then
$\alvec[\mu_{\taupr}], \alvec[\xi_1+\ldots+\xi_l]\in M$ for $1\le l\le k$,
\item \emph{unfolds minor $\letwo$-components:} if $\alvec\in M$, $m_n>1$, and $\tau<\mu_\taupr$ then: 
\begin{enumerate}
\item[3.1.] ${\alvec_{\restriction_{n-1}}}^\frown(\alcp{n,1},\ldots,\alcp{n,m_n},\mu_\tau)\in M$ in the case $\tau\in\Ez^{>\taupr}$, and
\item[3.2.] $\alvec^\frown(\varrho^\taupr_\tau)\in M$ otherwise, provided that $\varrho^\taupr_\tau>0$, 
\end{enumerate}
\item is \emph{$\ka$-index closed:} if $\alvec\in M$, $m_n=1$, and $\alcp{n,1}=_\ANF\xi_1+\ldots+\xi_k$, then:
\begin{enumerate}
\item[4.1.] ${\alvec_{\restriction_{n-2}}}^\frown(\alcp{n-1,1},\ldots,\alcp{n-1,m_{n-1}},\mu_{\xi_1})\in M$ in the case 
$m_{n-1}>1$ $\andsp$ $\xi_1=\taucp{n-1,m_{n-1}}\in\Ez^{>\taucp{n-1,m_{n-1}-1}}$, while otherwise  
${\alvec_{\restriction_{n-1}}}^\frown(\xi_1)\in M$, and
\item[4.2.] ${\alvec_{\restriction_{n-1}}}^\frown(\xi_1+\ldots+\xi_l)\in M$ for $l=2,\ldots,k$,
\end{enumerate}
\item \emph{maximizes $\me$-$\mu$-chains:} if $\alvec\in M$, $m_n\ge 1$, and $\tau\in\Ez^{>\taupr}$, then:
\begin{enumerate}
\item[5.1.] ${\alvec_{\restriction_{n-1}}}^\frown(\alcp{n,1},\mu_\tau)\in M$ if $m_n=1$, and
\item[5.2.] ${\alvec_{\restriction_{n-1}}}^\frown(\alcp{n,1}\ldots,\alcp{n,m_n},\mu_\tau)\in M$ if $m_n>1$ $\andsp$ $\tau=\mu_\taupr=\la_\taupr$. 
\end{enumerate}
\end{enumerate}
\end{defi}
\begin{rmk}[\cite{W}]\label{intermedmurmk}
Pre-closure of some $M\finsub\TC$ is obtained by closing under clauses 1 -- 5 in this order once, hence finite: 
in clause 5 note 
that $\mu$-chains are finite since the $\htarg{}$-measure of terms strictly decreases with each application of $\mu$. Note further that intermediate indices
are of the form $\la_\taupr$, whence we have a decreasing $\operatorname{l}$-measure according to inequality \ref{lalen} following Definition \ref{Ttauvec}.
\end{rmk}
\begin{defi}[Spanning sets of tracking chains, corrected Def.\ 5.2 of \cite{W}]\label{spanningdefi} 
$M\finsub\TC$ is \emph{spanning}\index{spanning} if and only if it is pre-closed and closed under
\begin{enumerate}
\item[6.] \emph{unfolding of $\leo$-components:} for $\alvec\in M$, if $m_n=1$ and $\tau\not\in\Ez^{\ge\taupr}$ 
(i.e.\ $\tau=\taucp{n,m_n}\not\in\Ezone$, $\taupr=\taunstar$), let
\[\log((1/\taupr)\cdot\tau)=_\ANF\xi_1+\ldots+\xi_k,\]
if otherwise $m_n>1$ and $\tau=\mu_\taupr$ such that $\tau<\la_\taupr$ in the case $\tau\in\Ez^{>\taupr}$, let
\[\la_\taupr=_\ANF\xi_1+\ldots+\xi_k.\]
Set $\xi:=\xi_1+\ldots+\xi_k$, unless $\xi>0$ and $\alvec^\frown(\xi_1+\ldots+\xi_k)\not\in\TC$,
\footnote{This is the case if clause 6 of Def.\ 5.1 of \cite{CWc} does not hold.} 
in which case we set $\xi:=\xi_1+\ldots+\xi_{k-1}$.
Suppose that $\xi>0$. Let $\alvecpl$ denote the vector $\{\alvec^\frown(\xi)\}$ if this is a tracking chain, 
or otherwise the vector ${\alvec_{\restriction_{n-1}}}^\frown(\alcp{n,1},\ldots,\alcp{n,m_n},\mu_\taucp{n,m_n})$.
\footnote{This case distinction, due to clause 5 of Def.\ 5.1 of \cite{CWc}, is missing in \cite{W}.}
Then the closure of $\{\alvecpl\}$ under clauses 4 and 5 is contained in $M$.
\end{enumerate}
\end{defi}
\begin{rmk}[\cite{W}]
Closure of some $M\finsub\TC$ under clauses 1 -- 6 is a finite process since pre-closure is finite and since 
the $\ka$-indices added in clause 6 strictly decrease in $\operatorname{l}$-measure.
Semantically, the above notion of spanning sets of tracking chains and closure under clauses 1 -- 6 leaves 
some redundancy in the form that certain
$\ka$-indices could be omitted. This will be addressed later, since the current formulation is advantageous
for technical reasons.
\end{rmk}
\begin{defi}[Relativization, Def.\ 5.3 of \cite{W}]\label{reldefi} Let $\alvec\in\TC\cup\{()\}$ and $M\finsub\TC$ be a set of tracking chains
that properly extend $\alvec$.
$M$ is \emph{pre-closed above $\alvec$}\index{pre-closed!pre-closed above $\alvec$} if and only if it is pre-closed 
with the modification that clauses 1 -- 5 only apply when the respective resulting tracking chains $\bevec$ 
properly extend $\alvec$.
$M$ is \emph{weakly spanning above $\alvec$}\index{spanning!weakly spanning above $\alvec$} if and only if $M$ is pre-closed above 
$\alvec$ and closed under clause 6. 
\end{defi}

\begin{lem}[5.4 of \cite{W}]\label{meclosurelem} If $M$ is spanning (weakly spanning above some $\alvec$), then it is closed under $\me$
(closed under $\me$ for proper extensions of $\alvec$).
\end{lem}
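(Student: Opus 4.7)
The plan is to show that for every $\bevec \in M$, iterated one-step maximal extension applied to $\bevec$ yields tracking chains that remain in $M$ until stabilization at $\me(\bevec)$. I would induct on the number of remaining steps, measured by the $\ltvec$-complexity of the next extending index (Definition \ref{Ttauvec}), exactly as in the termination argument for $\me$ recalled in the remark before Definition \ref{sivecdefi}: the initial $\mu$-chain step costs no $\ltvec$-measure but is fired at most once, and every subsequent one-step maximal extension produces an index of strictly smaller $\ltvec$-measure by inequalities (\ref{barlen})--(\ref{lalen}).

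The core of the proof is a case analysis matching each shape of a single maximal extension step, as stipulated by Definition 5.2 of \cite{CWc}, against the closure clauses 2--6 of Definitions \ref{precldefi} and \ref{spanningdefi}. Concretely, from $\bevec \in M$ with associated chain $\tauvec$ and $\tau:=\taucp{n,m_n}$, $\taupr:=\taucp{n,(m_n)'}$: the appending of a new $\nu$-index $\mu_\tau$ (in the case $\tau\in\Ez^{>\taupr}$) lands in $M$ by clause 5; the appending of a $\nu$-index $\varrho^\taupr_\tau$ (when $\tau<\mu_\taupr$) by clause 3; intermediate $\nu$-components of an ANF-decomposed index by clause 2; the unfolding of a $\leo$-component via $\log((1/\taupr)\cdot\tau)$ or $\la_\taupr$ by clause 6, after which the resulting $\alvecpl$ triggers the $\ka$-index closure and $\me$-$\mu$-chain maximization of clauses 4 and 5. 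The induction hypothesis, applied to the resulting chain (which has strictly smaller remaining $\ltvec$-measure, or terminates), yields $\me(\bevec) \in M$. For the relativized version, one restricts attention to extensions that properly extend $\alvec$, as in Definition \ref{reldefi}; the same case analysis applies verbatim because every one-step extension of a chain properly extending $\alvec$ again properly extends $\alvec$.

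The main obstacle will be the case in clause 6 where the candidate $\alvec^\frown(\xi_1+\ldots+\xi_k)$ fails to be a tracking chain (violating condition 6 of Definition 5.1 of \cite{CWc}), forcing either a truncation to $\xi_1+\ldots+\xi_{k-1}$ or replacement by ${\alvec_{\restriction_{n-1}}}^\frown(\alcp{n,1},\ldots,\alcp{n,m_n},\mu_\taucp{n,m_n})$. One must verify that the alternative produced by $\me$ in these situations is precisely the vector $\alvecpl$ prescribed in clause 6, and that its subsequent maximal extension is already captured by the closure under clauses 4 and 5. The finiteness of the associated $\mu$-chains (Remark after Definition \ref{precldefi}) and the strict decrease of the $\ltvec$-measure along $\log$, $\bardot$, and $\latal$ operations together ensure that the induction terminates, completing the proof.
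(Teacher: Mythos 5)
Your proposal is correct and is essentially the paper's argument: the paper's own proof of Lemma \ref{meclosurelem} is the one-line assertion that it ``follows directly from the definitions involved,'' and your case analysis matching each step of Definition 5.2 of \cite{CWc} against clauses 2--6, together with the $\ltvec$/$\mu$-chain termination argument already recorded in the remarks around Definitions \ref{precldefi}--\ref{spanningdefi}, is exactly the unpacking that assertion presupposes.
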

\begin{proof} This follows directly from the definitions involved. \qed \end{proof}

\noindent On the basis of Lemma \ref{meclosurelem}, Equation \ref{lhequation} has the following
\begin{cor}[5.5 of \cite{W}]\label{leoclosurecor} Let $M\finsub\TC$ be spanning (weakly spanning above some $\alvec\in\TC$) and $\bevec\in M$, 
$\be:=\ov(\bevec)$.
Then \[\tc(\lh(\be))\in M,\] provided that $\ov(\bevec_{\restriction_{\gbo(\bevec)}})$ is a proper extension of $\alvec$
in the case that $M$ is weakly spanning above $\alvec$.
\qed
\end{cor}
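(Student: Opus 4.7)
The plan is to invoke equation \ref{lhequation} together with Corollary 7.13 of \cite{CWc}. Writing $\be = \ov(\bevec)$ and setting $(n_0, m_0) := \gbo(\bevec)$ and $\gavec := \bevec_{\restriction_{(n_0, m_0)}}$, one obtains $\tc(\lh(\be)) = \me(\gavec^\star)$. It therefore suffices to show that $\gavec^\star \in M$ and then to apply Lemma \ref{meclosurelem} (in its relativized version in the weakly spanning case).

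First, clause 1 of Definition \ref{precldefi} (closure under initial chains) gives $\gavec \in M$ directly from $\bevec \in M$. Next, I would split according to whether $m_0 = 1$ or $m_0 > 1$. In the former case $\gavec^\star = \gavec$ by the definition of $\star$ recalled just after equation \ref{lhequation}, so $\gavec^\star \in M$ immediately. In the latter case $\gavec^\star = \gavec[\mu_{\taupr}]$ with $\taupr = \taucp{n_0, m_0 - 1}$, and this vector belongs to $M$ by clause 2 of Definition \ref{precldefi} ($\nu$-index closure). Lemma \ref{meclosurelem} then yields $\me(\gavec^\star) = \tc(\lh(\be)) \in M$, finishing the spanning case.

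For the weakly spanning case, the proviso that $\ov(\gavec)$ be a proper extension of $\alvec$ is exactly what is needed to activate the relativized versions of clauses 1 and 2: both $\gavec$ and $\gavec^\star$ have $\gavec$ as an initial chain and hence properly extend $\alvec$, so these clauses apply above $\alvec$ in the sense of Definition \ref{reldefi}. The relativized statement of Lemma \ref{meclosurelem} then delivers $\me(\gavec^\star) \in M$, noting that $\gavec^\star$ itself properly extends $\alvec$.

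I do not anticipate a substantial obstacle. The argument simply matches the three constituents in the expression $\me(\gavec^\star)$ against three specific closure clauses (initial chains, $\nu$-index closure, and $\me$-closure from Lemma \ref{meclosurelem}), and the bookkeeping of proper extensions in the relativized setting is immediate since each operation only refines the terminal block of a tracking chain already known to properly extend $\alvec$.
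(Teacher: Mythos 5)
Your proof is correct and is essentially the paper's own argument, which simply cites equation \ref{lhequation} and Lemma \ref{meclosurelem}; you have filled in exactly the intended bookkeeping (clause 1 for $\gavec\in M$, clause 2 for $\gavec^\star\in M$ when $m_0>1$, then $\me$-closure). One cosmetic point: when $m_0>1$, $\gavec^\star=\gavec[\mu_{\taupr}]$ does \emph{not} have $\gavec$ as an initial chain (it replaces the terminal $\nu$-index), but it still properly extends $\alvec$ because the proviso forces $\alvec$ to be an initial chain of $\gavec$ strictly below the index pair $(n_0,m_0)$, and such initial chains are unaffected by the $[\mu_{\taupr}]$ modification, which is all the relativized clauses require.
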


\begin{cor}[5.9 of \cite{W}]\label{lhtwoclscor}
Let $M\finsub\TC$ be spanning (weakly spanning above some $\alvec\in\TC$). Then $M$ is closed under $\lh_2$.
\end{cor}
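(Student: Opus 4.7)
The plan is to trace the algorithm in the proposition characterizing $\lh_2$ and to verify, step by step, that each intermediate tracking chain produced in that algorithm is forced into $M$ by one of the closure clauses 1--6 of Definitions \ref{precldefi} and \ref{spanningdefi}. Fix $\bevec\in M$ and set $\be:=\ov(\bevec)$. If $m_n=1$ or $\taucp{n,m_n}=1$, then $\lh_2(\be)=\be$, so $\tc(\lh_2(\be))=\bevec\in M$ and there is nothing to prove. Assume from now on that $m_n>1$ and $\tau:=\taucp{n,m_n}>1$, and set $\taupr:=\taucp{n,m_n-1}$.

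The first step is to produce, inside $M$, the vector $\bevecpl$ appearing as the starting point of the iteration. In Case 1 ($\chi^\taupr(\tau)=0$) this vector is exactly the one delivered by clause~3 (unfolding a minor $\letwo$-component): subclause 3.1 handles $\tau\in\Ez^{>\taupr}$ and subclause 3.2 handles the remaining subcase. In Case 2 ($\chi^\taupr(\tau)=1$) we first invoke Lemma \ref{meclosurelem} to deduce $\me(\bevec)\in M$, so that the extending $\ka$-index of $\ec(\me(\bevec))$, written $\xi+\taupr$, is available. If $\xi=0$ we already have $\tc(\lh_2(\be))=\me(\bevec)\in M$ and we are done; otherwise clause~4 ($\ka$-index closure) together with clause~5 ($\me$-$\mu$-maximization) provides the required $\bevecpl$, the case distinction between adjoining $\xi$ directly and inserting a $\mu$-index being precisely the one built into clauses 4.1 and 5.

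From $\bevecpl$ one now iterates: given $\gavec\in M$, form the maximal $1$-step extension $\gavec^+$. If it appends a $\nu$-index, then $\gavec^+\in M$ by the pre-closedness clauses 1--3 applied to any member of $M$ that already contains $\gavec$ as an initial chain, and in fact by Lemma \ref{meclosurelem} used once along the iteration, so we continue with $\gavec^+$. If instead $\gavec^+$ appends a $\ka$-index $\eta=\etapr+\eta_0$ with $\taupr\mid\etapr$ and $\eta_0<\taupr$, and $\etapr=0$, then $\ov(\gavec)=\lh_2(\be)$ and we stop, with $\gavec\in M$. Otherwise $\etapr>0$, and we need $\gavec^\frown(\etapr)$ or the alternative extension ending in a $\mu$-index: the first possibility is delivered by clause~4 (applied to the vector obtained by one more extension in the direction of $\me(\gavec)$), while the second possibility is exactly the one handled by clause~5 in combination with the clause~6 case distinction that inserts $\mu_\si$ when $\gavec^\frown(\etapr)$ violates clause~5 of Definition 5.1 of \cite{CWc}. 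In each case, the vector required by the next iteration step is forced into $M$.

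The iteration terminates because each step either strictly decreases the $\ltvec$-measure of the extending index (for $\mu$-insertions, by inequality \ref{lalen}) or genuinely decomposes the $\ka$-index into a $\taupr$-divisible main part of strictly smaller $\logend$-data; this is the same termination argument used for $\me$ and for clause~6 in Remark following Definition \ref{spanningdefi}. The main potential obstacle is bookkeeping: one must check that each extension stays a legitimate tracking chain (clauses 5 and 6 of Definition 5.1 of \cite{CWc}), which is exactly the reason why clauses 4.1 and 5, and the case split inside clause~6, are formulated as they are; and one must check, in the weakly spanning case, that every chain produced during the iteration properly extends the base $\alvec$. The latter is immediate, since $\bevec$ itself properly extends $\alvec$ and the iteration only ever extends $\bevec$. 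This completes the verification that $\tc(\lh_2(\be))\in M$.\qed
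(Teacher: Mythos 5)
Your proposal is correct and takes essentially the same route as the paper: the published proof is a one-line citation of Lemma \ref{meclosurelem} together with Lemma \ref{letwosuclem} (via Corollaries 5.6 and 7.13 of \cite{CWc}), and your argument is exactly the unfolding of that citation through the explicit $\lh_2$-algorithm, checking that each truncated-$\ka$-index extension step is forced into $M$ by clauses 3--6 and $\me$-closure. The only quibbles are minor misattributions (the $\nu$-index appending steps are clause 5 rather than clauses 1--3, and the Case 1 starting vector falls under clause 5.2 or 6 rather than clause 3 when $\tau=\mu_{\tau'}$), which do not affect the argument.
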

\begin{proof} This follows from Lemma \ref{meclosurelem} using Lemma \ref{letwosuclem}, cf.\ Corollaries 5.6 and 7.13 of \cite{CWc}. 
\qed \end{proof}

\noindent The closure under $\lh$ has a convenient sufficient condition on the basis of the following 
\begin{defi}[5.12 of \cite{W}] A tracking chain $\alvec\in\TC$ is called \emph{convex}\index{convex} if and only if every $\nu$-index in $\alvec$
is maximal, i.e.\ given by the corresponding $\mu$-operator.
\end{defi}

\begin{cor}[5.13 of \cite{W}]\label{lhclscor}
Let $\alvec\in\TC$ be convex and $M\finsub\TC$ be weakly spanning above $\alvec$.
Then $M$ is closed under $\lh$.
\end{cor}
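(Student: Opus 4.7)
The plan is to reduce the statement to Corollary \ref{leoclosurecor}, which for any $\bevec\in M$ with $\be:=\ov(\bevec)$ already delivers $\tc(\lh(\be))\in M$ under the side condition that $\ov(\bevec_{\restriction_{\gbo(\bevec)}})$ is a proper extension of $\alvec$. Thus the entire task amounts to verifying this side condition, with convexity of $\alvec$ being precisely the ingredient that supplies it.

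To this end, I would fix an arbitrary $\bevec\in M$ and write its components as $\bevec_i=(\becp{i,1},\ldots,\becp{i,k_i})$; by Definition \ref{reldefi} we have $\alvec\propersub\bevec$. Setting $(n_0,m_0):=\gbo(\bevec)$, I split into two cases. If $(n_0,m_0)$ is the terminal pair of $\bevec$, then $\bevec_{\restriction_{n_0,m_0}}=\bevec$, which properly extends $\alvec$. Otherwise, the formula $\lh(\be)=\ov(\me(\bevec^\star))$ from Corollary 7.13 of \cite{CWc}, together with the characterization in Proposition \ref{gbocharprop} applied to the $\leo$-pair $(\be,\lh(\be))$, will give me $m_0\ge 2$ together with the sub-maximality $\becp{n_0,m_0}<\mu_\taucp{n_0,m_0-1}$, read off from the strict inequality between $\becp{n_0,m_0}$ and the corresponding $\nu$-index of $\tc(\lh(\be))$ at position $(n_0,m_0)$.

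The key step, where convexity enters, is to rule out $(n_0,m_0)\in\dom(\alvec)$. Assuming the contrary, $\alvec\sub\bevec$ forces $\becp{n_0,m_0}=\alcp{n_0,m_0}$, and convexity of $\alvec$ supplies $\alcp{n_0,m_0}=\mu_\taucp{n_0,m_0-1}$, contradicting the sub-maximality just obtained. Hence $(n_0,m_0)\notin\dom(\alvec)$, so $\bevec_{\restriction_{n_0,m_0}}$ properly extends $\alvec$, and Corollary \ref{leoclosurecor} closes the argument. The main obstacle I anticipate is the clean extraction of sub-maximality in the non-terminal $\gbo$-case from Proposition \ref{gbocharprop} and Corollary 7.13 of \cite{CWc}; once this is in hand, the contradiction with convexity is immediate and the remainder is bookkeeping.
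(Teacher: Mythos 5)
Your proposal is correct and follows the same route as the paper, whose proof is just the one-line citation of Proposition \ref{gbocharprop}, Corollary \ref{leoclosurecor}, Lemma \ref{meclosurelem}, and equation \ref{lhequation}; you have simply filled in the details of how these combine. In particular, your use of convexity to exclude $(n_0,m_0)=\gbo(\bevec)$ from $\dom(\alvec)$ via the sub-maximality of the $\nu$-index at a non-terminal $\gbo$-position is exactly the intended verification of the side condition in Corollary \ref{leoclosurecor}.
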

\begin{proof}
This is a consequence of Proposition \ref{gbocharprop}, Corollary \ref{leoclosurecor}, Lemma \ref{meclosurelem}, 
and equation \ref{lhequation}.
\qed \end{proof}

This concludes the review of \cite{W} to provide a solid basis for developments to come.
We now introduce some useful notation before relativizing the notion of spanning set of tracking chains.

\begin{defi}\label{Malgsdefi}
Let $M\finsub\TC$.
\begin{enumerate}
\item For $\alvec\in M$ where $\alvec_i=(\alcp{i,1},\ldots,\alcp{i,m_i})$, $1\le i\le n$, 
let $M_\alvec$ denote the subset \index{$M_\alvec$}
\[M_\alvec:=\{\bevec\in M\mid \ov(\alvec)\lo\ov(\bevec)\}.\]
We also sometimes denote a finite set of $\lo$-successors of some $\alvec\in\TC$ by $M_\alvec$, 
i.e.\ a superset $M$ containing $\alvec$ is not required.
For $\alvec\in\TC$ we define $I(\alvec)$\index{$I(\alvec)$} to be the set of all initial chains of $\alvec$, including $\alvec$.  
For convenience we set $I(()):=\emptyset$ and $M_{()}:=M$.
\item Set $\gs(M_\alvec):=0$ if $M_\alvec=\emptyset$, otherwise let $\bevec\in M_\alvec$, $\bevec_i=(\becp{i,1},\ldots,\becp{i,k_i})$, 
$1\le i\le l$, be the unique chain corresponding to the greatest immediate
$\lo$-successor of $\ov(\alvec)$ in $\ov[M_\alvec]$, and let $\sivec$ be the chain associated with $\bevec$.
We define \index{$\gs(M_\alvec)$}
\[\gs(M_\alvec):=\left\{\begin{array}{ll}
            \becp{l,1}&\mbox{if } k_l=1\\[2mm]
            \sicp{l,k_l-1}&\mbox{otherwise.}
            \end{array}\right.\]
If $\bevec$ is a $1$-step extension of $\alvec$, we call $\gs(M_\alvec)$ the \emph{$\ka$-index of the greatest immediate $\lo$-successor 
of $\alvec$ in $M_\alvec$}.
\end{enumerate}
\end{defi}

We now strengthen the notion of weakly spanning sets of tracking chains above some $\alvec$.
Proposition \ref{gbocharprop} will play a central role in the definition of (relativized) spanningness as it
characterizes $\leo$ in terms of tracking chains in the sense that necessary and sufficient
conditions for tracking chains $\alvec,\bevec\in\TC$ to satisfy $\ov(\alvec)\leo\ov(\bevec)$ are given.
By Lemma \ref{subresplem} the relation $\sub$ of initial chain on $\TC$ respects the ordering $\letc$,
hence also the characterization of $\leo$ on $\TC$. 

\begin{defi}\label{branchinginddefi}
According to Proposition \ref{gbocharprop}, for $\alvec,\bevec\in\TC$ such that $\ov(\alvec)\lo\ov(\bevec)$ and $\alvec\not\sub\bevec$,
there exists $(i,j)\in\dom(\alvec)\cap\dom(\bevec)$ such that $\alvec_{\restriction_{i,j}}=\bevec_{\restriction_{i,j}}$ and
$\alcp{i,j+1}<\becp{i,j+1}$.  
We call $(i,j+1)=:\bp(\alvec,\bevec)$ the \emph{branching index pair of $\alvec$ and $\bevec$}.\index{branching index pair,$\bp$} 
If the above conditions on 
$\alvec$ and $\bevec$ do not hold, we say that the branching index pair of $\alvec$ and $\bevec$ does not exist.
\end{defi}

\begin{defi}[Spanning sets of tracking chains above $\alvec$]\label{spanningabovedefi}
Let $\alvec\in\TC$ and $M_\alvec\finsub\TC$ be as in the above definition.
$M_\alvec$ is called \emph{spanning above $\alvec$}\index{spanning!spanning above $\alvec$} 
if it is closed under clauses 1 -- 6 of Definitions \ref{precldefi} and \ref{spanningdefi}  
with the modification that the resulting respective tracking chains $\bevec$ satisfy $\ov(\alvec)\lo\ov(\bevec)$, and if it 
\begin{enumerate}
\item[7.] \emph{supplies implicit maximal extensions:} For any $\bevec\in M_\alvec$ such that $\bp(\alvec,\bevec)=:(i,j+1)$ exists
with $\chi^\taucp{i,j}(\taucp{i,j+1})=1$ (where $\tauvec$ is the chain associated with $\alvec$), we have 
$\alvecpr:=\me(\alvec_{\restriction_{i,j+1}})\in M_\alvec$, provided that $\ov(\alvec)\lo\ov(\alvecpr)$, and
\item[8.] \emph{extends main lines:} if $\cml(\bevec)=:(i,j)$ exists for some $\bevec\in M_\alvec$, then 
$\bevec_{\restriction_{i,j+1}}[\mu_\sicp{i,j}]\in M_\alvec$ where $\sivec$ is the chain associated with $\bevec$. 
\end{enumerate}
For $\alvec=()$ any spanning set of tracking chains according to Definition \ref{spanningdefi} is called \emph{spanning above $\alvec$}.
\end{defi}
\begin{rmk} Any $M\finsub\TC$ that is spanning according to Definition \ref{spanningdefi}, is closed under clauses 7 and 8; 
hence closure under clauses 1 -- 8 is a finite process.
\end{rmk}
\begin{lem}\label{strongspanlem}
Let $M$ be spanning above some $\alvec\in\TC$. Then $M$ is closed under $\me$, $\lh$, and $\lh_2$. If $\alvec$ is convex, then
every $\bevec\in M$ is a proper extension of $\alvec$, i.e.\ $\alvec\sub\bevec$. Thus, for convex $\alvec$, $M$ is spanning above $\alvec$
if and only if it is weakly spanning above $\alvec$ according to Definition \ref{reldefi}. 
\end{lem}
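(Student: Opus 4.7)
The plan is to establish the three assertions in the order they appear, using the closure results already proven for weakly spanning sets as a foundation and invoking Proposition \ref{gbocharprop} to handle the convex case.

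First, I would observe that any $M$ spanning above $\alvec$ in particular satisfies the clauses of Definition \ref{reldefi} and is therefore weakly spanning above $\alvec$. Hence Lemma \ref{meclosurelem} yields closure of $M$ under $\me$ for proper extensions of $\alvec$, and Corollary \ref{lhtwoclscor} yields closure under $\lh_2$. The nontrivial point is closure under $\lh$: whereas Corollary \ref{lhclscor} required convexity of $\alvec$, in the present setting the role of convexity is played by the additional clauses 7 and 8. Concretely, Corollary \ref{leoclosurecor} gives $\tc(\lh(\ov(\bevec))) \in M$ for $\bevec \in M$ provided that $\ov(\bevec_{\restriction_{\gbo(\bevec)}})$ is a proper $\lo$-successor of $\ov(\alvec)$. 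When this proviso fails, Proposition \ref{gbocharprop} supplies a $\klex$-chain of branching index pairs $(i_1,j_1+1),\ldots,(i_s,j_s+1)$, and clause 7 places the corresponding maximal extensions $\me(\alvec_{\restriction_{i_r,j_r+1}})$ with $\chi$-value $1$ into $M_\alvec$. Iterating this along the chain produces an element to which Corollary \ref{leoclosurecor} applies directly, yielding $\tc(\lh(\ov(\bevec))) \in M_\alvec$.

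For the second assertion, suppose $\alvec$ is convex and $\bevec \in M_\alvec$. By definition $\ov(\alvec)\lo\ov(\bevec)$, so by Proposition \ref{gbocharprop} either $\alvec\sub\bevec$, which is what we want, or else a branching index pair $(i_1,j_1+1)=\bp(\alvec,\bevec)$ with $j_1\ge 1$ exists and satisfies $\alcp{i_1,j_1+1}<\becp{i_1,j_1+1}$. But convexity of $\alvec$ forces the $\nu$-index $\alcp{i_1,j_1+1}$ to equal $\mu_{\taucp{i_1,j_1}}$, the maximum admissible $\nu$-value at that base by condition 3 of Definition 5.1 of \cite{CWc}, contradicting the strict inequality. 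Hence $\alvec\sub\bevec$.

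For the final equivalence, the forward direction is immediate since a set spanning above $\alvec$ a fortiori satisfies the clauses of Definition \ref{reldefi}. For the converse, assume $M$ is weakly spanning above the convex chain $\alvec$. Clause 7 of Definition \ref{spanningabovedefi} is vacuously satisfied, since by the second assertion no branching index pair $\bp(\alvec,\bevec)$ can exist for $\bevec\in M$. Clause 8 reduces to clauses 1 and 2: given $\bevec\in M$ with $\cml(\bevec)=(i,j)$, necessarily $j\ge 1$, closure under initial chains (clause 1) yields $\bevec_{\restriction_{i,j+1}}\in M$, and then $\nu$-index closure (clause 2) applied to this initial chain produces $\bevec_{\restriction_{i,j+1}}[\mu_{\sicp{i,j}}]\in M$, which by $\alvec\sub\bevec$ and convexity remains a proper $\lo$-successor of $\ov(\alvec)$. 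The main obstacle is the first paragraph: the iterated application of clause 7 along the $\klex$-chain of Proposition \ref{gbocharprop} has to be matched carefully with the definition of $\gbo$ and the target of $\lh$, so that Corollary \ref{leoclosurecor} ultimately applies at the terminal stage.
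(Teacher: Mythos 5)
Your treatment of $\me$, of $\lh_2$, and of the two convexity assertions matches the paper's: Lemma \ref{meclosurelem} and Corollary \ref{lhtwoclscor} handle the first and third closure claims, and your argument that a branching index pair $\bp(\alvec,\bevec)$ cannot exist for convex $\alvec$ (since the $\nu$-index $\alcp{i_1,j_1+1}=\mu_{\taucp{i_1,j_1}}$ is already maximal) is exactly the right reason why every $\bevec\in M$ must then properly extend $\alvec$ and why clauses 7 and 8 become redundant. (One small caution: a set spanning above $\alvec$ is not literally an instance of Definition \ref{reldefi} before you know its elements properly extend $\alvec$, so the opening sentence of your first paragraph should be phrased as ``satisfies the relativized instances of clauses 1--6 needed below'' rather than ``is weakly spanning above $\alvec$''.)

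The genuine gap is in the $\lh$-closure argument, where you invoke clause 7 of Definition \ref{spanningabovedefi}; the paper instead uses clause 8 together with $\nu$-index closure (clause 2 of Definition \ref{precldefi}) and equation \ref{lhequation}, and the difference matters. By equation \ref{lhequation}, what must lie in $M$ is $\me\bigl(\bevec_{\restriction_{\gbo(\bevec)}}[\mu_{\taucp{i,j}}]\bigr)$, and $\gbo(\bevec)$ records where $\tc(\lh(\ov(\bevec)))$ branches off from $\bevec$ --- not where $\bevec$ branches off from the base chain $\alvec$. Clause 7 is keyed on $\bp(\alvec,\bevec)$ and therefore addresses the wrong branching relation: (i) it does not apply at all when $\alvec\sub\bevec$, yet the proviso of Corollary \ref{leoclosurecor} can still fail in that case, namely when $\gbo(\bevec)$ points to an index pair inside $\dom(\alvec)$ at which $\alvec$ carries a sub-maximal $\nu$-index; and (ii) even when it does apply, it supplies $\me(\alvec_{\restriction_{i,j+1}})$ with $\alvec$'s own index at $(i,j+1)$, not the $\mu$-maximized chain required by equation \ref{lhequation}. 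The clause designed for precisely this situation is clause 8, which for $\cml(\bevec)=(i,j)$ puts $\bevec_{\restriction_{i,j+1}}[\mu_{\sicp{i,j}}]$ into $M_\alvec$; combining this with clause 2 and closure under $\me$ yields $\tc(\lh(\ov(\bevec)))\in M$. Your proof should be repaired by replacing the clause-7 iteration along the chain of Proposition \ref{gbocharprop} with this clause-8 argument.
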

\begin{proof} Lemma \ref{meclosurelem} yields the claim regarding $\me$. 
Condition 8 above in conjunction with condition 2 of Definition \ref{precldefi} and characterization (\ref{lhequation}) of $\lh$
then imply the claim regarding $\lh$. And the claim regarding $\lh_2$ follows from Corollary \ref{lhtwoclscor}. 
If $\alvec$ is convex, conditions 7 and 8 never apply to $(i,j)$ such that $(i,j+1)\in\dom(\alvec)$.
\qed \end{proof}

For given $\al<\oneinf$, the following proposition characterizes the ordinals $\be$ such that $\al\lo\be$ 
and there does not exist any $\ga$ with $\al<\ga<_2\be$ in terms of tracking chains, cf.\ Theorem 7.9 of \cite{CWc} and
Proposition \ref{letwopredprop}.

\begin{prop}[Relative $\le_2$-minimality]\label{relletwoprop} Let $\alvec,\bevec\in\TC$ satisfy $\al:=\ov(\alvec)\lo\ov(\bevec)=:\be$. 
Let $\sivec$ be the chain associated with $\bevec$, $\bevec_i=(\becp{i,1},\ldots,\becp{i,k_i})$, $i=1,\ldots,l$.
According to Theorem 7.9 of \cite{CWc} $\be$ is $\le_2$-minimal if and only if $k_l\le2$ and $\si^\star_l=1$. 
In the case $\alvec\not\sub\bevec$ let $(i,j+1):=\bp(\alvec,\bevec)$, otherwise let $(i,j):=(n,m_n)$.
Then \[\be \mbox{ is $\al$-$\le_2$-minimal if and only if either $(i,j+1)=(l,k_l)$ or $k_l\le2\andsp l^\star\klex(i,j)$.}\] 
\end{prop}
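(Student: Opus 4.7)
The plan is to apply Proposition \ref{letwopredprop} to enumerate $\predecs_2(\be)$ as a $\ktwo$-chain $\be_1 \ktwo \ldots \ktwo \be_r$ with $r \ge 0$, each $\be_h = \ov(\bevec_{\restriction_{k_h,t_h+1}})$ for an index pair $(k_h, t_h+1) \in \dom(\bevec)$. Since the end-positions are $\klex$-increasing, $\be$ is $\al$-$\letwo$-minimal exactly when either $r=0$ (absolute $\letwo$-minimality, i.e.\ $k_l \le 2$ and $\si^\star_l = 1$ by Theorem 7.9 of \cite{CWc}) or $\be_r \le \al$. The task thus reduces to a single lex comparison at the end-position $(k_r, t_r+1)$.

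First I would split on whether $\alvec \sub \bevec$. In Case A, where $\bp(\alvec,\bevec)$ is undefined and hence $(i,j) = (n,m_n)$, both $\bevec_{\restriction_{k_r, t_r+1}}$ and $\alvec$ are initial chains of $\bevec$, and Lemma \ref{subresplem} gives $\be_r \le \al$ iff $(k_r, t_r+1) \kglex (n, m_n)$. In Case B, where $\bp(\alvec,\bevec) = (i, j+1)$ exists, I would apply the $\letc$-ordering directly at the branching position: the strict jump $\becp{i,j+1} > \alcp{i,j+1}$ forces $\bevec_{\restriction_{i,j+1}}$ to $\letc$-exceed $\alvec$ strictly, whence $\be_r \le \al$ iff $(k_r, t_r+1) \klex (i, j+1)$. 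I would then pin down $(k_r, t_r+1)$ using clauses 2 and 3 of Proposition \ref{letwopredprop}: under $k_l > 2$ it is $(l, k_l - 1)$; under $k_l \le 2$ and $\si^\star_l > 1$ it is $(i', j'+1)$ where $l^\star = (i', j')$.

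Feeding these formulas back into the lex inequalities collapses the $k_l > 2$ subcase to $(i, j+1) = (l, k_l)$ in both Cases A and B; in Case A this forces $l = n$ and $k_l = m_n + 1$, yielding $\be_r = \al$, while in Case B it forces the branching pair to sit at the end of $\bevec$ and yields $\be_r = \ov(\alvec_{\restriction_{i,j}}) < \al$. The $k_l \le 2$ subcase with $\si^\star_l > 1$ collapses to $l^\star \klex (i, j)$, and the remaining case $r = 0$ is subsumed under the second disjunct with the convention that $l^\star$ lies below every non-trivial index pair. The main obstacle will be the Case B analysis: correctly translating the $\letc$-comparison through Definition 5.13 of \cite{CWc} when neither chain is a prefix of the other, verifying the boundary identity $\be_r = \ov(\alvec_{\restriction_{i,j}})$ in the first disjunct of Case B, and reconciling the conventions around $l^\star$ when $\be$ is absolutely $\letwo$-minimal.
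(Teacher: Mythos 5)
Your proposal is correct and follows exactly the route the paper intends: the paper's proof is just the citation ``Cf.\ Theorem 7.9 of \cite{CWc} and Proposition \ref{letwopredprop}'', and your argument is the natural elaboration of that citation — enumerating $\predecs_2(\be)$ via Proposition \ref{letwopredprop}, reducing $\al$-$\letwo$-minimality to whether the greatest $\letwo$-predecessor $\be_r$ satisfies $\be_r\le\al$, and converting that into the stated lexicographic conditions on index pairs using the fact that evaluations of initial chains of $\bevec$ increase with the index pair (together with the branching-pair analysis in the case $\alvec\not\sub\bevec$). The case bookkeeping you outline, including the adjacency argument forcing $(i,j+1)=(l,k_l)$ when $k_l>2$ and the subsumption of the $\letwo$-minimal case under the second disjunct, checks out.
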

\begin{proof} Cf.\ Theorem 7.9 of \cite{CWc} and Proposition \ref{letwopredprop}.
\qed \end{proof}

The following definition and theorem give a flavor of the expressive power of tracking chains in the sense that isomorphisms of
intervals with the same $\ktwo$-predecessors can be identified easily.

\begin{defi}[Vertical translation]\label{verttrandefi}\index{vertical translation} Let $\alvec\in\TC\cup\{()\}$, where
$\alvec_i=(\alcp{i,1},\ldots,\alcp{i,m_i})$, $1\le i\le n$, and $n\ge 0$. Let $M=M_\alvec\finsub\TC$ be a set of proper extensions of $\alvec$ 
of the form $M=\{\bevec\}\cup M_\bevec$ for a tracking chain $\bevec$,
where $\bevec_i=(\becp{i,1},\dots,\becp{i,k_i})$, $1\le i\le l$, such that $k_l=1$ and $l^\star=(1,0)$ if $\alvec=()$ while 
$l^\star\klex(n,m_n)$ otherwise, so that by Proposition \ref{gbocharprop} and (\ref{baseseqeq}) each $\gavec\in M$ is an 
extension of $\bevec$.
For $\gavec\in M$ we define the tracking chain $\gavecpr$ by
\begin{equation}\label{verttraneq}\gavec^\prime_{i,j}:=\left\{\begin{array}{ll}
            \alcp{i,j}&\mbox{if } (i,j)\in\dom(\alvec)\\[2mm]
            \taucp{l,1}&\mbox{if } (i,j)=(n+1,1)\\[2mm]
            \gacp{l-n-1+i,j}&\mbox{if } (n+1,1)\klex(i,j)\andsp(l-n-1+i,j)\in\dom(\gavec).
            \end{array}\right.
\end{equation}
We define $M^\prime:=\{\gavecpr\mid\gavec\in M\}$.
\end{defi}

\begin{theo}\label{verttrantheo} Let $M,\alvec,\bevec$ be as in the above definition and $I:=I(\alvec)$ as in Definition \ref{Malgsdefi}. 
Then $M^\prime$ consists of tracking chains that properly extend $\alvec$, 
and the images $\ov[I\cup M]$ and $\ov[I\cup M^\prime]$ are isomorphic substructures of $\Ctwo$, both closed under $\ktwo$-predecessors.
\end{theo}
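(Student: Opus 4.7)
The plan is to prove the theorem in four stages: (i) verify $M^\prime\subseteq\TC$ with each $\gavecpr$ properly extending $\alvec$; (ii) check that the map $\Phi$ defined as the identity on $\ov[I]$ and sending $\ov(\gavec)\mapsto\ov(\gavecpr)$ for $\gavec\in M$ preserves $\le$; (iii) verify that $\Phi$ preserves $\leo$ and $\letwo$; and (iv) establish closure of both images under $\ktwo$-predecessors.

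For (i), I would check the six conditions of Definition 5.1 of \cite{CWc} for $\gavecpr$ by splitting $\dom(\gavecpr)$ into three regions: positions within $\dom(\alvec)$ (inherited from $\alvec\in\TC$), positions strictly beyond the seam $(n+1,1)$ (transferred verbatim from $\gavec$ via the index-preserving shift $(l-n-1+i,j)\mapsto(i,j)$), and the seam itself. The genuinely new verification is at the seam: $\taucp{l,1}$, now interpreted as a $\ka$-index succeeding $\alvec$, must satisfy condition 2 of Definition 5.1 of \cite{CWc}, namely $\taucp{l,1}>\taucp{n,m_n}^\star$. This follows from the hypothesis $l^\star\klex(n,m_n)$ (or $l^\star=(1,0)$ when $\alvec=()$), which forces the base of $\becp{l,1}$ to be determined by a position within $\dom(\alvec)$ and hence to dominate $\taucp{n,m_n}^\star$.

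For (ii), I would use the ordering $\letc$ of Definition 5.13 of \cite{CWc}: the lexicographic comparison of tracking chains in $I\cup M$ either occurs entirely within $\dom(\alvec)$ (where $\Phi=\id$) or depends only on the suffix beyond $\alvec$ (respectively beyond $\bevec$), which is in canonical bijection under the shift. For (iii), Proposition \ref{gbocharprop} reduces preservation of $\leo$ to preservation of the $\sub$-relation and of branching index pairs with their $\chi$-values; both transport under the shift because indices and bases are preserved. For $\letwo$, Lemma \ref{letwosuclem} characterizes $\al\letwo\be$ via the divisibility $\tauti\mid\be$, which is preserved because the relevant bases at each position above the seam coincide, up to the shift of positional indices, with those of $\gavec$.

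The hardest step is (iv). By Proposition \ref{letwopredprop}, the $\ktwo$-predecessors of $\ov(\gavec)$ for $\gavec\in I\cup M$ are evaluations $\ordcp{k,p+1}(\gavec)$ at specific positions determined from $\cs$. For $\gavec\in I$ the relevant initial chains of $\gavec$ lie in $I$. For $\gavec\in M_\bevec$ (hence $\bevec\sub\gavec$ by Proposition \ref{gbocharprop}), the condition $k_l=1$ together with the placement $l^\star\klex(n,m_n)$ guarantees that every base relevant to $\bevec$ sits within $\dom(\alvec)$, so the $\ktwo$-predecessors arising from positions up to and including $(l,1)$ lie in $\ov[I\cup\{\bevec\}]$; those from positions strictly beyond $\bevec$ are evaluations of proper initial chains of $\gavec$ whose tracking chains are extensions of $\bevec$ in $M_\bevec$. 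Closure for $\ov[I\cup M^\prime]$ then follows by transporting the $\ktwo$-structure through the isomorphism of (iii), since $\Phi$ preserves exactly the tracking chain data on which the $\ktwo$-predecessors depend.
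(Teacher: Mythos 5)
Your four-stage plan is, in substance, the ``close inspection of the definitions'' that the paper's one-line proof invokes, and stages (ii)--(iv) are plausible transfers through the positional shift. The problem sits in stage (i), at exactly the point the paper's proof singles out as the only one worth stating. You identify the ``genuinely new verification'' at the seam as the lower-bound condition 2 of Definition 5.1 of \cite{CWc}. That condition, however, essentially transfers from $\gavec$: the hypothesis $l^\star\klex(n,m_n)$ (resp.\ $l^\star=(1,0)$) means the $\star$-data of the new position $(n+1,1)$ is computed inside $\dom(\alvec)$ just as $l^\star$ was for $\bevec$. What does \emph{not} transfer automatically is the \emph{upper} bound: to append $\taucp{l,1}$ to $\alvec$ as a fresh $\ka$-index at $(n+1,1)$ one must check condition 5 of Definition 5.1 of \cite{CWc}, i.e.\ compatibility with the delimiters $\rho_r$ --- concretely, that $\taucp{l,1}<\rho_r(\bevec_{\restriction_{r,s}})$ for all $(r,s)\in\dom(\bevec)$ with $l^\star\klex(r,s)\klex(l,1)$, which in particular yields the bounds needed at the positions of $\dom(\alvec)$. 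This is precisely the condition that routinely fails when a $\ka$-index is appended to a tracking chain (cf.\ the case distinctions forced by clause 5 in Definition \ref{spanningdefi} and in the description of $\gs$), so it cannot be passed over; it is what the hypotheses $k_l=1$ and $l^\star\klex(n,m_n)$ are there to secure, by comparing $\taucp{l,1}\le\becp{l,1}$ with the delimiters along $\bevec$ between $l^\star$ and $(l,1)$. Without this step you have not shown that $\bevecpr=\alvec^\frown(\taucp{l,1})$ is a tracking chain at all, and the later stages have nothing to act on. Your proposal omits any argument for it.

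A secondary caution on stage (iv): you assert that the initial chains of $\gavec\in M_\bevec$ lying strictly beyond $\bevec$ at the positions producing $\ktwo$-predecessors are ``extensions of $\bevec$ in $M_\bevec$''. Definition \ref{verttrandefi} only requires $M_\bevec$ to be a finite set of $\lo$-successors of $\bevec$, so membership of those initial chains in $M_\bevec$ is an unstated assumption; it holds in the application inside Theorem \ref{isomtheo}, where $M_\bevec$ is taken closed under initial chains, but in your write-up it should either be imposed or argued.
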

\begin{proof} The claims are verified by close inspection of the definitions involved.
Notice that $\bevecpr=\alvec^\frown(\taucp{l,1})$ is a tracking chain since our assumptions
prevent a violation of condition 5 in Definition 5.1 of \cite{CWc} and 
imply that for all $(r,s)\in\dom(\bevec)$ such that $l^*=:(i,j)\klex(r,s)\klex(l,1)$ we have 
$\taucp{l,1}<\rho_r(\bevec_{\restriction_{r,s}})$. 
\qed \end{proof}
\begin{rmk} Note that in the case where $\cml(\bevec)=:(i,j)$ exists, 
the isomorphic copy $M^\prime$ might lose $\lo$-connections up to $\bevec^+:=\bevec_{\restriction_{i,j+1}}[\mu_{\taucp{i,j}}]$.
\end{rmk}
\begin{defi}\label{Mdbdefi}
Let $M\finsub\TC$. We will make use of the notation $M_\alvec$ as in Definition \ref{Malgsdefi}.
\begin{enumerate}
\item\label{principalchainpart} Suppose $\alvec\in\TC$, $\alvec_i=(\alcp{i,1},\ldots,\alcp{i,m_i})$, $1\le i\le n$, 
such that $m_n>1$ and $\alcp{n,m_n}=\mu_\tau$,  
where $\tau=\taucp{n,m_n-1}$, $\tauvec$ denoting the chain associated with $\alvec$. 
Then $\alvec$ is called a \emph{principal chain (to base $\tau$)},\index{principal chain (to base $\tau$)} 
and $\tau$ is called \emph{the base of $\alvec$}.\index{base of $\alvec$}
If $\alvec\in M$ then we say that $\alvec$ is a \emph{principal chain in $M$} and that $\tau$ is a \emph{base in $M$}.
\item\label{parpart} Let $\alvec$ be as in part \ref{principalchainpart} and $\bevec\in M_\alvec$,
where $\bevec_i=(\becp{i,1},\ldots,\becp{i,k_i})$, $1\le i\le l$, with associated chain $\tauvec$. 
If $\alvec\sub\bevec$ let $r\in(n,l]$ be minimal such that $\tau\nmid\becp{r,1}$, i.e.\ $\taucp{r,1}<\tau$, 
and $\taurstar<\taucp{r,1}$, if that exists. Otherwise set $r:=0$.
Then 
\[\parind_{M_\alvec}(\bevec):=r\] 
is called the \emph{parameter index of $\be$ in $M_\alvec$}, \index{parameter index, $\parind$}
and the \emph{parameter of $\bevec$ in $M_\alvec$}\index{parameter of ($\bevec$ in) $M_\alvec$, $\param(M_\alvec)$} 
is defined by 
\[\param_{M_\alvec}(\bevec):=\left\{\begin{array}{ll}
\taucp{r,1} & \mbox{ if }r>0\\  
0 & \mbox{ otherwise.}
\end{array}\right.\] 
We will omit the subscript $M_\alvec$ when this 
context is unambiguous.
The set of parameters of $M_\alvec$ is then defined by
\[\param(M_\alvec):=\{\param(\bevec)\mid\bevec\in M_\alvec\},\]
and its maximum is denoted by $\maxparam(M_\alvec):=\max(\param(M_\alvec))$. \index{$\maxparam$} 
\item\label{mdbpart} Suppose in addition to the assumptions of part \ref{parpart} that $M_\alvec$ is
spanning above $\alvec$ and that $\bevec$ is the tracking chain of $\max(\ov[M_\alvec])$, i.e.\ $\bevec$ is the $\ktc$-maximum of $M_\alvec$.
In the case where
\[\max\{\param(\gavec)\mid\gavec\in M_\alvec\andsp\gavec\nsubseteq\bevec\}<\param(\bevec)\in\Ez\]
and either $\alvec$ is convex or $\lh(\ov(\alvec))=\ov(\bevec)$, we call
\[\db(M_\alvec):=\param(\bevec)\] 
the \emph{distinguished base of $M_\alvec$}\index{distinguished base of $M_\alvec$, $\db$} and 
\[\dc(M_\alvec):=\bevec_{\restriction_{r,1}}\mbox{, where }r=\parind(\bevec),\] 
the \emph{distinguished chain in $M_\alvec$}.\index{distinguished chain in $M_\alvec$, $\dc$} 
In all other cases we set $\db(M_\alvec):=0$ and $\dc(M_\alvec):=()$.
\item\label{mdspart} Let $M_\alvec$ and $\bevec$ be as in part \ref{mdbpart} and suppose that $\si:=\db(M_\alvec)>0$.
If $(n,m_n+1)\in\dom(\tauvec)$, i.e.\ $m_n<k_n$, define $\sivec_0:=(\taucp{n,m_n},\ldots,\taucp{n,k_n-1})$,
otherwise set $\sivec_0:=()$. Then define $\sivec_j:=(\taucp{n+j,1},\ldots,\taucp{n+j,k_{n+j}-1})$ for $j=1,\ldots,r-n-1$,
and $\sivec_{r-n}:=(\si)$. 
Finally define $\sivec$ to be the concatenation of the vectors $\sivec_j$, $j=0,\ldots,r-n$.  
The \emph{distinguished sequence of $M_\alvec$}\index{distinguished sequence of $M_\alvec$, $\ds$} is then defined by 
\[\ds(M_\alvec):=\sivec,\] 
and in all cases where the above conditions are not met we set $\ds(M_\alvec):=()$. 
\end{enumerate}
\end{defi}

\begin{lem}\label{dslem}
For convex $\alvec$ let $M_\alvec$ and $\bevec$ be as in part \ref{mdspart} of Definition \ref{Mdbdefi}. 
Setting $\sivec=(\si_1,\ldots,\si_s):=\ds(M_\alvec)$ and $\xi:=\gs(M_\alvec)$ we have \[\sumend(\xi)=\si_1.\]
Taking for $\alvec$ in the context of Case 2 of Definition \ref{sivecdefi} the same $\alvec$ as here if $(n,m_n+1)\in\dom(\tauvec)$ and 
$\alvec^\frown(\xi)$ otherwise, the sequence $\mq^{(\sipr)}(\si)$ of Lemma \ref{simqchilem} coincides with the distinguished sequence 
$\sivec$, where $\sipr$ is equal to the base $\tau$ of $\alvec$ if $\xi\ge\tau$.
\end{lem}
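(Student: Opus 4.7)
The plan is to prove each of the two equalities in turn, relying essentially on Lemma \ref{strongspanlem}, Proposition \ref{gbocharprop}, and Lemma \ref{simqchilem}.

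For the first equality $\sumend(\xi) = \si_1$: Since $\alvec$ is convex with top index $\alcp{n,m_n} = \mu_\tau$ and $M_\alvec$ is spanning above $\alvec$, Lemma \ref{strongspanlem} yields closure of $M_\alvec$ under $\me$, $\lh$, and $\lh_2$, while Proposition \ref{gbocharprop} locates the greatest immediate $\lo$-successor of $\ov(\alvec)$ in $M_\alvec$ as the chain $\gavec$ whose associated sequence agrees with $\tauvec$ on an initial segment and follows the rightmost admissible extension compatible with the branching conditions and \eqref{baseseqeq}. The defining $\ka$-index $\xi$ of this successor can then be read off directly from the tail of this associated chain: if $m_n < k_n$, the extension remains in row $n$ and $\sumend(\xi) = \taucp{n,m_n} = \si_1$; if $m_n = k_n$, the extension opens row $n+1$ and $\sumend(\xi) = \taucp{n+1,1} = \si_1$.

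For the second equality I will apply Lemma \ref{simqchilem} in the Case 2 setup of Definition \ref{sivecdefi}, taking as input $\alvec$ itself when $m_n < k_n$ and $\alvec^\frown(\xi)$ otherwise, so that under the condition $\xi \ge \tau$ the distinguished base becomes the parameter $\sipr$. By Lemma \ref{simqchilem} the sequence $(\si_{k+1}, \ldots, \si_l) = \mq^{(\sipr)}(\si)$ is then generated by iterating (\ref{lamlogit}). An induction on the blocks $\sivec_j$ of $\ds(M_\alvec)$ shows: within a block, the $\log$-branch of (\ref{lamlogit}) enumerates the strictly descending $\nu$-indices $\taucp{n+j,1}, \ldots, \taucp{n+j,k_{n+j}-1}$; at block boundaries the $\la$-branch fires, triggered by an intermediate $\taucp{n+j,k_{n+j}} \in \Ez^{>\taucp{n+j,k_{n+j}-1}}$ marking the transition into a new row of $\bevec$. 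The iteration terminates at $\sipr = \tau$ precisely upon reaching $\db(M_\alvec) = \taucp{r,1}$, which by the definition of $\parind$ is the first index in $\tauvec$ beyond $(n,m_n)$ strictly below $\tau$.

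The main obstacle will be the bookkeeping across block boundaries in the inductive argument for the second equality: one must identify the positions in $\tauvec$ at which the associated chain transitions to a new row of $\bevec$ and verify that exactly at those positions (\ref{lamlogit}) selects its $\la$-branch rather than the $\log$-branch, as dictated by membership in $\Ez^{>\sipr_l}$. A secondary subtlety is ensuring that augmenting $\alvec$ with $(\xi)$ in the case $m_n = k_n$ produces the correct initial configuration for Lemma \ref{simqchilem}, namely that the parameter $\sipr_{k+1}$ extracted from the tracking sequence of the last position of the extended input coincides with the base $\tau$ whenever $\xi \ge \tau$, so that the terminal condition $\si_l = \sipr_l$ of Definition \ref{sivecdefi} fires exactly at $\db(M_\alvec)$.
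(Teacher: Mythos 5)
Your overall strategy -- reduce the first identity to the construction of the greatest immediate $\lo$-successor and the second to a block-by-block match with the recursion (\ref{lamlogit}) via Lemma \ref{simqchilem} -- is in the spirit of the paper's (very short) proof, which simply cites Definition \ref{spanningdefi} and Remark \ref{intermedmurmk}. However, two steps of your execution are wrong as stated. First, your case analysis for $\sumend(\xi)=\si_1$ mislocates $\gs(M_\alvec)$: since $\alvec$ ends in the maximal $\nu$-index $\alcp{n,m_n}=\mu_\tau$, every extension of $\alvec$ by a further $\nu$-index at $(n,m_n+1)$ is a $\ktwo$-successor of $\ov(\alvec)$ and hence the proper supremum of strictly intermediate $\lo$-successors, so it is never an immediate $\lo$-successor; the greatest immediate $\lo$-successor in $M_\alvec$ always arises from a new $\ka$-index at $(n+1,1)$, regardless of whether $m_n<k_n$. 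The identity $\sumend(\xi)=\si_1$ therefore does not come from Proposition \ref{gbocharprop} or Lemma \ref{strongspanlem}; it comes from clause 6 of Definition \ref{spanningdefi}, which builds that $\ka$-index from the (possibly truncated) additive normal form of $\la_\tau$, combined with Remark \ref{intermedmurmk}, which identifies the intermediate $\nu$-indices of the maximal branch $\bevec$ as values of the $\la$-operator, so that the first base $\si_1$ of $\ds(M_\alvec)$ (whether it is $\taucp{n,m_n}$ or $\taucp{n+1,1}$) is exactly the last surviving additive component of $\la_\tau$. You never invoke clause 6, and without it the first equality is not established.

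Second, in the inductive argument for the $\mq$-coincidence you have inverted the two branches of (\ref{lamlogit}). As recorded right after (\ref{lamlogit}), the $\la$-branch ($\si_l\in\Ez^{>\sipr_l}$, $\si_{l+1}=\sumend(\la^{\sipr_l}_{\si_l})$) is the one that can stay inside the current row, namely when $\si_{l+1}=\mu_{\si_l}\in\Ez^{>\si_l}$, while the $\log$-branch always opens a new row $\taucp{s+1,1}$. So within a block of $\ds(M_\alvec)$ the successive bases $\taucp{s,t},\taucp{s,t+1},\ldots$ are produced by iterated $\la$, not by $\log$, and they are strictly \emph{ascending} epsilon numbers (each is required to lie in $\Ez^{>\taucp{s,t}}$ for the row to continue), not "strictly descending $\nu$-indices" as you write; block boundaries are where either the $\log$-branch fires or the $\la$-branch fails to return $\mu_{\si_l}\in\Ez^{>\si_l}$. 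With the branches swapped, your induction would not reproduce the block structure of $\ds(M_\alvec)$. Finally, your termination argument should also account for the second condition in the definition of $\parind$ (namely $\taurstar<\taucp{r,1}$), which is what makes $\taucp{r,1}=\db(M_\alvec)$ the first base $\le\tau$ at which the recursion of Definition \ref{sivecdefi} stops with $\si_l=\sipr_l$.
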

\begin{proof} This is a consequence of Definition \ref{spanningdefi} and Remark \ref{intermedmurmk} regarding intermediate $\nu$-indices 
which are obtained by application of the $\la$-operator. \qed \end{proof}

\noindent Recall the operator $\bardot$ from Section 8 of \cite{W07a} and Section 5 of \cite{CWa}. 
\begin{defi}[Closedness]\label{cldefi}\index{closed, closed above $\alvec$}
Let $M\finsub\TC$ be spanning (spanning above $\alvec$). $M$ is \emph{closed} (\emph{closed above $\alvec$}) if and only if
for all principal chains $\bevec$ in $M$ such that $\taubar\in(\taupr,\tau)$,
where $\tau$ is the base of $\bevec$ and $\tauvec$ denotes the chain associated with $\bevec$, 
we have 
\begin{equation}\label{supportcond}
   \maxparam(M_\bevec)\left\{\begin{array}{ll}
            \ge\taubar&\mbox{if } \db(M_\bevec)=0\\[2mm]
            >\taubar&\mbox{otherwise}.
            \end{array}\right.
\end{equation}
We call the base $\tau$ of a principal chain $\bevec$ in $M$ such that $\taubar\in(\taupr,\tau)$ a 
\emph{supported base in $M$}\index{supported base in $M$}
if and only if (\ref{supportcond}) holds, otherwise we call $\tau$ a \emph{non-supported base in $M$}.
\end{defi}

\begin{lem}\label{closedbarlem}
Let $M$ be closed (closed above $\alvec$). Then $M$ is closed under $\bardot$ (above $\alvec$) in the following sense:
for any principal chain $\bevec\in M$ with supported base $\tau=\taucp{i,j}$ as in the above definition there exists 
a principal chain $\gavec\in M_\bevec$, $\bevec\sub\gavec$, to base $\taubar=\taucp{r,s}$ such that the bases $\taupr=\taucppr{i,j}$ and 
$\taubarpr=\taucppr{r,s}$ have the same index pair $(i,j)^\prime=(r,s)^\prime$.  
\end{lem}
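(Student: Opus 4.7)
My plan is to apply the closedness condition to produce a witness in $M_\bevec$ whose parameter is at least $\taubar$, then to descend through the spanning closure operations until a principal chain of base exactly $\taubar$ is reached, and finally to verify the index pair coincidence.

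First I would invoke Definition \ref{cldefi}: since $\tau$ is a supported base, there exists some $\devec \in M_\bevec$ with $\param_{M_\bevec}(\devec) \ge \taubar$, strict if $\db(M_\bevec) \ne 0$. By Definition \ref{Mdbdefi}(\ref{parpart}), this parameter is the base at position $(r,1)$ in the chain associated to $\devec$, where $r = \parind(\devec) > n$ is minimal such that the base at $(r,1)$ is strictly below $\tau$ yet above its starred predecessor. The predecessor index pair $(r,1)^\prime$ then coincides with $(i,j)^\prime$: by minimality of $r$ all intermediate bases between $\bevec$ and position $(r,1)$ are $\ge \tau$, and the ordering (\ref{baseseqeq}) of bases along $\leo$-chains forces the starred predecessor of the parameter to be $\taupr$ itself.

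Next I would generate the required principal chain via the spanning closure. Setting $\eta := \param_{M_\bevec}(\devec)$, the fact that $\taubar \le \eta < \tau$ together with the definition of $\bardot$ in (\ref{barop}) means that $\taubar$ is reached in finitely many spanning steps starting from $\devec$: the $\ka$-index closure (clause 4 of Definition \ref{precldefi}) splits $\eta$ along its $\ANF$-summands, the $\leo$-unfolding (clause 6 of Definition \ref{spanningdefi}) applies the $\la$- and $\log$-extractions of (\ref{lamlogit}), and the $\me$-$\mu$-maximization (clause 5) produces the $\mu$-terminal at each level. This descent mirrors precisely the $\mq^{(\taupr)}$-sequence of Lemma \ref{simqchilem} and terminates in a principal chain $\gavec \in M_\bevec$ with base exactly $\taubar$. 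The strict inequality in (\ref{supportcond}) when $\db(M_\bevec) \ne 0$ guarantees $\eta > \taubar$ so that a further extension is actually produced, whereas the weak inequality suffices when $\db(M_\bevec) = 0$ because then $\devec$ itself (or its $\mu$-completion obtained from clause 5) already serves as $\gavec$.

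The index pair coincidence $(r,s)^\prime = (i,j)^\prime$ is preserved automatically throughout this process, since every extension introduced by clauses 4--6 attaches $\ka$-indices that are strict $\tau$-predecessors sitting within a single row, whose starred predecessor is inherited unchanged from the $(i,j)^\prime$ position identified at the parameter $\eta$. The main technical obstacle I anticipate is the precise case analysis of (\ref{barop}) into its three subcases (according to whether $\eta_0 = 1$, whether $\etapr = \sup_{\si<\etapr}\thtt(\De+\si)$, or neither), and matching these against the precise sequence of applications of clauses 4--6 so that the spanning descent lands exactly at $\taubar$ without overshooting; here Lemma \ref{simqchilem} and the invariance properties of $\la$ under change of relativization (cf.\ Lemma 7.7 of \cite{W07a}) do the bookkeeping work.
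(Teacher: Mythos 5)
The central step of your argument --- that the spanning-closure clauses 4--6 applied to the witnessing chain $\devec$ ``terminate in a principal chain with base exactly $\taubar$'' --- is exactly the content that needs proof, and the mechanism you invoke does not deliver it. Clauses 4--6 of Definitions \ref{precldefi} and \ref{spanningdefi} implement the additive-normal-form/$\log$/$\la$/$\mu$ decomposition of indices, i.e.\ the characteristic-sequence and $\mq$-machinery of Lemma \ref{simqchilem}; the operator $\bardot$ of (\ref{barop}) is a different decomposition (dropping the last additive component of the \emph{argument} of $\tht$, or stepping back in the localization), and in general $\taubar$ simply does not occur among the bases generated by spanning closure from $\devec$. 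If it did, the separate clause 9 (``supports bases'') of Definition \ref{closuredefi} and the entire distinction between spanning and closed sets would be redundant. Your reading of (\ref{supportcond}) is also inverted: when $\param(\devec)=\eta>\taubar$, the $\mu$-completion of $\devec$ is a principal chain to base $\eta$, not to base $\taubar$, so it cannot ``serve as $\gavec$''; the case $\eta=\taubar$ is the easy one, settled by clause 5 of pre-closure.

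What the case $\eta>\taubar$ actually requires is a recursive use of closedness itself, which is why the paper argues by induction on the height of $M_\bevec$ using Lemma 5.9 of \cite{CWa}. In outline: if the maximal parameter $\si$ of $M_\bevec$ exceeds $\taubar$, then $\si\in(\taubar,\tau)$, pre-closure puts the principal chain $\devec$ to base $\si$ into $M$, Lemma 5.9 of \cite{CWa} locates $\sibar$ in $[\taubar,\si)$ --- so that condition (\ref{supportcond}) applies to $\devec$ in turn and the iterated $\bardot$-descent can neither stall nor jump below $\taubar$ --- and the induction hypothesis applied to the strictly smaller $M_\devec$ carries the descent down until $\taubar$ is reached exactly, preserving the index-pair datum at each stage. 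None of this recursion appears in your proposal. (A minor further point: your gloss on the minimality of $r=\parind(\devec)$ is off, since the minimality is subject to the conjunction of $\taucp{r,1}<\tau$ and $\taurstar<\taucp{r,1}$, not to the first condition alone.)
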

\begin{proof}
The claim follows from closedness by induction on the height of $M_\bevec$, using Lemma 5.9 of \cite{CWa}.
\qed \end{proof}

Recall the notation $\al^\Ez$ introduced in Section 2 of \cite{W07a} for the least epsilon number strictly greater than $\al$.
The following lemma provides a crucial estimation of the term parameters, see Definition 3.28 of \cite{W07a}, 
in closed sets of tracking chains.

\begin{lem}\label{closedparamlem}
Let $\bevec$ be a convex principal chain to base $\tau$, with associated chain $\tauvec$, and let $M=M_\bevec$ be closed above $\bevec$.
Then for all $\gavec\in M$ and all $(i,j)\in\dom(\gavec)-\dom(\bevec)$ such that either $r:=\parind_{M}(\gavec)=0$ or
$(i,j)\kglex(r,1)$ we have \[\Part(\gacp{i,j})\subseteq\maxparam(M)^\Ez.\] 
\end{lem}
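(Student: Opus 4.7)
The plan is to proceed by induction on the lexicographic position $(i,j) \in \dom(\gavec) - \dom(\bevec)$, tracking how each index is generated from $\bevec$ via the closure clauses 1--8 of Definitions \ref{precldefi}, \ref{spanningdefi}, and \ref{spanningabovedefi}. Note that $\bevec \sub \gavec$ by Lemma \ref{strongspanlem} since $\bevec$ is convex. The boundary case $(i,j) = (r,1)$ (when $r > 0$) is immediate: by Definition \ref{Mdbdefi}(\ref{parpart}), $\gacp{r,1} = \param(\gavec) \in \param(M)$, so $\gacp{r,1} \le \maxparam(M)$, and the inclusion $\Part(\gacp{r,1}) \subseteq \maxparam(M)^\Ez$ follows since the parameters of an epsilon number are bounded by it.

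The crucial preliminary is to bound $\Part(\tau) \subseteq \maxparam(M)^\Ez$, where $\tau$ is the base of $\bevec$. In the supported case $\taubar \in (\taupr, \tau)$ of Definition \ref{cldefi}, closedness of $M$ combined with Lemma \ref{closedbarlem} produces a principal chain in $M$ with base $\taubar$, so that $\taubar \le \maxparam(M)$. Iterating with further $\bardot$-descent (which terminates in finitely many steps by inequality \eqref{barlen}), every constituent of $\Part(\tau)$ either appears as the base of a principal chain in $M$ or is computed from one by length-decreasing operations. When instead $\taubar \le \taupr$ (non-supported case), the parameters of $\tau$ reduce to those of $\taupr$, and we descend recursively within $\bevec$'s associated chain $\tauvec$, whose structural content is already fixed by the data of $\bevec$.

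For the main induction on $(i,j) \klex (r,1)$, I would split on whether $\gacp{i,j}$ is a $\ka$-index ($j = 1$) or a $\nu$-index ($j > 1$). The $\nu$-indices arise by clauses 2, 3, 5 as additive decompositions or $\mu$-images of the base of vector $i$, so their parameters are bounded by those of the inductively handled base using estimates \eqref{barlen} and \eqref{lalen}. The $\ka$-indices $\gacp{i,1}$ for $\lh(\bevec) < i < r$ are, by definition of $r$, either $\tau$-multiples or satisfy $\taucp{i,1} \le \tauistar$; they are introduced by clause 6 as $\la$-, $\log$-, or $\rho$-images of the terminal base of the previously constructed vector, and their parameters are bounded by combining the preliminary bound on $\Part(\tau)$ with the inductive hypothesis on lex-earlier indices. \textbf{The main obstacle} is to verify that the iterated applications of the operators $\la$, $\log$, $\rho$ through clause 6 do not introduce fresh epsilon parameters exceeding $\maxparam(M)$. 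Closedness of $M$ (as opposed to mere spanningness) is precisely what makes this work: it guarantees that every level of $\bardot$-descent is matched by an actual parameter in $M$, yielding the uniform bound $\maxparam(M)$ without gaps.
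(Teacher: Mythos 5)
There is a genuine gap in the organization of your induction. You propose to induct on the lexicographic position $(i,j)$ and to justify the inductive step by ``tracking how each index is generated from $\bevec$ via the closure clauses 1--8''. But $M$ is only required to be \emph{closed under} those clauses, not \emph{generated by} them from $\bevec$: an arbitrary $\gavec\in M$ (for instance one of the chains one starts from before forming a closure, as in the application of this lemma in Theorem \ref{pitctheo}) can carry indices that no clause produces, and the parameters of $\gacp{i,j}$ are not controlled by the parameters of lex-earlier indices of $\gavec$. What closedness actually provides is the converse direction: the clauses applied to $\gavec$ and its initial chains put into $M$ chains witnessing the \emph{term decomposition} of $\gacp{i,j}$ (its additive components, its $\log$- and $\la$-images, and, via the support condition and Lemma \ref{closedbarlem}, its $\bardot$-descent), so that the descent bottoms out in $\param(M)$. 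Accordingly, the induction should run not over positions but over the term length $\lsivec(\gacp{i,j})$, viewing the indices as terms of the relativized system $\Tsivec$ with $\sivec\supseteq\cs(\bevec_{\restriction_{l,k_l-1}})$, and it needs exactly two reduction facts: for epsilon numbers $\ga$ one has $\Part(\ga)\subseteq\Part(\gabar)\cup\Part(\laga)$ modulo $\om$-exponentiation (so the measure drops by \eqref{barlen} and \eqref{lalen}), and for $\ga=\tht^\si(\eta)$ not an epsilon number one has $\Part(\ga)=\Part(\eta)$. Your preliminary paragraph on $\Part(\tau)$ contains the right ingredients for this, but they must drive the induction for \emph{every} index, not only for the base $\tau$.

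A second, smaller error: your boundary case asserts $\gacp{r,1}=\param(\gavec)$. By Definition \ref{Mdbdefi} the parameter is $\taucp{r,1}$, the entry of the \emph{associated chain} at $(r,1)$, i.e.\ the unit of the index $\gacp{r,1}$, not the index itself; hence $\Part(\gacp{r,1})\subseteq\maxparam(M)^\Ez$ is not immediate from $\taucp{r,1}\le\maxparam(M)$ but requires the same term-length argument as the other positions.
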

\begin{proof} In the notation of part \ref{principalchainpart} of Definition \ref{Mdbdefi} we have $\tau=\taucp{l,k_l-1}$; thus the setting of 
relativization of $M$ is given by $\sivec_0:=\cs(\bevec_{\restriction_{l,k_l-1}})\in\RS$. The indices $\gacp{i,j}$ can therefore
be considered as elements of $\Tsivec$, where $\sivec_0\subseteq\sivec$ is according to nestings of $\iota$-operators involved in
the application of $\mu$- and $\la$-operators, see Definition \ref{Ttauvec}.
The lemma now follows by induction on $\lsivec(\gacp{i,j})$ for the appropriate extension $\sivec$ of $\sivec_0$, since for epsilon numbers
$\ga$ the parameters $\Part(\ga)$ are contained in $\Part(\gabar)$ and (possibly modulo $\omega$-exponentiation) $\Part(\laga)$, 
and for ordinals of a form $\ga=\tht^\si(\eta)>\tau$ where $\eta<\Om_1$, 
i.e.\ which are not epsilon numbers, we have $\Part(\ga)=\Part(\eta)$, cf.\ equation \ref{logred}.
\qed \end{proof}

\begin{defi}[Closure]\label{closuredefi}\index{closed!closure}
Let $\alvec\in\TC\cup\{()\}$ and $M=M_\alvec\finsub\TC$ be a set of tracking chains as in Definition \ref{Malgsdefi}.
We define the \emph{closure of $M$ above $\alvec$}, denoted as $\Mcl$, to be the least set of tracking chains that contains $M$ and is closed under
clauses {1 -- 8}, relaxed by the condition that in the case $\alvec\not=()$ the respective resulting tracking chains $\bevec$ 
satisfy $\ov(\alvec)\lo\ov(\bevec)$, cf.\ Definitions \ref{precldefi} and \ref{spanningdefi} and Definition \ref{spanningabovedefi}, 
and that
\begin{enumerate}
\item[9.] \emph{supports bases:} if $\bevec$ is a principal chain in $M$ to base $\tau$ such that $\taubar\in(\taupr,\tau)$
then $\bevec^\frown(\taubar)\in M$, unless condition \ref{supportcond} of Definition \ref{cldefi} holds anyway.
\end{enumerate} 
\end{defi}
\begin{rmk} Notice that the above clause for base support makes a choice in the support of bases. The process of closure
is finite since application of the operator $\bardot$ strictly lowers the $\operatorname{l}$-measure, see inequality \ref{barlen}
following Definition \ref{Ttauvec}. 
\end{rmk}
\begin{defi}[Essential closedness]\label{essclsddefi}\index{closed!essentially closed} Let $M\finsub\TC$ be spanning (spanning above $\alvec$). 
M is \emph{essentially closed (above $\alvec$)} 
if and only if the closure $\bar{M}$ of $M$ under initial chains ($\bevec$ such that $\ov(\alvec)\lo\ov(\bevec)$) 
is closed (closed above $\alvec$)
and only adds tracking chains of a form $\bevec^\frown(\ga_1)$ where $\bevec^\frown(\ga_1,\ga_2)\in M$ for some $\ga_2$ such as 
$\mu_{\sumend(\ga_1)}$.
\end{defi}
\begin{rmk} Essentially closed sets remain to be closed under $\me$, $\lh$, and $\lh_2$ in the sense of Corollaries 
\ref{lhclscor} and \ref{lhtwoclscor}.
\end{rmk}

The following definition of essential closure of a given set $M$ of tracking chains allows us to omit redundant chains.
Such chains do not belong to the original set $M$, end in a $\ka$-index, and have 1-step extensions in $\Mcl$, but only by $\nu$-indices. 

\begin{defi}[Essential closure]\label{esscldefi}\index{closed!essential closure}
Let $\alvec\in\TC\cup\{()\}$ and $M=M_\alvec\finsub\TC$ be a set of tracking chains as in Definition \ref{Malgsdefi}. 
The \emph{essential closure of $M$ above $\alvec$}, denoted as $\Mecl$, is obtained from $\Mcl$ by dropping all tracking chains 
$\gavec\in\Mcl-M$ that are of a form $\bevec^\frown(\ga_1)$ where $\bevec^\frown(\ga_1,\ga_2)\in \Mcl$ for some $\ga_2$ and
for which there does not exist any proper extension of a form $\gavec^\frown\gavecpr\in\Mcl$.
\end{defi}

We are now prepared to introduce the notions of $\ka$-index and base minimization. These provide the key tools in the algorithm
given by Theorem \ref{isomtheo} that assigns isominimal realizations to given respecting forests of order $2$ by determining 
minimal (relativized) $\leo$- and $\letwo$-components, respectively, that satisfy a given forest.

\begin{defi}[$\kavec$-index minimization]\label{kappaindexdefi}\index{$\kavec$-index minimization} 
Let $\alvec$ be either the empty sequence or a convex tracking chain, where
$\alvec_i=(\alcp{i,1},\ldots,\alcp{i,m_i})$, $1\le i\le n$, $n\ge 0$. Let $M=M_\alvec\finsub\TC$ be a set of proper extensions of $\alvec$ 
of the form $M=\{\bevec\}\cup M_\bevec$ for a convex tracking chain $\bevec$ with associated chain $\tauvec$
such that $M_\bevec$ is either empty or closed above $\bevec$ and either
\begin{enumerate}
\item $\bevec=\alvec^\frown(\becp{n+1,1})$, where we set $\tau:=\taucp{n+1,1}$, or
\item $\bevec=\alvec^\frown(\becp{n+1,1},\mu_\tau)$ where $\tau:=\taucp{n+1,1}$ or 
\item $\bevec$ extends $\alvec$ by the $\nu$-index $\becp{n,m_n+1}=\mu_\tau$ where $\tau:=\taucp{n,m_n}$, $n>0$. 
\end{enumerate}
Set $\xi:=\gs(M_\bevec)$ and suppose $\si$ to be either the base of a $\ktwo$-predecessor $\ga$ of $\be:=\ov(\bevec)$, $\gavec:=\tc(\ga)$, 
or $\si=1$ and $\ga=0$, $\gavec:=()$,
such that all $\ktwo$-predecessors $\de\le\be$ of ordinals in $\ov[M_\bevec]$ satisfy $\de\le\ga$.
We call $\gavec$ the \emph{chain of the preserved $\ktwo$-predecessor} and $\si$ its \emph{base}.
Note that $\si$ and $\gavec$ determine each other and that 
according to the assumptions $\be$ does not have any $\ktwo$-successor in $\ov[M_\bevec]$.
Setting $\eta:=0$ in the case $\alvec=()\andsp M_\bevec=\emptyset$, and $\eta:=\si\cdot\om^\xi$ otherwise,
if $\bevec$ is of the form 1 we then have $\si\mid\becp{n+1,1}$ and, moreover, $\eta\le\becp{n+1,1}<\rho_n$ since $\si\le\taunestar$, and
if $\bevec$ is of the form 2 or 3 we have $\xi<\tau$ and hence $\eta<\tau<\rho_n$.
\\[2mm]
We define the \emph{$\ka$-index minimization above $\si$ in $M$ at $\bevec$}, denoted as $\bfkaarg{M,\bevec,\si}$, or equivalently
the \emph{$\ka$-index minimization in $M$ at $\bevec$ preserving $\gavec$}, denoted as $\bfkaarg{M,\bevec,\gavec}$, and $\kavec$ in short, as follows.
\[\kavec(\bevec):=\alvec^\frown(\eta)\quad\mbox{ and }\quad\kavecind:=\eta,\]
and for $\devec\in M_\bevec$ we define $\kavec(\devec)$ by considering the following cases.\index{$\kavec(\alvec)$, $\kavecind$}
\\[2mm]
{\bf Case 1:} $\bevec=\alvec^\frown(\becp{n+1,1})$ and $\xi=\taucp{n+1,1}\in\Ez^{>\taunestar}$.
Then we only change the index $\becp{n+1,1}$ at $(n+1,1)$ in $\devec$ to $\taucp{n+1,1}$ in order to obtain $\kavec(\devec)$,
which we call a \emph{horizontal translation}.\index{horizontal translation} 
\\[2mm]
{\bf Case 2:} Otherwise. Then we have $\xi<\tau$.
\\[2mm]
{\bf Subcase 2.1:} $\xi=\eta$ and $\bevec^\frown(\xi)\sub\devec$.
Then we define for $\devec=\bevec^\frown(\xi)^\frown\devecpr$
\[\kavec(\devec):=\alvec^\frown(\eta,1)^\frown\devecpr,\] and for $\devec$ of a form 
$\bevec^\frown(\xi,\decp{n+2,2},\dots,\decp{n+2,k_{n+2}})^\frown\devecpr$ we define 
\[\kavec(\devec):=\alvec^\frown(\eta,1+\decp{n+2,2},\decp{n+2,3},\ldots,\decp{n+2,k_{n+2}})^\frown\devecpr.\]
{\bf Subcase 2.2:} Otherwise. Then we simply replace the initial sequence $\bevec$ of $\devec$ by 
$\alvec^\frown(\eta)$ in order to obtain $\kavec(\devec)$, i.e., writing $\devec$ in the form $\bevec^\frown\devecpr$ we 
define \[\kavec(\devec):=\alvec^\frown(\eta)^\frown\devecpr.\]
\end{defi}

\begin{theo}\label{kappaindextheo} Let $M,\alvec,\bevec$ and $\si,\gavec$ be as in the above definition as well as the shortcuts 
$\be,\ga$, and set $\al:=\ov(\alvec)$, $\be_\ka:=\ov(\kavec(\bevec))$, and $I:=I(\gavec)$. 
Then $\kavec[M]$ is a set of tracking chains and we have 
\begin{enumerate}
\item $\al\lo\ov\circ\kavec[M]$, 
\item $\predecs_2(\be_\ka)=\{\de\mid\de\letwo\ga\}$, if $\ga\not=0$, otherwise $\be_\ka$ is $\letwo$-minimal.
\item the images of $I\cup M$ and $I\cup \kavec[M]$ under $\ov$ are isomorphic substructures of $\Ctwo$, and 
\item $\kavec[M_\bevec]$ is closed above $\kavec(\bevec)$.
\item $\kavec[M]$ is closed above $\alvec$.
\end{enumerate}
\end{theo}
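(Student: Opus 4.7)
The plan is to verify the five claims sequentially, treating Case 1 (horizontal translation), Subcase 2.1, and Subcase 2.2 uniformly wherever possible. I will begin by checking that $\kavec[M] \subseteq \TC$ via the conditions of Definition 5.1 of \cite{CWc}: convexity of $\alvec$ leaves positions $\kglex (n,m_n)$ unchanged, and the new $\ka$-index $\eta$ at $(n+1,1)$ satisfies $\eta < \rho_n$ --- in form 1 because $\eta \le \becp{n+1,1}$, in forms 2 and 3 because $\eta < \tau < \rho_n$. Claim (1) then follows at once from Definition \ref{odef} since $\eta$ is an additive principal multiple of $\si \ge 1$, so $\al \lo \ov(\kavec(\bevec))$, and the $\lo$-reach extends to the rest of $\kavec[M]$. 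For claim (2), by construction $\siti \mid \eta$, so Lemma \ref{letwosuclem} together with Proposition \ref{letwopredprop} places exactly $\ga$ and its $\letwo$-predecessors below $\be_\ka$ when $\ga \ne 0$, while the maximality hypothesis on $\ga$ rules out further $\letwo$-predecessors.

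For claim (3) I will take $\Phi$ to be the identity on $\ov[I]$ and $\ov \circ \kavec \circ \ov^{-1}$ on $\ov[M]$, then verify preservation of $\le$, $\leo$ and $\letwo$ via Corollary \ref{elemreccharcor}. Preservation of $\le$ is immediate because $\ktc$-order agrees position-by-position once $\becp{n+1,1}$ and all components above it are translated uniformly. Preservation of $\leo$ follows from Proposition \ref{gbocharprop}: branching index pairs $\bp$ and $\chi$-values at positions $\klex (n+1,1)$ are unchanged, and segments of $\leo$-successors crossing $(n+1,1)$ are preserved because $\kavec$ merely rescales $\becp{n+1,1}$ to its $\si$-divisible minimum while leaving suffix structure intact. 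Preservation of $\letwo$ on $\ov[M]$ repeats the argument for claim (2), and $\letwo$-edges from $\ov[M]$ into $\ov[I]$ land precisely in $\{\de \mid \de \letwo \ga\}$.

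For claims (4) and (5) I will check each closure clause of Definitions \ref{precldefi}, \ref{spanningdefi}, \ref{spanningabovedefi} and \ref{closuredefi} directly. Clauses 1--8 transfer from closedness of $M_\bevec$ above $\bevec$ because $\kavec$ acts as the identity on the suffixes after position $(n+1,1)$ in Subcase 2.2 and the horizontal-translation case, and as a uniform one-level shift in Subcase 2.1; hence principal chains, $\me$-extensions, $\cml$-indices and distinguished bases above the minimized position are preserved. Clause 9 transfers because $\maxparam$ and $\db$ are determined by the unchanged suffix portion, and $\kavec(\bevec)$ introduces no new supported base above $\alvec$ beyond $\eta$, whose $\bar{\eta}$-support, if required, either lies in $\alvec$ or is furnished by the $\nu$-index $1$ at position $(n+1,2)$ arising in Subcase 2.1. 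The main obstacle throughout is Subcase 2.1, where $\eta = \xi$ forces $\si = 1$ and $\xi$ to be a fixed point of $\om^{(\cdot)}$, and the rewriting $\bevec^\frown(\xi)^\frown\devecpr \mapsto \alvec^\frown(\eta,1)^\frown\devecpr$ compresses two tracking-chain levels into one; here I will need to check that $1 \le \mu_\eta$ (trivial) and that no $\chi$-value along a branching path is perturbed, and to match the relativized components in Proposition \ref{gbocharprop} position-by-position across the re-indexing, the required arithmetical identities reducing to the $\om$-fixed-point property of $\xi$.
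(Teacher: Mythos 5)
Your proposal takes the same route as the paper, which disposes of this theorem with the single line that it ``directly follows from the definitions involved''; your clause-by-clause verification is precisely the intended (if unwritten) argument, and the points you isolate --- validity of the new $\ka$-index $\eta$ below $\rho_n$, divisibility by $\si$ for claim (2), invariance of branching index pairs and $\chi$-values for claim (3), and the level compression in Subcase 2.1 --- are the right ones to check. One assertion is false, though not load-bearing: the equation $\eta=\xi$, i.e.\ $\si\cdot\om^\xi=\xi$, does not force $\si=1$; it forces $\xi\in\Ez$ together with $\si<\xi$ (so that $\si\cdot\xi=\xi$), and Subcase 2.1 can perfectly well occur with a preserved base $\si>1$. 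Since the checks you then list for that subcase ($1\le\mu_\eta$, preservation of $\chi$-values along branching paths, matching the re-indexed positions in Proposition \ref{gbocharprop}) nowhere use $\si=1$, the slip is harmless, but the characterization of the subcase should be corrected.
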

\begin{proof} The theorem directly follows from the definitions involved.
\qed \end{proof}

We turn to base minimization in sets of tracking chains. This provides a tool to determine $\lepw$-minimal isomorphic copies
of sets of tracking chains.
Recall the notion of base transformation, see Section 5 of \cite{W07a} or in short Definition 2.15 of \cite{W07c}.
For convenience we set $\piarg{\tau,\tau}:=\id$.

\begin{defi}[Base minimization]\label{pitcdefi}\index{base minimization} 
Let $\alvec$ be either the empty sequence or a convex tracking chain, where
$\alvec_i=(\alcp{i,1},\ldots,\alcp{i,m_i})$, $1\le i\le n$, $n\ge 0$. Let $M=M_\alvec\finsub\TC$ be a set of proper extensions of $\alvec$ of 
a form $M=\{\bevec\}\cup M_\bevec$, where $\ov[M_\bevec]$ contains a $\ktwo$-successor of $\ov(\bevec)$, 
$M_\bevec$ is closed above $\bevec$, and $\bevec$ is a convex principal chain in $M$ to base $\tau$, consisting of the vectors
$\bevec_i=(\becp{i,1},\dots,\becp{i,k_i})$, $1\le i\le l$, such that either 
\begin{enumerate}
\item $\bevec=\alvec^\frown(\becp{n+1,1},\mu_\tau)$ with $\becp{n+1,1}=_\NF\bepr+\tau$ or 
\item $\bevec$ extends $\alvec\not=()$ by the $\nu$-index $\becp{n,m_n+1}=\mu_\tau$, $\bepr:=0$. 
\end{enumerate}
Set $\al:=\ov(\alvec)$, $\be:=\ov(\bevec)$, $\xi:=\gs(M_\bevec)\ge\tau$, $\si_0:=\db(M_\bevec)$, and $\si_1:=\max\{\maxparam(M_\bevec),\rho\}$, 
where $\rho$ is either the base of a $\ktwo$-predecessor $\de$ of $\be$,
setting $\devec:=\tc(\de)$, or $\rho=1$, setting $\de:=0$ and $\devec:=()$,
such that all greatest $\ktwo$-predecessors $\ga<\be$ of ordinals in $\ov[M_\bevec]$ satisfy $\ga\le\de$. 
We call $\devec$ the \emph{chain of the preserved $\ktwo$-predecessor} and $\rho$ its \emph{base}.
Note that $\rho$ and $\devec$ determine each other. 
\\[2mm]
We define the \emph{base minimization above $\rho$ in $M$ at $\bevec$}, $\bfpiarg{M,\bevec,\rho}$, or equivalently the
\emph{base minimization in $M$ at $\bevec$ preserving $\devec$}, $\bfpiarg{M,\bevec,\devec}$, as follows,
where we simply write $\pivec$, whenever the arguments $M,\bevec$, and $\rho$ or $\devec$ are understood from the context.
In order to define $\pivec(\gavec)$ for $\gavec\in \{\bevec\}\cup M_\bevec$ we consider the following cases. 
\\[2mm]
{\bf Case 1:} $\si_0\le\rho$ or otherwise $\piarginv{\si_0,\tau}(\la_{\si_0})<\xi$.
Let $\si\in\Ez\cap(\si_1,\tau]$ be minimal such that $\xi\le\pistinv(\la_\si)$.
Minimality of $\si$ then implies that $\xi=\pistinv(\la_\si)$, see Lemmata 5.8 and 8.2 of \cite{W07a}.
In the case $\si=\tau$ transformation to a smaller base is not possible, and if assumption 2 holds for $\bevec$ then we set $\pivec:=\id$. 
Otherwise define 
\[\pivec(\bevec):=\alvec^\frown(\si,\mu_\si),\] 
and for $\gavec\in M_\bevec$ and $r:=\parind_{M_\bevec}(\gavec)$
we either have $r=0$ or $r>l$ and define
\begin{equation}\label{pitceq}\pivec(\gavec)_{i,j}:=\left\{\begin{array}{ll}
            \pivec(\bevec)_{i,j}&\mbox{if } (i,j)\in\dom(\bevec)\\[2mm]
            \gacp{i,j}&\mbox{if } r>0\andsp(r,1)\klex(i,j)\\[2mm]
            \pist(\gacp{i,j})&\mbox{otherwise,}
            \end{array}\right.
\end{equation}
which in the case $\si=\tau\andsp\tau<\becp{n+1,1}$ performs a horizontal translation, cf.\ Definition \ref{kappaindexdefi}.
\\[2mm]
{\bf Case 2:} $\si:=\si_0>\rho$ and $\xi\le\pistinv(\la_\si)$.
\\[2mm]
{\bf Subcase 2.1:} $\tau\nmid\xi$. Then, due to the uniqueness of $\si=\db(M_\bevec)$, we have $\xi=\tau\cdot\nu+\si\in[\tau,\lat]$ 
for some $\nu>0$, which we write as $\nu=\la+k\minusp\chit(\la)$, where $\la\in\Lim\cup\{0\}$ and $k<\om$ such that
if $\chit(\la)=1\andsp\nu=\la$ then $k=1$.
We set $\eta:=\om^{\pist(\la)+k}$ and define
\[ \pivec(\bevec):=\alvec^\frown(\si,\eta).\] 
For $\gavec\in M_\bevec$ such that $\gavec\ktc\bevec^\frown(\xi)$ we define $\pivec(\gavec)$ as in (\ref{pitceq}) of Case 1.
For $\gavec\in M_\bevec$ such that $\bevec^\frown(\xi)\letc\gavec$, which we may write as 
$\gavec=\bevec^\frown(\xi,\gacp{l+1,2},\ldots,\gacp{l+1,k_{l+1}})^\frown\gavecpr$, we define
\[\pivec(\gavec):=\left\{\begin{array}{ll}
            \alvec^\frown(\si,\eta+1)^\frown\gavecpr&\mbox{if } k_{l+1}=1 \\[2mm]
            \alvec^\frown(\si,\eta+1+\gacp{l+1,2},\gacp{l+1,3},\ldots,\gacp{l+1,k_{l+1}})^\frown\gavecpr&\mbox{otherwise.}
            \end{array}\right.
\]
{\bf Subcase 2.2:} $\tau\mid\xi$. We then have $\xi=\tau\cdot\nu$ for some $\nu>0$ which we write as $\nu=\la+k\minusp\chit(\la)$
where $\la\in\Lim$ and $k<\om$. According to the definition of $\si$ we have $k\minusp\chit(\la)=0$.
Lemma \ref{dslem} shows that $\chit(\la)=0$ and hence $k=0$. According to our assumptions $\si$ has a 
unique occurrence in $\la$ and $\max(\Part(\la))=\si$, and we may apply $\pist$ to $\la$, simply leaving $\si$ unchanged, 
thus obtaining $\chis(\pist(\la))=1$. We set $\eta:=\om^{\pist(\la)}$, so that $\chis(\eta)=1$, and define
\[ \pivec(\bevec):=\alvec^\frown(\si,\eta).\] 
For $\gavec\in M_\bevec$ such that $\gavec\ktc\dc(M_\bevec)$ we define $\pivec(\gavec)$ again as in (\ref{pitceq}) of Case 1.
For $\gavec\in M_\bevec$ such that $\dc(M_\bevec)\letc\gavec$, which, setting 
$r:=\parind_{M_\bevec}(\max(M_\bevec))=\parind_{M_\bevec}(\gavec)$, we may write as 
$\gavec={\gavec_{\restriction_{r-1}}}^\frown(\gacp{r,1},\ldots,\gacp{r,k_r})^\frown\gavecpr$, we define
\[\pivec(\gavec):=\left\{\begin{array}{ll}
            \alvec^\frown(\si,\eta+1)^\frown\gavecpr&\mbox{if } k_r=1 \\[2mm]
            \alvec^\frown(\si,\eta+1+\gacp{r,2},\gacp{r,3},\ldots,\gacp{r,k_r})^\frown\gavecpr&\mbox{otherwise.}
            \end{array}\right.
\]
This concludes the definition of $\pivec=\bfpiarg{M,\bevec,\rho}=\bfpiarg{M,\bevec,\devec}$, and for convenience we introduce the notations 
\index{$\pivec(\alvec)$, $\pivecind$}
\[\pivecind:=\si\] 
and
\[\alvec^+_\pi:=\alvec^\frown(\si),\]
unless we have $\si=\tau$ in assumption 2 for $\bevec$, where we set $\alvec^+_\pi:=\pivec(\bevec)$.
\end{defi}

\begin{theo}\label{pitctheo} Let $M,\alvec,\bevec$, and $\rho,\devec$ be as in the above definition as well as the shortcuts
$\al,\be,\de,\si,\tau$, and set $I:=I(\devec)$ and $\be_\pi:=\ov(\pivec(\bevec))$. 
Then $\pivec[M]$ is a set of tracking chains, and we have 
\begin{enumerate}
\item $\al\lo\ov\circ\pivec[M]$,
\item $\predecs_2(\be_\pi)=\{\ga\mid\ga\letwo\de\}$ if $\de\not=0$, 
otherwise $\be_\pi$ is $\letwo$-minimal,
\item the images of $I\cup M$ and $I\cup \pivec[M]$ under $\ov$ are isomorphic substructures of $\Ctwo$, 
\item $\pivec[M_\bevec]$ is closed above $\pivec(\bevec)$, and
\item $\{\alvec^+_\pi\}\cup\pivec[M]$ is closed above $\alvec$, hence $\pivec[M]$ is essentially closed above $\alvec$.
\end{enumerate}
\end{theo}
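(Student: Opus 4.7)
The plan is to verify the five claims by case analysis matching the subcases of Definition \ref{pitcdefi}, exploiting throughout the basic properties of the base transformation $\pist$ from Section 5 of \cite{W07a}. First I would verify that $\pivec[M]$ consists of valid tracking chains by inspecting the conditions of Definition 5.1 of \cite{CWc} component-by-component. In Case 1 the translation essentially applies $\pist$ to all indices above $\bevec$, so preservation follows from $\pist$ being order-preserving and commuting appropriately with $\bardot$, $\mu$, and $\la$, together with the identity $\pist(\xi)=\la_\si$ enforced by the minimality of $\si$; the edge case $\si=\tau$ reduces either to the identity or to a horizontal translation covered by Theorem \ref{verttrantheo}. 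Claims (1)--(3) for Case 1 then combine these order-preserving properties with the characterizations in Proposition \ref{gbocharprop} and Lemma \ref{letwosuclem}, while Proposition \ref{relletwoprop} applied to $\pivec(\bevec)$ delivers the $\letwo$-predecessor characterization of $\be_\pi$ needed for claim (2).

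For Subcases 2.1 and 2.2 the crux is that the new terminal component $\eta$ of $\pivec(\bevec)$, together with its further extensions, encodes the $\letwo$-structure previously realized along the main line through $\bevec^\frown(\xi)$. In Subcase 2.1 the decomposition $\xi=\tau\cdot\nu+\si$ with $\nu=\la+k\minusp\chit(\la)$ is engineered so that $\eta=\om^{\pist(\la)+k}$ produces, under $\ov$, exactly the same $\ka$- and $\nu$-successor pattern above $\be_\pi$ that $\bevec^\frown(\xi)$ produces above $\be$; I would verify this correspondence directly from Definition \ref{kappanuprincipals} together with Lemma \ref{simqchilem}. Subcase 2.2 is analogous, using Lemma \ref{dslem} to force $\chit(\la)=0$ and the unique occurrence of $\si$ in $\la$ to give $\chis(\pist(\la))=1$, which explains why $\eta+1$ rather than $\eta$ heads the subsequent extensions.

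Closedness of $\pivec[M_\bevec]$ above $\pivec(\bevec)$ for claim (4) then follows because each closure clause of Definitions \ref{precldefi}, \ref{spanningdefi}, \ref{spanningabovedefi}, and \ref{closuredefi} commutes with the action of $\pivec$ on $M_\bevec$, using Lemma \ref{closedparamlem} to control parameter images: $\bardot$, $\maxparam$, and $\param$ are all transported covariantly by $\pist$, so supported bases remain supported. Claim (5) follows because at $\bevec$ the base $\tau$ becomes supported by $\alvec^+_\pi=\alvec^\frown(\si)$, with $\si<\tau$ and $\si\ge\taubar$ by the minimality choice, while all other principal chains beyond $\bevec$ inherit their support from $\pivec[M_\bevec]$. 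The main obstacle I expect is the verification of claim (3) in Subcases 2.1 and 2.2 at the transition between the $\ktc$-maximal chain ending with $\xi$ (respectively $\dc(M_\bevec)$) and its further extensions: one must show that the shift $\gacp{l+1,2}\mapsto 1+\gacp{l+1,2}$ (respectively $\gacp{r,2}\mapsto 1+\gacp{r,2}$) both preserves tracking chain status and respects $\ov$-values, which requires a careful computation with the multiplicative normal forms from Definitions \ref{odef} and \ref{kappanuprincipals} and the specific choice of $\eta$ as a principal power of $\om$.
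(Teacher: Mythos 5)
Your overall strategy coincides with the paper's: control of parameters via Lemma \ref{closedparamlem}, commutativity of $\pist$ with $\bardot$, $\ze$, $\la$, $\mu$, $\chi$, $\varrho$, $\tht$, the arithmetic operations and hence with $\me$, followed by inspection of the closure clauses. However, there is a genuine gap at the single most delicate point of the argument, and you have located the ``main obstacle'' in the wrong place. Before any question of $\ov$-values or of the shift $\gacp{l+1,2}\mapsto 1+\gacp{l+1,2}$ arises, one must show that $\alvec^\frown(\si,\eta)$ is a tracking chain at all, i.e.\ that the manufactured $\nu$-index $\eta$ satisfies $\eta\le\mu_\si$ — and in fact the strict inequality $\eta<\mu_\si$, without which $\pivec(\bevec)$ would be a principal chain to base $\si$ and $\be_\pi$ would acquire a $\letwo$-reach that $\be$ does not have, destroying claim (3). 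Your phrase that $\eta$ ``is engineered so that'' the successor pattern matches assumes exactly what has to be proved.

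The paper's verification of this is not routine: in Subcase 2.1 one computes $\rhoargs{\si}{\eta}=\si\cdot(\pist(\la)+k\minusp\chis(\pist(\la)))=\pist(\tau\cdot\nu)$ (legitimate only because Lemma \ref{closedparamlem} keeps the parameters of $\la$ below $\si$), deduces $\rhoargs{\si}{\eta}+\si\le\lasi$ by continuity in the base, and then excludes $\eta\ge\mu_\si$ via the weak monotonicity of $\rhoargs{\si}{}$ and part a) of Lemma 3.12 of \cite{CWc}; in Subcase 2.2 one instead gets $\rhoargs{\si}{\eta}=\pist(\xi)\le\lasi$, hence $\rhoargs{\si}{\eta}+\ze_\si\le\lasi$, and rules out $\eta=\mu_\si$ precisely because $\chis(\eta)=1$. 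None of this appears in your plan, and it cannot be subsumed under ``inspecting the conditions of Definition 5.1 of \cite{CWc} component-by-component,'' since it requires the quantitative interplay between $\varrho^\si$, $\la^\si$ and $\mu_\si$ rather than a syntactic check. Supplying this computation would close the gap; the remaining items you list (isomorphism of the images, transport of closedness, the terminal translation of indices) are then indeed the routine inspections you describe.
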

\begin{proof} Due to Lemma \ref{closedparamlem}, all terms to which the order preserving base transformation $\pist$ is applied, 
use parameters below $\si$ 
(with the unique exception handled explicitly in Subcase 2.2) and can be translated into $\Tt$, see Section 6 of \cite{W07a}, 
invariantly regarding localization (Lemma 6.5 of \cite{W07a}), the operator $\bardot$ and fine-localization (Lemma 5.7 of \cite{CWa}), 
the operators $\ze,\la,\mu$ (Lemmata 6.8 and 7.7 of \cite{W07a} and Lemma 3.6 of \cite{CWc}), hence also regarding tracking sequences. 
We have verified commutativity of $\pist$ with $\ze,\la,\mu$ (Lemmata 5.6, 7.10 of \cite{W07a} and Lemma 3.7 of \cite{CWc}), 
with $\bardot$ (Lemma 5.7 of \cite{CWa}), and also with the indicator $\chi$ and the operator $\varrho$ (Lemmata 3.2 and 3.11 of \cite{CWc}). 
For $\chi$ and $\varrho$, however, we need full commutativity with $\pist$ with respect to
the base argument as well, i.e. \[\chiga(\eta)=\chi^{\pist(\ga)}(\pist(\eta)) \quad\mbox{ and }\quad 
\pist(\varrho^\ga_\eta)=\varrho^{\pist(\ga)}_{\pist(\eta)}\]
for suitable arguments $\ga$ and $\eta$.  
For $\chi$ this property obviously holds; hence it also follows for $\varrho$.
Inspecting the translation mapping we also observe that
\[\pist(\tht^\ga(\eta))=\tht^{\pist(\ga)}(\pist(\eta))\]
for suitable arguments $\ga$ and $\eta$. Commutativity of $\pist$ with addition, multiplication, $\om$-exponentiation, and $\log$ is obvious.
Therefore $\pist$ also commutes with maximal (1-step) extensions ($\me$), see Definition 5.2 of \cite{CWc}. 

In Case 2.1 we have $\rhoargs{\si}{\eta}+\si\le\lasi$ as a consequence of Case 2 and continuity in $\si$, since due to 
Lemma \ref{closedparamlem} $\rhoargs{\si}{\eta}=\si\cdot(\pist(\la)+k\minusp\chis(\pist(\la)))=\pist(\tau\cdot\nu)$. 
The weak monotonicity of $\rhoargs{\si}{}$ now implies that $\eta<\mu_\si$, since otherwise 
$\rhoargs{\si}{\eta}+\si\ge\rhoargs{\si}{\mu_\si}+\si>\lasi$ using part a) of Lemma 3.12 of \cite{CWc}.
 
In Case 2.2 we have $\rhoargs{\si}{\eta}=\si\cdot\pist(\la)=\pist(\xi)\le\lasi$ as a consequence of Case 2 and continuity with
respect to the single occurrence of the parameter $\si$ in $\la$ and hence in $\xi$. This entails
$\rhoargs{\si}{\eta}+\zeta_\si\le\lasi$ as $\ze_\si<\si$.
We thus obtain $\eta<\mu_\si$, since equality is ruled out by $\chis(\eta)=1$, and the assumption $\eta>\mu_\si$ would
imply $\rhoargs{\si}{\eta}+\zeta_\si>\rhoargs{\si}{\mu_\si}+\zeta_\si=\lasi$.

Close inspection of the definition of $\pivec$ now shows that $\pivec[M]\subseteq\TC$ and that $I\cup \pivec[M]$ is isomorphic to $I\cup M$. 
Finally, closedness of $\pivec[M_\bevec]$ above $\pivec(\bevec)$ is seen by inspection of Definitions 
\ref{precldefi}, \ref{spanningdefi}, Definition \ref{cldefi}, and closedness of $\{\alvec^+_\pi\}\cup\pivec[M]$ above $\alvec$
follows from the choice of $\pivec(\bevec)$.
\qed \end{proof}

\section{Isominimal realization}\index{isominimal realization}

\begin{defi}\label{alcovdefi}
Let $P\not=\emptyset$ be finite such that $P_a:=\{b_1,\ldots,b_r,a\}\stackrel{\cdot}{\cup} P$\index{$P_a$} is a respecting forest of order $2$
over the language $(0;\le,\leo,\letwo)$, where the constant $0$ does not need to be interpreted,
$r\ge 0$,
\[b_1\ktwo\ldots\ktwo b_r\ktwo a < P, \quad\mbox{and}\]
\[a\lo\max(P) \quad\mbox{ if }a>0.\]
Suppose that $\al=0$ if $a=0$ and otherwise $\al<1^\infty$ such that $\predecs_2(\al)=\{\be_1,\ldots,\be_r\}$ for ordinals $\be_1<\ldots<\be_r$.
\begin{enumerate}
\item A mapping $c_\al:P_a\to \Ctwo$ is called an \emph{$\al$-covering of $P_a$}\index{$\al$-covering of $P_a$, $c_\al$} if and only if
$c_\al(b_i)=\be_i$ for $i=1,\ldots,r$, $c_\al(a)=\al$, and $\Image(c_\al)$ is a cover of $P_a$ in $\Ctwo$.
\item An $\al$-covering $c_\al$ of $P_a$ is called an \emph{$\al$-isomorphism of $P_a$}\index{$\al$-isomorphism of $P_a$} 
if $\Image(c_\al)$ is isomorphic to $P_a$. 
\item $c_\al$ is called an \emph{isominimal realization of $P_a$ above $\al$}\index{isominimal realization!isominimal realization of $P_a$ above $\al$} if and only if it is an $\al$-covering that is $\lepw$-minimal
among all $\al$-coverings of $P_a$.
\item An $\al$-covering $c_\al$ is called \emph{convex}\index{convex!convex $\al$-covering} if $\tc(\be)$ is convex for all $\be\in\Image(c_\al)$.
\end{enumerate}

\noindent Let $\alvec\in\TC\cup\{()\}$ and $M_\alvec$ be (essentially) closed above $\alvec$.
Setting $\al:=\ov(\alvec)$, $\predecs_2(\al)=:\{\be_1,\ldots,\be_r\}$, we define the \emph{(respecting) forest associated with $M_\alvec$}
\index{respecting forest!(respecting) forest associated with $M_\alvec$}
to be $P_a$, where $P:=\ov[M_\alvec]$, $a:=\al$, $b_i:=\be_i$  for $i=1,\ldots,r$, and $P_a:=\{b_1,\ldots,b_r,a\}\cup P$.
\end{defi}
\begin{rmk} For any respecting forest $P_a$ of order $2$, as in the above definition, there exists a convex $\al$-covering: 
we may simply choose the proof theoretic ordinal of a theory $\mathrm{ID}_N$ for a suitable
index $N<\om$ (setting $\mathrm{ID}_0:=\mathrm{PA}$), which provides a sufficiently long $\ktwo$-chain to cover $P_a$.
\end{rmk}
\begin{theo}\label{isomtheo}
Let $P_a$ be a respecting forest of order $2$ as in the above definition, with a given convex $\al$-covering $c_\al$, and set
$\alvec:=\tc(\al)$ if $\al>0$, and $\alvec:=()$ if $\al=0$. 
There exists 
a unique $\al$-isomorphism $i_\al$ of $P_a$ such that 
\begin{enumerate}
\item $i_\al[P]$ is closed under $\lh,\lh_2$ and 
\item $\tc\circ i_\al[P]$ is essentially closed above $\alvec$.
\end{enumerate}
\end{theo}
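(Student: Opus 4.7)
The plan is induction on $|P|$. The base case $P=\emptyset$ is immediate: $i_\al$ sends $a\mapsto\al$ and $b_i\mapsto\be_i$, both closure conditions holding vacuously. For the inductive step, let $p$ be the $\leo$-maximal element of $P$ (equivalently, one of the $\leo$-maximal elements, as the respecting forest is linearly ordered by $\le_0$ at the top level) and set $P':=P\setminus\{p\}$, with restricted covering $c_\al{\restriction}P'_a$. This restriction is again a convex $\al$-covering, so the inductive hypothesis furnishes a unique $\al$-isomorphism $i'_\al$ of $P'_a$ satisfying both closure conditions; it remains to locate $i_\al(p)$ and verify that extending $i'_\al$ by $p\mapsto i_\al(p)$ yields $i_\al$.

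Since $P_a$ is a respecting forest with $\letwo\subseteq\leo$, the $\leo$-predecessors of $p$ form a $\leo$-chain containing its $\letwo$-predecessors. Let $q$ denote the greatest $\leo$-predecessor of $p$ in $P_a$ (taking $q:=a$ if none lies in $P$), with image $\de:=i'_\al(q)$ and tracking chain $\devec$, and let $\devec_2$ be the tracking chain of the image of the greatest $\letwo$-predecessor of $p$ in $P_a$, if any. The forest type at $p$ then dictates the structural shape of the extension of $\devec$ needed: a principal chain extension if $p$ heads a new $\letwo$-component above $\de$, a $\ka$-index extension otherwise. We apply the corresponding minimization to the tracking chain $\tc(c_\al(p))$, namely base minimization (Theorem \ref{pitctheo}) in the principal case and $\ka$-index minimization (Theorem \ref{kappaindextheo}) in the $\ka$-index case, with the preserved $\letwo$-predecessor being $\devec_2$ (or empty if no $\letwo$-predecessor exists). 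By parts (1)--(3) of these theorems the resulting map remains an $\al$-isomorphism of $P_a$; by parts (4)--(5), and after applying the essential-closure operation of Definitions \ref{closuredefi} and \ref{esscldefi}, $\tc\circ i_\al[P]$ is essentially closed above $\alvec$. Closure of $i_\al[P]$ under $\lh$ and $\lh_2$ then follows from Corollaries \ref{lhclscor} and \ref{lhtwoclscor}, using that convexity is preserved throughout the minimization (both minimizations introduce fresh $\nu$-indices only in the form $\mu_\tau$).

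For uniqueness, suppose $i_\al$ and $j_\al$ both satisfy conditions (1) and (2). Argue by induction along $\letc$ on tracking chains of images: let $\bevec$ be the $\letc$-minimal chain at which they differ. Essential closedness of both images above $\alvec$ forces agreement on all initial chains of $\bevec$, so $\bevec$ corresponds to the placement of some $p\in P$ under both maps, and the $\leo$- and $\letwo$-predecessors of $i_\al(p),j_\al(p)$ among previously placed images agree. The forest type of $p$ then selects a unique minimal extending $\ka$-index or base (uniqueness being guaranteed by parts (4)--(5) of Theorems \ref{kappaindextheo} and \ref{pitctheo} combined with the essential-closedness condition of clause 9 of Definition \ref{closuredefi}), contradicting the discrepancy.

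The main obstacle is the bookkeeping in the inductive step: one must correctly read off from the forest $P_a$ which minimization operation applies at $p$ and which $\letwo$-predecessor is to be preserved, and verify that the iterated minimizations commute with the intermediate closure operations (clauses 1--9) so that no spurious ordinals are introduced outside $i_\al[P]$ except those of the form permitted by Definition \ref{esscldefi}. A related subtlety is confirming that the $\leo$-maximality of $p$ in $P$ ensures the minimization is applied to an ``outermost'' position of the tracking chain structure, so that the already-placed image $i'_\al[P']$ is not disturbed; this relies on the respecting property of the forest together with the characterization of $\leo$ in Proposition \ref{gbocharprop} and the compatibility of $\sub$ with $\letc$ from Lemma \ref{subresplem}.
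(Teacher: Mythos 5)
Your overall strategy (induction on $|P|$, reduction to the minimization operators $\kavec$ and $\pivec$) points in the right direction, but the decomposition you chose for the inductive step does not work. You remove a $\leo$-maximal element $p$, apply the induction hypothesis to $P'=P\setminus\{p\}$, and then try to \emph{extend} the resulting realization $i'_\al$ by placing $p$. This fails because the isominimal (essentially closed) realization is not monotone under adding elements at the top of the $\leo$-forest: the minimal admissible image of an element depends on everything sitting $\leo$-above it. Concretely, in Definition \ref{kappaindexdefi} the minimized $\ka$-index is $\eta=\si\cdot\om^\xi$ with $\xi:=\gs(M_\bevec)$, so the image of the root of a $\leo$-component is computed \emph{from} the already-realized interior of that component; similarly for base minimization, where $\si$ is determined by $\xi$, $\db(M_\bevec)$, and $\maxparam(M_\bevec)$. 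A minimal counterexample: for $P=\{p_1\}$ with $a=0$ the unique essentially closed realization sends $p_1\mapsto 1$; but $1$ has trivial $\leo$-reach, so if $P=\{p_1,p_2\}$ with $p_1\lo p_2$, the extension step has nowhere to put $p_2$ above $i'_\al(p_1)=1$. The restriction of the $\al$-isomorphism of $P_a$ to $P'_a$ is simply not the $\al$-isomorphism of $P'_a$, so your induction hypothesis does not deliver the object you need.

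The paper's proof decomposes differently: it partitions $P$ into the $a$-$\leo$-connectivity components $P_1,\ldots,P_{k+1}$, and for each component $Q$ with root $q$ it first applies the induction hypothesis to the part of $Q$ strictly $\lo$-above $q$ (relative to $\be=c_\al(q)$), obtaining a set $M_\bevec$ closed above $\bevec$; only then is $\ka$-index minimization or base minimization applied, and it is applied to the \emph{whole set} $M=\{\bevec\}\cup M_\bevec$ at once, not to the single chain $\tc(c_\al(p))$ as in your sketch. Theorems \ref{kappaindextheo} and \ref{pitctheo} are stated for such sets precisely because the transformation must move the root and all of its successors coherently. Finally the component roots are assembled by summing the minimized $\ka$-indices $\xi_{q_1},\ldots,\xi_{q_{k+1}}$. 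If you want to salvage a one-element-at-a-time induction, you would have to re-prove after each step that the entire realization below the new element can be re-minimized consistently, which amounts to redoing the paper's component-wise argument anyway.
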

\begin{proof} We argue by induction on the cardinality of $P$. 
Note that property 1 follows from property 2 by Corollaries \ref{lhclscor}, \ref{lhtwoclscor}, and Lemma \ref{strongspanlem}.
Let $\alvec_i=(\alcp{i,1},\ldots,\alcp{i,m_i})$, $1\le i\le n$, $n\ge0$, be the components of $\alvec$. 
Let $P=\bigcup_{i=1}^{k+1} P_i$ be the partitioning of $P$ into increasing $a$-$\leo$-connectivity components.
Let $Q$ be any of the $P_i$ and set $q:=\min(Q)$, i.e., in the case $a=0$ the element $q$ is the $i$-th $\leo$-minimal element in $P$,
and otherwise $q$ is the $i$-th immediate $\lo$-successor of $a$ in $P$. 
Then the restriction of $c_\al$ to $Q_a$ remains to be a convex $\al$-covering, 
and we may assume that $\be:=c_\al(q)$ does not have any $\ktwo$-predecessor in 
$(\al,\be)$, since otherwise we would obtain another convex $\al$-covering of $P_a$ by simply replacing $\be$ by such a $\ktwo$-predecessor.
The convexity of $\alvec$ furthermore implies that $\alvec\subseteq\bevec:=\tc(\be)$ where $\bevec_i=(\becp{i,1},\ldots,\becp{i,k_i})$,
$1\le i\le l$, and the convexity of $c_\al$ implies that $\bevec$ is convex. 
Let $\tauvec$ be the associated chain. In the case where $k_l=1$ we have $l>n$ and $l^\star\klex(n,m_n)$ due to the 
$\al$-$\letwo$-minimality of $\be$, and $q$ does not have any $\ktwo$-successor in $P$. If $k_l>1$ then $\bevec$ is a principal chain, 
and due to the $\al$-$\letwo$-minimality of $\be$ we either have $(l,k_l)=(n,m_n+1)$ and $\becp{l,k_l}=\mu_\taucp{n,m_n}$, or $l>n$, 
$k_l=2$, $\becp{l,2}=\mu_\taucp{l,1}$, and $l^\star\klex(n,m_n)$. 
In the cases where $l>n$ we may assume that $l=n+1$ due to Theorem \ref{verttrantheo}.  

Now, if necessary, the i.h.\ is applied to $P_q$, defined as the substructure of $P_a$ given by the union of the subset of 
$\{b_1,\ldots,b_r,a,q\}$ matching the $\letwo$-predecessors 
of $\be$ with the set of elements of $P$ that are $\lo$-successors of $q$, and the appropriate restriction of $c_\al$. 
We thus obtain (in the non-trivial case) a $\be$-isomorphism $i_\be$ and define $M_\bevec$ to be the closure of 
$\tc\circ i_\be[Q^{>q}]$ under initial chains,
so that $M_\bevec$ is either empty or closed above $\bevec$, cf.\ Definition \ref{essclsddefi}. 
Setting for convenience $b_{r+1}:=a$ and $\be_{r+1}:=\al$, let $\si$ be the base of $\be_i$ where $b_i$ is the greatest 
$\ktwo$-predecessor of $q$ in $P_a$ if such exists and $\si:=1$ otherwise. We now define the set $M:=\{\bevec\}\cup M_\bevec$ of proper
extensions of $\alvec$ and consider the following two cases.
\\[2mm]
{\bf Case 1:} $q$ does not have any $\ktwo$-successor in $P$. Here we may apply $\ka$-index minimization above $\si$ in $M$ at $\bevec$, 
see Definition \ref{kappaindexdefi} and Theorem \ref{kappaindextheo}, and set $M_q:=\kavec[M]$, $\bevec_q:=\kavec(\bevec)$,
$\be_q:=\ov(\bevec_q)$, and $\xi_q:=\kavecind$. 
\\[2mm]
{\bf Case 2:} Otherwise, base minimization above $\si$ in $M$ at $\bevec$ applies, 
see Definition \ref{pitcdefi} and Theorem \ref{pitctheo}, and we set $M_q:=\{\alvec^+_\pi\}\cup\pivec[M]$, $\bevec_q:=\pivec(\bevec)$,
$\be_q:=\ov(\bevec_q)$, and $\xi_q:=\pivecind$.
\\[2mm]
Now $M_q$ is closed above $\alvec$, and using straightforward translation we can define the mapping $i_\al$ on $P$. 
We have $\ka$-indices $\xi_{q_i}$ for $i=1,\ldots,k+1$ where $q_i=\min(P_i)$ and define $\xi_i:=\sum_{j=1}^i\xi_{q_j}$ for $i=1,\ldots,k+1$. 
Changing the $\ka$-index $\xi_{q_i}$ to $\xi_i$ at $(n+1,1)$ in every chain in $M_{q_i}$ for each $i$ where it applies (i.e.\ there
is no change in the case where $\be_{q_i}$ extends $\alvec$ directly by a $\nu$-index), we obtain the image of $i_\al$ after
omitting superfluous chains ending in $\ka$-indices that do not match elements in $P_a$ from the modified $M_{q_i}$. 
The image of $P$ under $\tc\circ i_\al$ is therefore essentially closed above $\alvec$, as desired.
\qed \end{proof}

\begin{theo}\label{closedminitheo} 
Let $\alvec\in\TC\cup\{()\}$ and $M_\alvec$ be closed above $\alvec$ with associated forest $P_a$.
Then the identity is the unique isominimal realization of $P_a$ above $\al$.
\end{theo}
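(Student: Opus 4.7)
The plan is to reduce this theorem to the uniqueness part of Theorem~\ref{isomtheo} combined with the pointwise minimality built into the $\ka$-index and base minimization constructions of Definitions~\ref{kappaindexdefi} and~\ref{pitcdefi}.

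First I would verify that the identity on $P_a$ is a convex $\al$-isomorphism satisfying the hypotheses of Theorem~\ref{isomtheo}. By the definition of the forest associated with $M_\alvec$ in Definition~\ref{alcovdefi}, $P_a=\ov[M_\alvec]\cup\{\be_1,\ldots,\be_r,\al\}$ with its inherited $\le,\leo,\letwo$ relations, so the identity is trivially an $\al$-isomorphism, and $\tc\circ\id[P]=M_\alvec$ is closed above $\alvec$ by hypothesis, hence essentially closed. Closedness above the convex $\alvec$, together with the $\me$-type clauses 5 and 8 of Definitions~\ref{precldefi} and~\ref{spanningabovedefi}, ensures that every $\bevec\in M_\alvec$ is convex as well. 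Applying Theorem~\ref{isomtheo} with any convex $\al$-covering of $P_a$ (which exists by the remark following Definition~\ref{alcovdefi}) yields a unique $\al$-isomorphism $i_\al$ whose image has tracking chains essentially closed above $\alvec$; since the identity meets both of these conditions, uniqueness forces $i_\al=\id$.

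For isominimality, let $c_\al'$ be any $\al$-covering of $P_a$. I would rerun the inductive construction from the proof of Theorem~\ref{isomtheo} with $c_\al'$ as the starting covering (first convexifying by extending $c_\al'$ to a convex covering if necessary, which only increases values and so does not affect a lower bound argument). At every step of the induction, the $\ka$-index minimization via $\eta=\si\cdot\om^\xi$ in Definition~\ref{kappaindexdefi} and the base minimization via the least $\si$ with $\xi\le\pistinv(\la_\si)$ in Definition~\ref{pitcdefi} substitute the root of the current $\leo$-component by the pointwise least value compatible with the preserved $\ktwo$-predecessor and the inductively minimized subsequent components, so the outputs are pointwise $\le$ the corresponding values of $c_\al'$. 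Since by the uniqueness argument the resulting $\al$-isomorphism equals $\id$, we conclude $p=\id(p)\le c_\al'(p)$ for every $p\in P$, so $\id$ is the pointwise minimum among $\al$-coverings, and in particular the unique $\lepw$-minimal realization.

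The main obstacle is the preceding paragraph's claim of pointwise minimality at each minimization step: one must carefully justify that the defining formulas for $\eta$ and $\si$ truly yield the least ordinals consistent with the already-chosen subsequent components, and that this minimality survives the iteration across $\leo$-connectivity components, including the passage through Theorem~\ref{verttrantheo} and through the tracking-chain clauses of Definitions~\ref{precldefi} and~\ref{spanningdefi}. The crucial point is that the base-support condition~(\ref{supportcond}) of closedness is exactly what rules out any cheaper realization; any attempted $c_\al'$ with $c_\al'(p)<p$ at some $p$ would ultimately have to violate that constraint in the local structure at $p$.
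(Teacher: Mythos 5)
Your reduction to Theorem~\ref{isomtheo} only delivers half of the statement. From the uniqueness clause of Theorem~\ref{isomtheo} you can indeed conclude that the identity is the unique $\al$-isomorphism of $P_a$ with essentially closed image (and the first paragraph of your argument for this is fine). But the theorem to be proved asserts $\lepw$-minimality among \emph{all} $\al$-coverings, and a covering is merely an embedding maintaining $\leo$ and $\letwo$: its image need not be isomorphic to $P_a$, need not be closed, and need not arise from the algorithm of Theorem~\ref{isomtheo} at all. The claim you flag as ``the main obstacle'' --- that each application of $\ka$-index minimization and base minimization yields values pointwise below those of an arbitrary covering $c_\al'$ --- is not established anywhere in the paper: Theorems~\ref{kappaindextheo} and~\ref{pitctheo} assert only that the minimized sets are isomorphic copies that are (essentially) closed above $\alvec$; they make no comparison with other coverings. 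That comparison is precisely the content of Theorem~\ref{closedminitheo}, so your proposal defers the entire difficulty rather than resolving it. The paper instead argues directly by induction on the cardinality of $M_\alvec$: it isolates the largest immediate $\lo$-successor $\bevec$ of $\alvec$ in $M_\alvec$, splits $M_\alvec=M_0\cup\{\bevec\}\cup M_\bevec$, normalizes an arbitrary covering $c_\al$ via Claims~\ref{mainlinecovclaim} and~\ref{extensionpartclaim} (so that $\tc(c_\al(\be))$ may be assumed of the form $\bevec[\nu]$ and the relevant image chains are extensions of it), and then, in each of the cases governed by $\cml(\bevec)$ and $\becp{i,j+1}$ versus $\mu_\tau$, shows that $\nu<\becp{i,j+1}$ would let one translate the covering into a covering of a smaller closed set contradicting the induction hypothesis. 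The support condition~(\ref{supportcond}) enters only in one subcase (Case~A), not as the single decisive constraint.

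A further directional error: to prove $\id\lepw c_\al'$ you propose to replace $c_\al'$ by a convex covering that is pointwise \emph{larger}. Establishing that the identity lies below the enlarged covering says nothing about the original $c_\al'$, so the convexification step cannot be dismissed as harmless; note that the paper's proof of Theorem~\ref{closedminitheo} works with an arbitrary, not necessarily convex, $\al$-covering for exactly this reason.
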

\begin{proof} We argue by induction on the cardinality of $M_\alvec$. Consider $\bevec\in M_\alvec$ such that $\be:=\ov(\bevec)$ is 
the largest immediate $\lo$-successor of $\al$ in $\ov[M_\alvec]$, and let $\tauvec$ be the chain associated with $\bevec$ if
$\alvec\sub\bevec$ and with $\alvec$ otherwise.
We obtain the partitioning 
\[M_\alvec=M_0\cup\{\bevec\}\cup M_\bevec,\]
where $M_0:=\{\devec\in M_\alvec\mid\devec\ktc\bevec\}$, and observe that $\alvec\sub\devec$ for all $\devec\in M_0$, as is seen from
Proposition \ref{gbocharprop} and Remark \ref{intermedextsteprmk}, that 
$M_0$ is closed above $\alvec$ unless $M_0=\emptyset$, that $M_\bevec$ is closed above $\bevec$, and that
\begin{equation}\label{lhreacheq}
\lh(\be)=\max(\ov[M_\bevec]),
\end{equation}
and define $C$ to be the chain of tracking chains of consecutively greatest 
immediate $\lo$-successors from $\be$ to $\lh(\be)$ through $\ov[M_\be]$.  
Let $c_\al$ be an $\al$-covering of $P_a$ and set $\gavec:=\tc(\ga)$ where $\ga:=c_\al(\be)$.
For convenience we define 
\[\vcal:=\tc\circ c_\al\circ\ov.\]
{\bf Case 1:} $\bevec$ extends $\alvec$ by $\becp{n+1,1}=_\NF\eta+\taucp{n+1,1}$. If $\eta>0$, by closedness we either have
$\alvec^\frown(\eta)\in M_\alvec$, or $n>0$, $\eta=\taucp{n,m_n}\in\Ez^{>\taunpr}$, and the extension of $\alvec$ by the
$\nu$-index $\mu_\taucp{n,m_n}$ at $(n,m_n+1)$ is an element of $M_\alvec$. Then $M_0$ is non-empty and the i.h.\ applies
to $\alvec$ and $M_0$.
 
We now show that $c_\al$ is pointwise greater than or equal to the identity.
Without loss of generality we may assume that 
the restriction of $c_\al$ to $M_0$ is the identity, that $\ga=c_\al(\be)$ is $\al$-$\leo$-minimal, and that $\ga\le\be$.
In the case $\ga=\be$ we directly apply the i.h., otherwise we have $\gavec=\alvec^\frown(\eta+\xi)$
for some $\xi\in(0,\taucp{n+1,1})$ and set $\si:=\sumend(\xi)=\sumend(\gacp{n+1,1})$. 
Now straightforward translation from $\gavec$ to $\bevec$ leads to a contradiction with the i.h.\ for $\bevec$ and $M_\bevec$, since 
\[\log((1/\si^\star)\cdot\si)<\log((1/\taunestar)\cdot\taucp{n+1,1}),\] 
where $\si^\star$ is the $n+1$-th unit of $\gavec$ according to Definition 5.1 of \cite{CWc}, as $\ga$ and $\be$ have the same 
$<_2$-predecessors.
\\[2mm]
{\bf Case 2:} Otherwise. Then $\bevec$ either extends $\alvec$ by $\becp{n,m_n+1}$ (where $m_n\ge 1$), which by minimality 
of $\be$ and closedness satisfies $\becp{n,m_n+1}\in\Hz$, and in which case we set $(i,j):=(n,m_n)$, or we have $\alvec\not\sub\bevec$,
so that according to Proposition \ref{gbocharprop} and closedness $(i,j+1):=\bp(\alvec,\bevec)$ exists 
with $\bevec_{\restriction_{i,j+1}}=\bevec$, and with $\alvec=\alvec_{\restriction_{i,j+1}}$ in
the case $\chi^\taucp{i,j}(\taucp{i,j+1})=0$, while $\alvec=\me(\alvec_{\restriction_{i,j+1}})$ if $\chi^\taucp{i,j}(\taucp{i,j+1})=1$. 
We then observe that $\becp{i,j+1}=\alcp{i,j+1}+\rho$ for some $\rho\in\Hz$.
In both cases for $\bevec$ we have $\alvec\not=()$ and $\tau:=\taucp{i,j}\in\Ez^{>\taucppr{i,j}}$. 
If $\cml(\bevec)$ exists we set $(r,s):=\cml(\bevec)$, otherwise we let $(r,s):=(i,j)$.

As in Case 1, if the set $M_0$ is non-empty, we may apply the i.h.\ straightforwardly to see that the identity is the
unique isominimal realization of $M_0$ above $\al$. 
Note that the set $\{\bevec\}\cup M_\bevec$ is closed above $\alvec$, and thus it suffices to show the claim for this set. 
To this end, assume $c_\al$ to be an $\al$-covering of $\{\bevec\}\cup M_\bevec$. 
Without loss of generality we may assume that $\ga=c_\al(\be)$ is $\al$-$\letwo$-minimal and less than or equal to $\lh(\be)$.
\begin{claim}\label{mainlinecovclaim} We may assume that $\gavec$ is of the form 
$\bevec[\nu]$ for some $\nu\le\becp{i,j+1}$.
\end{claim}
\begin{firstclaimproof}
We consider the following two cases.
\\[2mm]
{\bf Case A:} $\becp{i,j+1}=\mu_\tau$ and $\cml(\bevec)$ does not exist. 
Then $M_\bevec$ consists of proper extensions of $\bevec$ only. Moreover, setting $\alvecpr:=\alvec_{\restriction_{i,j}}$ 
the set $\{\bevec\}\cup M_\bevec$ is closed above $\alvecpr$ and consists of proper extensions of $\alvecpr$ only.
We consider the case where $\gavec$ does not extend $\alvecpr$ in one step by a $\nu$-index $\nu\le\mu_\tau$.
Note that while $\alvec\ktc\gavec$, by Lemma \ref{subresplem} we have $\alvecpr\propersub\gavec$ since 
$\alpr:=\ov(\alvecpr)\le\al<\ga\le\lh(\be)$, which entails $c_\al(\lh(\be))\le\lh(\be)$.
The $\al$-$\le_2$-minimality of $\ga$ implies that $\ga$ is even $\alpr$-$\le_2$-minimal.
Let $c_\alpr$ be the appropriate restriction of $c_\al$ to become a $\alpr$-covering of $\{\bevec\}\cup M_\bevec$.

Writing $\gavec=(\gavec_1,\ldots,\gavec_l)$, where $\gavec_r=(\gacp{r,1},\ldots,\gacp{r,k_r})$ for $r=1,\ldots,l$, 
according to Proposition \ref{relletwoprop} and our assumptions we have $k_l=2$, $l^\star\klex(i,j)$, and 
$\si:=\sumend(\gacp{l,1})\in(\taupr,\tau)$.
Thus, $\cml(\gavec_{\restriction_{l,1}})$ does not exist, 
so that the tracking chains of image elements of $c_\al$ greater than $\ga$ are
extensions of $\gavec$, whence by Theorem \ref{verttrantheo} we may assume that $\gavec_{\restriction_{l,1}}$ 
is of the form $\alvecpr^\frown(\si)$.

If $\pistinv(\lasi)<\lat$, straightforward upward base transformation by $\pistinv$ and translation from 
$\gavec$ to $\bevec$ yields a contradiction with the i.h.\ for $\bevec$ and $M_\bevec$. 
Otherwise we have   
\[\taupr<\si\le\taubar\le\maxparam(M_\bevec)=:\rho<\tau\] 
by closedness. Let $\xivec\in M_\bevec$ be $\ktc$-minimal such that $\param_{M_\bevec}(\xivec)=\rho$, so that $M_\xivec$ is closed
above $\xivec$ and only consists of extensions of $\xivec$ as $\cml(\bevec)$ and hence also $\cml(\xivec)$ do not exist. 

If $\si<\rho$, we obtain a contradiction with the i.h.\ 
for $\{\devec\}\cup M_\devec$, which is closed above $\alvecpr$, where $\devec:=\alvecpr^\frown(\rho)$ and $M_\devec$ 
is the translation of $M_\xivec$ to $\devec$, since $\vcalpr[\{\xivec\}\cup M_\xivec]$ is an $\alpr$-covering
of $\{\devec\}\cup M_\devec$ contained in the $\si$-th component.

In the remaining case, where $\si=\taubar=\rho$, by closedness we must
have $\db(M_\bevec)=0$, and the same translation of $M_\xivec$ to $\devec=\alvecpr^\frown(\taubar)$
results in a set $M_\devec$ such that $\{\devec\}\cup M_\devec$ is closed above $\alvecpr$, for which
the appropriate restriction of $c_\alpr$ contradicts the i.h., 
since this covering does not exhaust the maximal branch of $M_\devec$. 
\\[2mm]
{\bf Case B:} Otherwise. If $\cml(\bevec)=(r,s)$ exists, we have $(r,s)\kglex(i,j)$, otherwise we must have
$\becp{i,j+1}<\mu_\tau$ and $(r,s)=(i,j)$. In either case, we then have $(r,s)\klex(i,j)$ if and only if $\becp{i,j+1}=\mu_\tau$, 
and due to closedness we have 
\[\devec:=\bevec_{\restriction_{r,s+1}}[\mu_\taucp{r,s}]\in M_\bevec\quad\mbox{ and }\quad\be<\ov(\devec)=:\de.\]
Setting $\alvecpr:=\alvec_{\restriction_{r,s}}$, $\alpr:=\ov(\alvecpr)$, and $\Mpr:=\{\zevec\in M_\bevec\mid\devec\letc\zevec\}$, 
we observe that $\Mpr$ is closed above $\alvecpr$ and that the restriction $c^\prime$ of $c_\al$ to $\alpr$ and $\Mpr$ is an $\alpr$-covering,
wherefore the i.h.\ applies to reveal that the image of $c^\prime$ is pointwise greater than or equal to the identity. 

Let $\ze$ be the least element in $C$ such that $\be\not\le_2\ze$.
Since $\ga=c_\al(\be)$ is $\al$-$\le_2$-minimal and $\ga\le\lh(\be)$, the assumption $\be<\ga$ implies that $\be\not<_2\ga$ 
and that $\ga\leo\lh(\be)=c_\al(\lh(\be))$. Under this assumption we may modify $c_\al$ to be the identity on 
$\{\be\}\cup\ov[M_\bevec]\cap\ze$, resulting in an $\al$-covering pointwise below $c_\al$.  
As the i.h.\ applies in the case $\ga=\be$, we may therefore assume that $\ga<\be$.
Since $\lh(\be)\le c_\al(\lh(\be))$ as shown above, we have $\ga\lo\lh(\be)$, and thus we may assume that $\ga$ is $\al$-$\le_2$-minimal
such that $\ga\lo\be$, concluding the proof of Claim \ref{mainlinecovclaim}. \qed \end{firstclaimproof}

\begin{claim}\label{extensionpartclaim} We may assume that the image $V:=\vcal[M_\sub]$ of the $\letc$-initial segment 
\[M_\sub:=\{\zevec\in \{\bevec\}\cup M_\bevec\mid\bevec_{\restriction_{r,s+1}}\sub\zevec\}\]
of $\{\bevec\}\cup M_\bevec$ consists of extensions of $\vcal(\bevec_{\restriction_{r,s+1}})$ only.
\end{claim}
\begin{secondclaimproof}
We set $\bevec^\#:=\bevec_{\restriction_{r,s+1}}$ and $\gavec^\#:=\vcal(\bevec^\#)$. Note that  
$\gavec^\#=\bevec^\#=\alvec_{\restriction_{r,s+1}}$ in the case $(r,s)\klex(i,j)$.
Let $V_1,V_2$ be the partitioning of $V$ into extensions of $\gavec^\#$ 
and tracking chains $\zevec$ such that $\gavec^\#\not\sub\zevec$, respectively,
and let $M_1,M_2$ be the corresponding preimages. Let us assume that $V_2\not=\emptyset$.
Due to Lemma \ref{subresplem} we have $V_1\ktc V_2$ and hence also $M_1\ktc M_2$. Note that there does not exist any $\le_2$-connection
from $M_1$ into $M_2$ as there does not exist any such connection from $V_1$ into $V_2$. We consider the decomposition of $\ov[M_2]$
into $\leo$-connectivity components, writing
\[M_2=\bigcup^q_{p=1}\{\xivec_p\}\cup M_{\xivec_p},\]
where $\bevec\ktc\xivec_1\ktc\ldots\ktc\xivec_q$. 
Then the ordinals $\xi_p:=\ov(\xivec_p)$, $p=1,\ldots,q$, are $\alpr$-$\le_2$-minimal, where $\alpr:=\ov(\alvecpr)$ 
and $\alvecpr:=\alvec_{\restriction_{r,s}}$.
Hence, by Proposition \ref{relletwoprop}, each $\xivec_p$ is of a form ${\zevec_p}^\frown(\xicp{p,k_p,1},\ldots,\xicp{p,k_p,l_{p,k_p}})$, 
where $l_{p,k_p}\le 2$ and $\sumend(\xicp{p,k_p,1})<\taucp{r,s}$. 
Clearly, $l_{p,k_p}=1$ for $p=2,\ldots,q$, and we may assume that also $l_{1,k_1}=1$, since the case $l_{1,k_1}=2$ is handled similarly,
as $\ov({\zevec_1}^\frown(\xicp{1,k_1,1}))$ then must be $\alpr$-$\le_2$-minimal as well.
Note that we have $\bevec^\#\sub\zevec_p$ and that each $\cml(\xivec_p)$ would have to satisfy $\cml(\xivec_p)\klex(r,s)$ and therefore 
does not exist for $p=1,\ldots,q$.
Hence, each $M_{\xivec_p}$ is closed above $\xivec_p$ and consists of extensions of $\xivec_p$ only. 

We may thus modify the restriction of $c_\al$ to $\ov[M_\sub]$ on $M_2$ by the appropriate translations of the components 
$\{\xivec_p\}\cup M_{\xivec_p}$ to successively append $\lo$-branches to the greatest common $\lo$-predecessor in $\ov[V_1]$ of 
the ordinals in $\ov[V_2]$, which is possible due to property \ref{baseseqeq} in Remark \ref{intermedextsteprmk}. 
This modification results in a covering that is pointwise less than or equal to $c_\al$, which
concludes the proof of Claim \ref{extensionpartclaim}. \qed \end{secondclaimproof}

\noindent{\bf Case 2.1:} $\becp{i,j+1}<\mu_\tau$. Then we are in the scenario of Case B above. 
\\[2mm]
\noindent{\bf Subcase 2.1.1:} $\cml(\bevec)$ does not exist. Then $M_\sub-\{\bevec\}$ is closed above $\bevec$, 
and the assumption $\gavec=\bevec[\nu]$ for some $\nu<\becp{i,j+1}$ leads to a contradiction with the i.h.\ by
straightforward translation of $V$, which according to Claim \ref{extensionpartclaim} consists of
extensions of $\gavec$ only, from $\ga$ up to $\be$. 
Thus the i.h.\ applies, and we are done. 
\\[2mm]
\noindent{\bf Subcase 2.1.2:} Otherwise. Then we have $\cml(\bevec)=(i,j)$; hence $\bevec^\#=\bevec$, 
$\chit(\rho)=1$ where $\rho:=\sumend(\becp{i,j+1})$, and 
setting $\xivec:=\me(\bevec)\in M_\sub$ we have $\be<_2\ov(\xivec)=:\xi$ and $\xi$ is the immediate predecessor of $\ze$ in $C$,
where $\ze$ is defined as in the above Case B. Note that $\bevec\not\sub\zevec:=\tc(\ze)$ and $\ga<_2 c_\al(\xi)<_1 c_\al(\lh(\be))$
in this situation. According to Claim \ref{extensionpartclaim}, $V$ consists of extensions of $\gavec$ only, containing $\tc(c_\al(\xi))$.
Now define $M_\sub^\prime$ to be the translation of $M_\sub$ from $\bevec$ to \[\bevecpr:=\bevec[\becp{i,j+1}+\rho\cdot\om]\] 
with the additional tracking chain $\tc(\lh_2(\bepr))$, where $\bepr:=\ov(\bevecpr)$. 
Then $M_\sub^\prime-\{\bevecpr\}$ is closed above $\bevecpr$ and contains less elements than $M_\bevec$, since $\{\bevec\}\cup M_\bevec$ 
is closed above $\alvec$. 
If $\gavec=\bevec[\nu]$ for some $\nu<\becp{i,j+1}$, $\nu=_\NF\nupr+\nu_0$, we must have $\chit(\nu_0)=1$,
since $c_\al(\xi)\lo c_\al(\lh(\be))$, where $\tc(c_\al(\lh(\be)))$ is not reacheable by extension of $\gavec$.
Let $V^\prime$ be the translation of $V$ from $\gavec$ to \[\gavecpr:=\bevec[\nupr+\nu_0\cdot\om]\]
with the additional tracking chain $\tc(\lh_2(\gapr))$, where $\gapr:=\ov(\gavecpr)$.
Translating $V^\prime$ from $\gavecpr$ to $\bevecpr$ then gives rise to a $\bepr$-covering of $\ov[M_\sub^\prime-\{\bevecpr\}]$
that contradicts the i.h.
\\[2mm]
\noindent{\bf Case 2.2:} $\becp{i,j+1}=\mu_\tau$. 
\\[2mm]
\noindent{\bf Subcase 2.2.1:} $\cml(\bevec)$ does not exist. Then we are in the situation of the above Case A, 
where $M_\bevec\sub M_\sub$, and the assumption $\gavec=\bevec[\nu]$ for some $\nu<\becp{i,j+1}$ leads to a contradiction with the i.h.\ by
straightforward translation of $V=\vcal[\{\bevec\}\cup M_\bevec]$, which according to Claim \ref{extensionpartclaim} consists of
extensions of $\gavec$ only, from $\ga$ up to $\be$. 
\\[2mm]
\noindent{\bf Subcase 2.2.2:} $\cml(\bevec)=(r,s)$ exists.
Here Claim \ref{extensionpartclaim} applies with $\bevec^\#\subsetneq\bevec$
since $(r,s)\klex(i,j)$, cf.\ the above Case B. Note that we therefore have $\gavec^\#=\bevec^\#=\alvec_{\restriction_{r,s+1}}$, 
and setting $\xivec:=\me(\bevec)=\me(\bevec^\#)$ we have $\be^\#:=\ov(\bevec^\#)<_2\ov(\xivec)=:\xi$, and $\xi$ is the immediate predecessor 
of $\ze$ in $C$, where $\ze$ is now defined to be the least element of $C$ such that $\be^\#\not\le_2\ze$.
Note that $\bevec^\#\not\sub\zevec:=\tc(\ze)$ and 
\[\be^\#<_2 c_\al(\xi)<_1 c_\al(\lh(\be)),\] 
showing that $c_\al(\xi)\in V$ while $\bevec^\#\not\sub \tc(c_\al(\lh(\be)))$. 
Now define 
\[\bevecpl:=\bevec^\#[\alcp{r,s+1}+\rho\cdot\om],\]
where $\rho:=\sumend(\alcp{r,s+1})\in\Hz$, and $\beplus:=\ov(\bevecpl)$.
Let $\bevecpr$ be the image of $\bevec$ under the translation resulting from replacement of the index at $(r,s+1)$
by $\alcp{r,s+1}+\rho\cdot\om$ and note that $\cml(\bevecpr)$ does not exist.
Let $M_\sub^\prime$ be the image of $M_\sub$ under the same translation, with the additional tracking chain $\tc(\lh_2(\beplus))$.
Let $\alvecpr$ be obtained from $\alvec$ in the same way and observe that $M_\sub^\prime$ is closed above $\alvecpr$ 
with $\ktc$-minimal element $\bevecpr$, containing less elements than $\{\bevec\}\cup M_\bevec$.
Assuming that $\gavec=\bevec[\nu]$ for some $\nu<\becp{i,j+1}$ such that $\al<\ga$, let $\gavecpr$ be the image of $\gavec$ under
the same index shift at $(r,s+1)$, and 
let $c_\alpr$, where $\alpr:=\ov(\alvecpr)$, result from $c_\al$ likewise, so that it maps the elements of 
$M_\sub^\prime$ to the corresponding translated image elements of $c_\al$ while fixing $\lh_2(\beplus)$.
Then setting $\xipr:=\ov(\xivecpr)$, where $\xivecpr$ results from translating $\xivec$, we have 
\[\beplus<_2 c_\alpr(\xipr)\lo\lh_2(\beplus),\] 
and $c_\alpr$ is an $\alpr$-covering of $M_\sub^\prime$ contradicting the i.h.
\\[2mm]
\noindent We therefore must have $\nu=\becp{i,j+1}$, whence the claim for $\alvec$ and $M_\alvec$ follows from the i.h.\ for $\bevec$
and $M_\bevec$.
\qed \end{proof}

\noindent\begin{rmk} Note that any covering of an essentially
closed set $M$ extends to a covering of its closure $\bar{M}$ under initial chains. Hence essentially closed sets are uniquely
isominimally realized by the identity.\end{rmk}

Theorems \ref{isomtheo} and \ref{closedminitheo} now readily combine to the following main result on isomorphic copies
of respecting forests of order $2$ in $\Ctwo$ that are unique in being pointwise minimal among all coverings.  
\begin{cor}\label{respforestcor}
Every respecting forest $P$ of order $2$ (and hence every pure pattern of order $2$) has a unique isominimal realization $i[P]$ in $\Ctwo$. 
$i[P]$ is isomorphic to $P$, essentially closed, and hence closed under $\lh$ and $\lh_2$.
\end{cor}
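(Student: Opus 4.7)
The plan is to combine Theorems \ref{isomtheo} and \ref{closedminitheo} for the case $\al=0$. Given a respecting forest $P$ of order $2$, set $a:=0$ and $\alvec:=()$, so there are no $\ktwo$-predecessors of $a$ to preserve. The remark following Definition \ref{alcovdefi} already supplies a convex $0$-covering $c_0$ of $P_a$: take the proof-theoretic ordinal of $\mathrm{ID}_N$ for sufficiently large $N<\om$, whose $\ktwo$-structure is rich enough to embed any finite respecting forest of order $2$.

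Feeding $c_0$ into Theorem \ref{isomtheo} produces a $0$-isomorphism $i_0$ of $P_a$ whose image $i[P]:=i_0[P]$ is isomorphic to $P$ and whose tracking-chain image $\tc\circ i_0[P]$ is essentially closed above $()$. By the remark following Definition \ref{essclsddefi}, equivalently by Corollaries \ref{lhclscor} and \ref{lhtwoclscor} together with Lemma \ref{strongspanlem}, the set $i[P]$ is then closed under $\lh$ and $\lh_2$. This yields every structural claim of the corollary apart from isominimality.

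For isominimality, let $c$ be any $0$-covering of $P_a$. The remark preceding the corollary ensures that $c$ extends to a covering of the initial-chain closure of $\tc\circ i_0[P]$, which by Definition \ref{essclsddefi} is closed above $()$. Theorem \ref{closedminitheo} then forces that extended covering to be pointwise above the identity, and restricting back to $P_a$ gives $i_0\lepw c$. Hence $i_0$ is pointwise minimal among all $0$-coverings, and any two such minima coincide, yielding uniqueness. The substantive work lives in Theorems \ref{isomtheo} and \ref{closedminitheo}; the only delicate point here is the reduction that lets an arbitrary $0$-covering of $P_a$ be compared with the identity on a closed set of tracking chains, which is exactly the role played by the extension-to-initial-chains remark.
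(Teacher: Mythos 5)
Your proposal is correct and follows exactly the route the paper intends: the paper gives no separate proof of this corollary, stating only that Theorems \ref{isomtheo} and \ref{closedminitheo} ``readily combine'' to it, and your instantiation at $\al=0$, $\alvec=()$ with the $\mathrm{ID}_N$ convex covering, plus the extension-to-initial-chains remark for the minimality comparison, is precisely that combination spelled out.
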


Isominimal realizations are therefore tight within $\Ctwo$ as there do not exist $\leo$- nor $\letwo$-connections to elements of $\Ctwo$
that extend beyond the respective largest connections in the realization.

\begin{cor}[Ordinal notations]\label{notationcor} 
Let $\al<\oneinf$ and $M:=\{\tc(\al)\}^{\operatorname{ecl}}$ be its essential closure.
Then the respecting forest $P$ associated with $M$ together with a marker for the element matching $\al$ provides 
a pattern notation for $\al$. This notation is of least cardinality possible.
\end{cor}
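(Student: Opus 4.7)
The plan is to verify the two parts of the corollary: first, that $(P,q)$, where $q$ marks the element of $P$ equal to $\al$, genuinely constitutes a pattern notation for $\al$; second, that $|P|$ is the least cardinality achievable by any pattern notation for $\al$.

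For validity, the set $M$ is essentially closed by construction (Definition \ref{esscldefi}) and contains $\tc(\al)$, so $\al \in \ov[M]$. The remark following Theorem \ref{closedminitheo} records that every essentially closed set is uniquely isominimally realized by the identity: any covering of an essentially closed $M'$ extends to one of its closure $\bar{M'}$ under initial chains (Definition \ref{essclsddefi}), and Theorem \ref{closedminitheo} then pins down the identity as the unique isominimal realization of the forest associated with $\bar{M'}$. Combined with the uniqueness clause of Corollary \ref{respforestcor}, this shows that the unique isominimal realization of $P$ is the identity on $\ov[M]$, in particular mapping $q$ to $\al$, so $(P,q)$ is a valid pattern notation.

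For minimality, let $(Q,q')$ be any pattern notation for $\al$, meaning the unique isominimal realization $j$ of $Q$ (Corollary \ref{respforestcor}) satisfies $j(q') = \al$. By that corollary $j[Q]$ is isomorphic to $Q$ and essentially closed, so $N := \tc \circ j[Q]$ is an essentially closed set of tracking chains with $\tc(\al) \in N$ and $|N| = |Q|$. The inequality $|P| \le |Q|$ therefore reduces to showing
\[\bigl|\{\tc(\al)\}^{\operatorname{ecl}}\bigr| \le |N|\]
for every essentially closed $N$ containing $\tc(\al)$.

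The main obstacle is this last inequality. The approach is to pass to closures under initial chains: by Definition \ref{essclsddefi} the set $\bar N$ is closed and contains $\tc(\al)$, whence $\{\tc(\al)\}^{\operatorname{cl}} \subseteq \bar N$ by the minimality intrinsic to the closure operation (generated by the finite list of rules 1--9 of Definitions \ref{precldefi}, \ref{spanningdefi}, \ref{spanningabovedefi}, and \ref{closuredefi}). One then builds an injection $\phi\colon \{\tc(\al)\}^{\operatorname{ecl}} \hookrightarrow N$: every chain $\bevec \in \{\tc(\al)\}^{\operatorname{ecl}} \cap N$ is sent to itself; every chain $\bevec \in \{\tc(\al)\}^{\operatorname{ecl}} \setminus N \subseteq \bar N - N$ is, by Definition \ref{essclsddefi}, of the redundant form $\gavec^\frown(\eta_1)$ with $\gavec^\frown(\eta_1,\eta_2) \in N$ for some $\eta_2$, and is sent to a canonically chosen such extension. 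Verifying injectivity is the delicate step: the non-redundancy of $\bevec$ in $\{\tc(\al)\}^{\operatorname{cl}}$ forces a specific $\ka$/$\nu$-shape at its tail, and this, together with the structural analysis of tracking chains developed in Section \ref{spcltcsec}, must be used to rule out both collisions among images of $\{\tc(\al)\}^{\operatorname{ecl}} \setminus N$ and collisions with chains already in $N$.
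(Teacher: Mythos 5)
Your validity argument is fine and matches the paper's (implicit) reasoning: $M=\{\tc(\al)\}^{\operatorname{ecl}}$ contains $\tc(\al)$, is essentially closed, and the remark following Theorem \ref{closedminitheo} together with Corollary \ref{respforestcor} gives that its associated forest is isominimally realized by the identity, so the marked element lands on $\al$. The gap is in the minimality half, at the step ``$\{\tc(\al)\}^{\operatorname{cl}}\subseteq \bar N$ by the minimality intrinsic to the closure operation.'' The closure of Definition \ref{closuredefi} is \emph{not} an intersection-of-all-closed-supersets operator, because clause 9 is non-monotone: it adds the specific chain $\bevec^\frown(\taubar)$ \emph{unless} condition (\ref{supportcond}) of Definition \ref{cldefi} already holds, and that condition only demands $\maxparam(M_\bevec)\ge\taubar$ (resp.\ $>\taubar$ when $\db(M_\bevec)\neq 0$). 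A closed set $\bar N$ containing $\tc(\al)$ may support the base $\tau$ of a principal chain $\bevec$ through some entirely different extension $\gavec\in \bar N_\bevec$ with $\param(\gavec)\ge\taubar$, in which case $\bevec^\frown(\taubar)\notin\bar N$ and the containment fails. Since your injection $\phi$ is built on the inclusion $\{\tc(\al)\}^{\operatorname{ecl}}\setminus N\subseteq \bar N - N$, the whole second half of the argument does not get off the ground.

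This is precisely the point the paper's own proof flags: performing a closure forces a \emph{choice} (a normal form) in how bases are supported, and the minimality claim is a cardinality statement, not a subset statement. What has to be shown is that the canonical choice of clause 9 --- appending the single chain $\bevec^\frown(\taubar)$ and then closing --- generates at least as few new tracking chains as any alternative way of satisfying (\ref{supportcond}); any other supporting chain $\gavec$ with $\param(\gavec)\ge\taubar$ triggers its own closure obligations of at least the same size (the $\bardot$-descent of Lemma \ref{closedbarlem} terminating in the same number of steps or more). So the repair is to replace the containment-plus-injection scheme by a direct comparison of the number of chains forced by each admissible choice at every base-support step; the injectivity analysis you defer at the end is then not the only delicate point --- the set you are injecting \emph{from} is not even contained in $\bar N$ to begin with.
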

\begin{proof} Let $Q$ be a respecting forest of order $2$, of which the unique isominimal realization $c[Q]$ within $\Ctwo$ contains $\al$.
$c[Q]$ is essentially closed.
Inspection of Definitions \ref{closuredefi} and \ref{esscldefi} shows that we (must) make a choice (choosing a normal form) when performing 
a closure, but in a way that adds as few new elements as possible. Hence $Q$ must have at least as many elements as $P$.
\qed \end{proof}

Together with the obvious, elementary recursive comparison relations, we therefore obtain an elementary recursive notation system
for the ordinal $\oneinf$.
 
\begin{cor}
The union of all isominimal realizations of respecting forests of order $2$ comprises the initial segment $\oneinf$ of the ordinals,
characterizing the core of $\Rtwo$.
\end{cor}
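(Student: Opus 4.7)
The plan is to prove the two inclusions $\bigcup\subseteq\oneinf$ and $\oneinf\subseteq\bigcup$ separately, and then argue that the resulting union coincides with the core of $\Rtwo$.

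For the first inclusion, I would let $i[P]$ be the isominimal realization of any respecting forest (equivalently, pure pattern) $P$ of order $2$. By the main result of \cite{CWc}, $P$ possesses a cover strictly below $\oneinf$. Since isominimal realizations are, by Corollary \ref{respforestcor}, pointwise minimal among all coverings, we obtain $i[P]\subseteq\oneinf$. Combining with the fact that $\Ctwo$ is the elementary recursive arithmetical characterization of $\Rtwo$ restricted to $\oneinf$ (Corollary \ref{elemreccharcor}), this gives $\bigcup i[P]\subseteq\oneinf$.

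For the converse inclusion, I would fix an arbitrary $\al<\oneinf$ and invoke Corollary \ref{notationcor}: the essential closure $M:=\{\tc(\al)\}^{\operatorname{ecl}}$ is essentially closed above $()$ and contains $\tc(\al)$. By Theorem \ref{closedminitheo} together with the remark following it, the identity is the unique isominimal realization above $0$ of the respecting forest $P$ associated with $M$. Hence $\al\in\ov[M]=i[P]$, and $\al$ lies in the isominimal realization of a respecting forest of order $2$, which, being isomorphic to a substructure of $\Ctwo\subseteq\Rtwo$, is itself a pure pattern of order $2$.

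For the clause about characterizing the core, I would note the equivalence of the two generating classes. By Carlson's result recalled in the introduction, every pure pattern of order $2$ is a respecting forest of order $2$; conversely, Corollary \ref{respforestcor} shows that every respecting forest is, via its unique isominimal realization inside $\Ctwo\subseteq\Rtwo$, (up to isomorphism) a pure pattern of order $2$. Consequently the union of isominimal realizations of respecting forests of order $2$ coincides with the union of isominimal realizations of pure patterns of order $2$, which is by definition the core of $\Rtwo$.

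The main obstacle I expect is not technical but conceptual: namely making precise that minimality of the essential closure (Corollary \ref{notationcor}) really does guarantee that the isominimal realization of the associated forest returns $\al$ itself, rather than some smaller cover. This hinges squarely on the rigidity given by Theorem \ref{closedminitheo}, which asserts that on (essentially) closed sets the only isominimal realization is the identity. Once this is in hand, the corollary follows by packaging the two inclusions and invoking the equivalence of pure patterns and respecting forests of order $2$.
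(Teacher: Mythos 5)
Your proof is correct and follows essentially the same route as the paper: both directions rest on Corollary \ref{respforestcor} (every respecting forest has its unique isominimal realization inside $\Ctwo$) and on Corollary \ref{notationcor}/Theorem \ref{closedminitheo} (every $\al<\oneinf$ occurs in the identity realization of the forest associated with the essential closure of $\{\tc(\al)\}$), combined with the identification of finite isomorphism types of $\Rtwo$ with respecting forests. The only nit is that the semantic coincidence of $\Ctwo$ with $\Rtwo\restriction_{\oneinf}$ (needed so that isominimal realizations within $\Ctwo$ and within $\Rtwo$ agree) is Theorem 7.4 of \cite{CWc} rather than Corollary \ref{elemreccharcor}, which only records the elementary recursive presentation.
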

\begin{proof} 
By Theorem 7.4 of \cite{CWc} we know that the arithmetical characterization $\Ctwo=(\oneinf;\le,\leo,\letwo)$ coincides with
the structure $\Rtwo\restriction_{\oneinf}$, where $\leo$ and $\letwo$ are defined as $\Sigma_1$- and $\Sigma_2$-elementary substructurehood,
respectively. $\Core(\Rtwo)$ is by definition the union of all isominimal copies of finite isomorphism types of $\Rtwo$.
Corollary \ref{respforestcor} shows that each respecting forest of order $2$ is a finite isomorphism type of $\Ctwo$, which by Theorem 7.4 
of \cite{CWc} is a finite isomorphism type of $\Rtwo$ with coinciding isominimal realizations, hence $\Core(\Rtwo)=\oneinf$. 
\qed \end{proof}

We finally come to a statement regarding the combinatorial strength of respecting forests of order $2$. Recall the enumeration 
function $\ka$ of the $\leo$-minimal ordinals in $\Ctwo$, cf.\ its extension from \cite{C99} for the segment $\epsn$ to $\oneinf$ in 
Definition 4.4 of \cite{CWc} and Section 4 of \cite{W}, which we reviewed in Subsection \ref{tsccsec}.

\begin{cor} Denote the notation for an ordinal $\ga<\oneinf$ given in Corollary \ref{notationcor} by $P(\ga)$.
Let $\al<\be<\oneinf$. Then there is no covering of $P(\ka_{\om^\be})$ into $P(\ka_{\om^\al})$.
Hence any infinite descending sequence of ordinals below $\oneinf$ produces an infinite bad sequence of respecting forests of order $2$
with respect to coverings.\qed
\end{cor}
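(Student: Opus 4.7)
The strategy is to bound the size of $P(\ka_{\om^\al})$ from above by $\lh(\ka_{\om^\al})$ and combine this with the uniqueness of isominimal realizations from Theorem~\ref{closedminitheo}. For the size bound, observe that $\tc(\ka_{\om^\al})=(\om^\al)$ is a convex, $\letwo$-minimal tracking chain. By Corollaries~\ref{lhclscor} and~\ref{lhtwoclscor} the essential closure $\{\tc(\ka_{\om^\al})\}^{\operatorname{ecl}}$ is closed under $\lh$ and $\lh_2$; by inspecting the closure clauses 1--9 I would verify that every added tracking chain $\bevec$ has $(\om^\al)$ as initial chain, and hence by Lemma~\ref{subresplem} and Corollary~\ref{elemreccharcor} satisfies $\ka_{\om^\al}\leo\ov(\bevec)\le\lh(\ka_{\om^\al})$. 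Since $\al<\be$ implies $\om^\al+1\le\om^\be$, and $\ka_{\om^\al+1}$ is the root of the $\leo$-component immediately following that of $\ka_{\om^\al}$, it follows that
\[\max(P(\ka_{\om^\al}))=\lh(\ka_{\om^\al})<\ka_{\om^\al+1}\le\ka_{\om^\be}.\]

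Next, suppose for contradiction that a covering $c\colon P(\ka_{\om^\be})\to P(\ka_{\om^\al})$ existed. Then the image $c[P(\ka_{\om^\be})]$ would be a cover in $\Ctwo$ of the essentially closed set $\{\tc(\ka_{\om^\be})\}^{\operatorname{ecl}}$. By the remark following Theorem~\ref{closedminitheo}, the identity is the unique isominimal realization of this set, so pointwise minimality of isominimal realizations forces $\ka_{\om^\be}\le c(\ka_{\om^\be})$. But the previous paragraph yields $c(\ka_{\om^\be})\le\lh(\ka_{\om^\al})<\ka_{\om^\be}$, a contradiction that establishes the first assertion.

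For the second assertion, given an infinite descending sequence $\ga_0>\ga_1>\cdots$ below $\oneinf$, set $P_i:=P(\ka_{\om^{\ga_i}})$. For any $i<j$ we have $\ga_j<\ga_i$, so the first assertion applied with $\al:=\ga_j$ and $\be:=\ga_i$ rules out any covering $P_i\to P_j$; hence $(P_i)_{i<\om}$ is a bad sequence of respecting forests of order~$2$ with respect to coverings.

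The main obstacle I anticipate is the bookkeeping in the first step: confirming clause by clause that no closure operation -- in particular the base-supporting clause~9 and the $\me$- and $\lh$-saturation steps -- produces a tracking chain whose evaluation leaves the interval $[\ka_{\om^\al},\lh(\ka_{\om^\al})]$. Convexity of $(\om^\al)$ together with $\letwo$-minimality of $\ka_{\om^\al}$ should preclude excursions into other $\letwo$-components, but making this precise requires direct reference to Definitions~\ref{precldefi}, \ref{spanningdefi}, \ref{spanningabovedefi}, and~\ref{closuredefi}, together with the characterization of $\leo$ in Proposition~\ref{gbocharprop}.
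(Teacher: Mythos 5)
Your argument is correct and is exactly the intended one: the paper states this corollary with no written proof precisely because it follows by combining the pointwise minimality of isominimal realizations (Theorem~\ref{closedminitheo}, Corollary~\ref{respforestcor}) with the observation that the realization of $P(\ka_{\om^\al})$ lies entirely within the $\leo$-component of $\ka_{\om^\al}$, hence below $\lh(\ka_{\om^\al})<\ka_{\om^\al+1}\le\ka_{\om^\be}$. Your bookkeeping worry in the last paragraph is easily discharged: since $\om^\al$ is additively principal, every chain in $\{(\om^\al)\}^{\operatorname{ecl}}$ has $(\om^\al)$ as initial chain, so no closure clause can escape the interval $[\ka_{\om^\al},\lh(\ka_{\om^\al})]$.
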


Together with Carlson's result that respecting forests of order $2$ are well-quasi-ordered with respect to coverings, 
see \cite{C16}, we obtain
the independence of this wqo-result of the theory $\kplnod$, since as seen above, the well-quasi orderedness would imply 
$\mathrm{TI}(\oneinf)$, i.e.\ transfinite induction up to $\oneinf$, i.e.\ the proof-theoretic ordinal of $\kplnod$ 
(equivalently $\Pi^1_1-\mathrm{CA}_0$).
On the other hand, we have seen by Theorem 7.4 of \cite{CWc} that $\mathrm{TI}(\oneinf)$ suffices to show that every finite substructure
of $\Rtwo$ has a covering contained in $\oneinf$.

\section{Conclusion}
The structure $\Ctwo$, which arithmetically characterizes the structure $\Rtwo$
of pure elementary patterns of resemblance of order $2$ up to $\oneinf$ as proven in \cite{CWc}, 
was shown to be elementary recursive in \cite{W}, which we reviewed in Section \ref{prelimsec}. 
Here we have established mutual elementary recursive order isomorphisms between classical ordinal notations and pattern notations,
showing that pattern notations based on pure $\Sigma_2$-elementarity characterize the proof theoretic ordinal $\oneinf$ of the fragment 
$\Pi^1_1$-$\mathrm{CA}_0$ of second order number theory, or equivalently, the set-theoretic system $\kplnod$, which axiomatizes 
limits of admissible universes (i.e.\ models of $\kpom$, Kripke-Platek set theory with infinity).  

We have seen that finite isomorphism types of $\Ctwo$, hence of $\Rtwo$, comprise (up to isomorphism) the 
class of respecting forests of order $2$, cf.\ \cite{C01} and \cite{C09}. 
We have shown that the union of isominimal realizations of respecting forests of order $2$ is indeed the core of $\Rtwo$ and 
is to equal the proof-theoretic ordinal of $\kplnod$.
As a corollary we have proven that the well-quasi orderedness of respecting forests with respect to coverings, 
which was shown by Carlson in \cite{C16}, implies (in a weak theory) transfinite induction up to the proof-theoretic ordinal $\oneinf$ of $\kplnod$.

We expect, as mentioned in \cite{W}, that the approaches taken here and in our treatment of the structure $\Ronepl$, 
see \cite{W07b} and \cite{W07c}, naturally extend to an analysis of the structure $\Rtwopl$ and possibly to structures 
of patterns of higher order.
A subject of ongoing work is to verify our claim that the core of $\Rtwopl$ matches the proof-theoretic strength of a limit of 
$\mathrm{KPI}$-models, which in turn axiomatize admissible limits of admissible universes.  

\section*{Acknowledgements}
I would like to express my gratitude to Professor Ulf Skoglund for encouragement and support of my research and
thank Dr.\ Steven D.\ Aird for editing the manuscript. 
I would like to acknowledge the Institute for Mathematical Sciences of the National University of Singapore
for its partial support of this work during the ``Interactions'' week of the workshop {\it Sets and Computations} in April 2015.

\printindex


\begin{thebibliography}{99}
\bibitem{C99} T.\ J.\ Carlson:
    \textit{Ordinal Arithmetic and $\Sigma_1$-Elementarity.}
    Archive for Mathematical Logic 38 (1999) 449-460.
\bibitem{C00} T.\ J.\ Carlson:
    \textit{Knowledge, Machines, and the Consistency of Reinhardt's Strong Mechanistic Thesis.}
    Annals of Pure and Applied Logic 105 (2000) 51-82. 
\bibitem{C01} T.\ J.\ Carlson:
    \textit{Elementary Patterns of Resemblance.}
    Annals of Pure and Applied Logic 108 (2001) 19-77.
\bibitem{C09} T.\ J.\ Carlson:
   \textit{Patterns of Resemblance of Order 2.}
    Annals of Pure and Applied Logic 158 (2009) 90-124.
\bibitem{C16} T.\ J.\ Carlson:
    \textit{Generalizing Kruskal's theorem to pairs of cohabitating trees.}
    Archive for Mathematical Logic 55 (2016) 37--48.
\bibitem{CWa} T.\ J.\ Carlson and G.\ Wilken:
    \textit{Normal Forms for Elementary Patterns.}
    The Journal of Symbolic Logic 77 (2012) 174-194.
\bibitem{CWc} T.\ J.\ Carlson and G.\ Wilken:
    \textit{Tracking Chains of $\Sigma_2$-Elementarity.}
    Annals of Pure and Applied Logic 163 (2012) 23-67.
\bibitem{F96} S.\ Feferman:    
    \textit{G\"odel's program for new axioms: Why, where, how and what?} 
    P.\ Hajek (ed.): Lecture Notes in Logic 6 (1996) 3-22.
\bibitem{K09} A.\ Kanamori:
    \textit{The Higher Infinite.}
     Second Edition. Springer, Berlin Heidelberg, 2009.
\bibitem{P09} W.\ Pohlers:
    \textit{Proof Theory. The First Step into Impredicativity.}
    Springer, Berlin 2009.
\bibitem{W07a} G.\ Wilken:
    \textit{Ordinal Arithmetic based on Skolem Hulling.}
    Annals of Pure and Applied Logic 145 (2007) 130-161.
\bibitem{W07b} G.\ Wilken:
    \textit{$\Sigma_1$-Elementarity and Skolem Hull Operators.}
    Annals of Pure and Applied Logic 145 (2007) 162-175.
\bibitem{W07c} G.\ Wilken:
    \textit{Assignment of Ordinals to Elementary Patterns of Resemblance.}
    The Journal of Symbolic Logic 72 (2007) 704-720.
\bibitem{W} G.\ Wilken:
    \textit{Tracking chains revisited.}
    S.-D.\ Friedman, D.\ Raghavan, Y.\ Yang (eds.): {\it Sets and Computations}, Lecture Notes Series Vol.\ 33, 
    Institute for Mathematical Sciences, National University of Singapore, World Scientific Publishing Company (2017) 183-220.
\bibitem{W3} G.\ Wilken:
    \textit{Pure $\Sigma_2$-Elementarity beyond the Core.}
    In preparation.    
\end{thebibliography}
\end{document}